\documentclass[11pt,onecolumn]{article}
\setlength{\topmargin}{-.6in}
\setlength{\textwidth}{6.5in}
\setlength{\evensidemargin}{0.0in}
\setlength{\oddsidemargin}{0.0in}
\setlength{\textheight}{9in}
\usepackage{amscd}
\usepackage{times}
\usepackage{amsmath}
\usepackage{amssymb}
\usepackage{xspace}
\usepackage{theorem}
\usepackage{graphicx}
\usepackage{ifpdf}
\usepackage{url,hyperref}
\usepackage{latexsym}
\usepackage{euscript}
\usepackage{xspace}
\usepackage[all]{xy}
\usepackage{color}
\usepackage{makeidx}
\usepackage{tikz}
\usepackage{enumitem}
\long\def\remove#1{}
\newtheorem{theorem}{Theorem}[section] 

\newtheorem{obs}[theorem]{Observation}
\newtheorem{corollary}[theorem]{Corollary}

\newtheorem{definition}[theorem]{Definition}
\newtheorem{proposition}[theorem]{Proposition}
\newenvironment{proof}{{\em Proof:}}{\hfill{\hfill\rule{2mm}{2mm}}}

\newcommand {\mm}[1] {\ifmmode{#1}\else{\mbox{\(#1\)}}\fi}



\newcommand{\img}{\mathrm img}
\newcommand{\supp}{\mathrm supp}
\newcommand{\coker} {\mathrm coker}

\newcommand{\rank}                {\mm {\rm rank}}

\newcommand{\cancel}[1]

\begin{document}

\title{Alternative to Morse-Novikov Theory  for a closed 1-form (I)}

\author{
Dan Burghelea  \thanks{
Department of Mathematics,
The Ohio State University, Columbus, OH 43210,USA.
Email: {\tt burghele@math.ohio-state.edu}}
}
\date{}

\date{}

\maketitle

\hskip 1.5in Dedicated to  the memory of Stefan Papadima
\vskip .2in
\begin{abstract}
This paper extends the Alternative to Morse-Novikov Theory we have proposed in \cite{B} from real-and angle-valued map to closed 1-forms.
For a topological closed 1-form $\omega$ on a compact ANR $X,$  a concept generalizing  closed differential 1-form on a compact manifold,
under  the mild hypothesis of tameness, a field $\kappa$ and a non-negative integer $r$ we propose  two configurations $\boldsymbol \delta^\omega_r: \mathbb R\to \mathbb Z_{\geq 0}$ and $\boldsymbol\gamma^\omega_r: \mathbb R_{>0}\to \mathbb Z_{\geq 0}$ which recover Novikov-Betti numbers 
and  the Novikov complex associated with a Morse closed 1-form with non degenerated zeros.
Precisely, the sum of the multiplicities of the points in the support of  $\boldsymbol\delta^\omega_r$ equals the $r-$th Novikov-Betti number  and that of the points in the support of  $\boldsymbol\gamma^\omega_r$  equals the rank of the boundary map in the Novikov complex. 
We formulate the basic properties of these configurations, the stability and the Poincare duality when $X$ is a $\kappa-$orientable closed topological manifold which in full generality will be proven in the second part of this work.
\end{abstract}

\maketitle 

\setcounter{tocdepth}{1}
\tableofcontents

\section{Introduction}
\label{intro}
In this paper \footnote { The present version of this this paper is prompted by the discovery of a mistake in the proof of Theorem 3.2 page 723 of the printed version  of this work in  EJM  2020 no 6.   
Fortunately all results but Theorem 3.2 as formulated remain true by essentially the same arguments with minor improvements once an additional requirement (  hypothesis  H, 
cf section 2 for definition), be added to the tameness  of the TC1-form.  This requirement  is conjecturally superfluous  and indeed superfluous  for all TC1-forms whose cohomology class  $\xi$ belongs to $img (H^1(M;\mathbb Z)\to H^1 (M;\mathbb R)$  and for many other cases of pairs $(M; \xi \in H^1(M;\mathbb R))$ as discussed in subsection 2.2 } 
we define the configurations $\delta^\omega$ and $\gamma^\omega$  for a tame topological closed one form $\omega$ on a compact ANR. They are analogues of the configurations $\delta^f_r$  and $\gamma^f_r,$  previously defined in \cite {BH} for a tame real- or angle-valued maps $f.$ 
As a consequence we extend our Alternative to Morse-Novikov Theory (cf. \cite{B}) from tame angle-valued maps to  tame topological closed 1-form on a compact ANR.The concept {\it topological closed 1-form}, abbreviated TC1-form,  is  a generalization of closed differential 1-form on a smooth manifold. As in the case of real- or angle-valued maps (which corresponds to the case of closed 1-form of degree of irrationality zero or  one) we will analyze the real-valued map $f^\omega:\tilde X\to \mathbb R,$ a lift of $\omega,$ defined on the total space of the associated principal $\Gamma= \mathbb Z^k-$covering $\pi:\tilde X\to X$ associated to the cohomology class $[\omega]\in H^1(X;\mathbb R)$       
with $k$ the {\it degree of irrationality} of $\omega.$ 

It might appear that this should be a routine  extension  of the case $k=1$ but this is not quite so because: 

1.  the map $f^\omega$ is never proper when degree of irrationality $k$  is greater than $1$, 
so the  homology vector spaces which are involved in, finite dimensional in the case $k\leq 1,$   might be infinite dimensional  in the case $k\geq 2,$

2.  the set of critical values of $f^\omega$ is not discrete when $k>1$
but the opposite, always dense if not empty;  
 the approach of Zig-Zag persistence based on graph representations,  cf. \cite {BD11}, is apparently not applicable. 
 
 However, the tameness of the lift $f^\omega$ of $\omega,$ ,see secytion \ref {S2} for definition) and the fact that the group $\Gamma-$ defined by the form $\omega$ appears as a subgroup of $\mathbb R$ 
make the approach described in 
sections 6 and 7 of \cite {B} \footnote {initiated in \cite {BH}}  applicable.  
Ultimately this  leads to the finite configuration $\boldsymbol \delta^\omega_r$ and $\boldsymbol \gamma^\omega_r$ of points in $\mathbb R$ and $\mathbb R_+$   derived however from the $\mathbb Z_{\geq 0}-$valued maps  $\boldsymbol \delta^{f^\omega}_r$ and $\boldsymbol \gamma^{f^\omega}_r$ which are not configurations. 

To prove our result we consider in section \ref{S3} an apparently new definition of Novikov-Betti numbers based on the lifts of $\omega$ and verify in section \ref{S6} that 
this definition is equivalent to the standard ones (cf. \cite {F} for definitions). 
We are unaware if it  already exists in literature.  

 For the configurations $\boldsymbol \delta_r^\omega$ and $\boldsymbol \gamma^\omega_r$ one can  prove a stability property and a Poincar\'e duality property similar to Theorems 1.3 and 1.5 in \cite {BH} 
and, in view of the stability property of the assignment 
$\omega \rightsquigarrow  \boldsymbol \gamma^\omega_r$  (cf. Theorem \ref{TS}), show that the configurations $\boldsymbol \delta^{\omega}_r$ can be actually defined for any  TC1-form, not necessary tame.  It also can 
be refined in the spirit  of \cite {B1}and \cite {B2} to an assignment with values $\kappa[\Gamma]-$modules but we are not interested in this aspect nor in its  implications at this time.

The main results about the configurations $\boldsymbol \delta^\omega_r$ and $\boldsymbol \gamma^\omega_r$ are stated in Theorems \ref{TT} 
Theorems \ref{TP} and \ref{TS}. In this paper (part I) only Theorem \ref{TT} is proven entirely, the other two  will be established in part II and part III of this work. 

To formulate them we note : 

For a fixed field $\kappa,$ $X$ a compact ANR,  $\xi\in H^1(X;\mathbb R),$ denote by  $\beta^N_r(X;\xi)$  the $r-$th Novikov-Betti number, $r=0,1,2,\cdots$ 
and by $\mathcal Z^1(X;\xi)$ the set of topological closed 1-form on $X$ in the cohomology class $\xi$ (cf. definition section \ref {S2} ) equipped with the compact open topology.  

For a closed TC-1 form $\omega$ denote by $CR(\omega):= \{ t= c'-c''\mid c',c"\in CR(f^\omega)\}$ with $CR(f^\omega)$ the set of critical values of $f^\omega$ a lift of $\omega.$ Clearly this set of real numbers is independent of the lift $f^\omega$. 

As described in Section 2 the cohomology class $\xi \in H^1(X;\mathbb R)$ defines  two $\kappa[\Gamma]-$modules, $H^{inv}(X;\xi)$ and $H^{dir}(X; \xi)$ and a $\kappa[\Gamma]-$linear map $ J^\xi _r : H^{inv}(X;\xi) \to H^{dir}(X; \xi)$ 
 conjecturally always  injective and verified to be injective  for any $\xi$  in the image of $H^1(X;\mathbb Q) \to H^1(X;\mathbb R),$ as shown in Section 2.

For a space $Y$ and closed subset $K\subset Y$ one denotes by $Conf_N(Y)$ the space of configurations of total cardinality $N$ equipped with the collision topology and by $Conf(Y\setminus K)$ the space of configuration of points in $Y\setminus K$ equipped with the {\it bottleneck topology}, all these topologies  described in sections \ref {S2} and \ref {S3} below.  In this paper $Y= \mathbb R$ and $K= (-\infty, 0].$
\newpage 

\begin {theorem} \label {TT}\ 

\begin{enumerate}
\item If  $\omega$ is a weakly tame topological  closed 1-form on $X$ (cf. section \ref{S2} for definition)  then 

$$\sum_{t\in \mathbb R} \boldsymbol \delta^\omega_r (t) = \beta^N_r(X;\xi(\omega))$$ where $\xi(\omega)=[\omega]$ denotes  the cohomology class determined by $\omega.$ 

The support of $\boldsymbol \delta_r^\omega$ and $\boldsymbol \gamma_r^\omega$ are real numbers  in $CR(\omega).$

\item If $X= M^n$ is a closed smooth manifold of dimension $n,$ $\omega$ is a  tame  closed differential  1-form  (cf definition in section 2) in the cohomology class $\xi$  with all zeros of Morse  type and $c_r(\omega)$ denotes the number of zeros of Morse index $r$ then 

$$c_r(\omega)= \sum_{t\in \mathbb R} \boldsymbol \delta^\omega_r (t) + 
\sum_{t\in \mathbb R_+} \boldsymbol \gamma^\omega_r (t) + 
\boldsymbol \gamma^\omega_{r-1} (t) 
+ \boxed { \lambda^\omega_{r-1}}$$  with  $\lambda_{r-1}^\omega=0$ if $J_{r-1}^{\xi}$ is injective. The definition of $\lambda^\omega_r$ is given in section \ref {S4} (\ref {EE11} ).    
\end{enumerate}
\end{theorem}

Item 1. explains in what sense   $\boldsymbol \delta^\omega_r$ refines  the Novikov-Betti numbers.

Since a chain complex of finite dimensional vector spaces is, up to a non canonical isomorphism, completely determined by the dimensions of its homology vector spaces  and the dimensions of its components Item 2. explains to what extent  the  configurations $\boldsymbol \delta^\omega_r$ and  $\boldsymbol \gamma^\omega_r$ supplemented by the numbers $\lambda^\omega_r$ if nonzero) provide together a refinement of the Novikov complex when considered over the Novikov field.  Note that the formula in Item 2. gives the rank of $d_r: C_r\to C_{r-1},$ the boundary map in the Novikov complex.
Precisely $$\rank \  d_r= \sum _{t\in \mathbb R_{>0}}\boldsymbol \gamma^\omega_r(t).$$     

The extension of this result  from smooth Morse closed one forms on a closed manifold to tame TC1-form on a compact ANR should be regarded as a considerable  weakening  of the hypothesis '' all zeros of $\omega$ are of Morse type" 
in order to recover all informations provided by the Novikov complex. 

\begin{theorem}\label {TP}\

Suppose $M$ is a closed topological manifold and $\omega\in  \mathcal Z^1_t( X;\xi),$
\begin {enumerate}
\item $\boldsymbol  \delta^\omega_r(t)= \boldsymbol  \delta^{\omega}_{n-r} (-t),$
\item $\boldsymbol \gamma^\omega_r(t)= \boldsymbol \gamma^{-\omega}_{n-r-1} (t).$
\end{enumerate}
\end{theorem}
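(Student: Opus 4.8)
The plan is to deduce both statements from Poincaré duality on the covering space $\tilde X$, exactly as the duality Theorems~1.5 of \cite{BH} are deduced in the real- and angle-valued cases. The starting point is the observation that a lift $f = f^\omega : \tilde X \to \mathbb R$ of $\omega$ has, up to the sign change $t \mapsto -t$, the lift $-f$ of $-\omega$ as a lift; both are $\Gamma$-equivariant proper-on-sublevels maps after the tameness reduction, so the configurations $\boldsymbol\delta^\omega_r$ and $\boldsymbol\gamma^\omega_r$ are built from the same box/refinement data on $\tilde X$ that one uses for ordinary level-set persistence. Concretely, I would first record the identity of filtered spaces $\tilde X^{f \le a} = \tilde X^{(-f) \ge -a}$ and, dually, $\tilde X^{f \ge a} = \tilde X^{(-f) \le -a}$, and note that these intertwine the two halves of the Mayer--Vietoris/interval-module decomposition that produces $\boldsymbol\delta$ and $\boldsymbol\gamma$. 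This is the bookkeeping that turns $t$ into $-t$ in Item~1 and keeps $t>0$ fixed while flipping $\omega \rightsquigarrow -\omega$ in Item~2.

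Next I would invoke Poincaré--Lefschetz duality on $\tilde X$. Since $M$ is a closed $\kappa$-orientable topological manifold of dimension $n$, the covering $\tilde X$ is an open $n$-manifold, and for any $a<b$ one has natural isomorphisms
\[
\homok\bigl(\tilde X^{a \le f \le b}\bigr) \;\cong\; \homok^{\,BM}\!\bigl(\text{complementary pair}\bigr) \;\cong\; \mathsf H_{n-k}\bigl(\tilde X^{f \le a} ,\, \tilde X^{f \le b'} ; \kappa\bigr)^{\vee}
\]
relating the homology of a "slab" for $f$ to the (Borel--Moore / relative) homology of a complementary slab for $-f$ in complementary degree. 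These isomorphisms are $\Gamma$-equivariant because the deck action preserves the orientation class (the manifold is closed, so there is no orientation-character subtlety beyond $\kappa$-orientability), hence they descend to isomorphisms of $\kappa[\Gamma]$-modules and, after tensoring up to the Novikov field, to isomorphisms respecting the Novikov structure. Feeding this into the interval-module description of Section~\ref{S3}, a bar of $\boldsymbol\delta^\omega_r$ at level $t$ with multiplicity $\mu$ corresponds to a bar of $\boldsymbol\delta^{\omega}_{n-r}$ at level $-t$ with the same multiplicity, which is Item~1; the relative bars that define $\boldsymbol\gamma$ get sent to relative bars for $-\omega$ in degree $n-r-1$ at the same positive length $t$, which is Item~2.

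I expect the main obstacle to be the functorial and equivariant bookkeeping rather than any single hard theorem: one must check that the duality isomorphisms are compatible with the \emph{directed} structure of the persistence modules (they reverse arrows, which is precisely why degrees and signs flip the way they do), that they behave well with respect to the bottleneck topology on $Conf(\mathbb R \setminus K)$ used to define $\boldsymbol\gamma$, and that passing between $\tilde X$ and $X$ does not introduce spurious infinite-dimensionality — here tameness and the fact that $\Gamma \hookrightarrow \mathbb R$ are exactly what keep all the relevant $\kappa[\Gamma]$-modules finitely generated so that ranks and multiplicities are well defined. A secondary point to handle carefully is the endpoint $K = (-\infty,0]$: Item~2 concerns $\boldsymbol\gamma$ on $\mathbb R_{>0}$, and one must verify that the complementary-slab construction for $-\omega$ lands bars in $\mathbb R_{>0}$ as well (it does, since bar \emph{lengths} are intrinsic and unchanged by the duality, only their "degree label" shifts by $n$ and the form is negated). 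Once these compatibilities are in place the theorem follows formally; the detailed verification is deferred to part~II as announced, and I would present here only the reduction to Poincaré--Lefschetz duality on $\tilde X$ together with the dictionary translating it into the asserted symmetries of $\boldsymbol\delta$ and $\boldsymbol\gamma$.
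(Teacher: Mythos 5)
The paper does not actually prove Theorem \ref{TP} in this installment: the proof is explicitly deferred to part~II, and the only argument offered here is a reduction, in the special cases of degree of irrationality $0$ and $1$, to Theorems 5.2, 5.3, 5.5, 5.6, 6.3 and 6.4 of \cite{B} (equivalently the results of \cite{BH}), via the identities $\boldsymbol\delta^\omega_r(t)=\sum_a\delta^f_r(a,a+t)$ and its analogue for $\boldsymbol\gamma$. So there is no complete proof in the paper to measure yours against, and your own text concedes the same deferral at the end.

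Judged as a proof, your proposal has a genuine gap rather than just missing routine details. The central step --- the Poincar\'e--Lefschetz isomorphism between slab homology for $f$ and complementary relative (Borel--Moore) homology for $-f$ in degree $n-k$ --- is asserted, not established, and it sits exactly where the difficulty of the case $k\geq 2$ lives: as the introduction stresses, $f$ is never proper when the degree of irrationality exceeds $1$, so the slabs $f^{-1}([a,b])\subset\tilde X$ are non-compact with homology that is in general infinite dimensional over $\kappa$, and a naive dual $H_{n-k}(\cdot)^{\vee}$ of such a space is not the right object (nor is it a finitely generated $\kappa[\Gamma]$-module; only the quotients $\mathbb I_a(r)/\mathbb I_{<a}(r)$, $\hat\delta_r(a,b)$, $\hat\gamma_r(a,b)$ are finite dimensional). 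The duality therefore has to be formulated and proved directly for the finite-dimensional invariants $\hat\delta_r(a,b)$ (built from $\mathbb I_a(r)\cap\mathbb I^b(r)$) and $\hat\gamma_r(a,b)$ (built from $\mathbb T_r(a,b)$), and the two different degree shifts --- $n-r$ with the same $\omega$ for $\boldsymbol\delta$, versus $n-r-1$ with $-\omega$ for $\boldsymbol\gamma$ --- must come out of that computation (the shift by one originates in the $\mathbb T_{r-1}$ term of the exact sequence (\ref{E5}), i.e.\ in the asymmetry between the closed-open and open-closed bar codes noted in the introduction); in your write-up these shifts are read off from the statement rather than derived. If you want a proof within the scope of part~I, the route the author actually indicates is the reduction to the $k\leq 1$ results of \cite{B}; a proof for $k\geq 2$ along your lines would in addition require the $\Gamma$-equivariant duality for the modules $\mathbb I$ and $\mathbb T$ themselves, which is precisely the content promised for part~II.
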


Let $\mathcal Z^1_t( X;\xi) \subset \mathcal Z(X;\xi)$ denote the space of tame topological closed 1-forms with the topology induced from the compact open topology  on $\mathcal Z^1(X;\xi),$ \footnote { $\mathcal Z^1_t( X;\xi)$ is dense in $\mathcal Z(X;\xi)$} 
defined in Section \ref{S2}.  The topology on $Conf_{\beta^N_r(X:\xi)}(\mathbb R)$ is the collision topology and  the topology on $Conf (\mathbb R_+),$ with $\mathbb R_+$ viewed as $Y\setminus K$ for $Y= \mathbb R$ and $ K=(-\infty, 0],$ is the bottleneck topology described in subsection (\ref {SS31}).

\begin{theorem} \label {TS}\
\begin{enumerate}
\item The assignment $\boldsymbol  \delta_r : \mathcal Z^1_t( X;\xi)\rightsquigarrow Conf_{\beta^N_r(X:\xi)}(\mathbb R)$ is continuous  and extends to a continuous assignment on the entire $\mathcal Z^1(X;\xi).$
\item The assignment $\boldsymbol  \gamma_r : \mathcal Z^1_t( X;\xi)\rightsquigarrow Conf (\mathbb R_+)$ 
is continuous. 
\end{enumerate}
\end{theorem}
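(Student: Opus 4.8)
\medskip\noindent\emph{Proof proposal.}
Both configurations are read off from the lift $f=f^\omega:\tilde X\to\mathbb R$ of $\omega$ to the $\Gamma=\mathbb Z^k$-cover attached to $\xi$, that is, from the interlevel persistence data $\mathbb I\rightsquigarrow H_r(f^{-1}(\mathbb I))$ for intervals $\mathbb I\subset\mathbb R$ with $\kappa$-coefficients (equivalently, from its Novikov-module refinement that also records the $\Gamma$-action): $\boldsymbol\delta_r$ collects the ``closed--closed'' invariants and $\boldsymbol\gamma_r$ the ``mixed'' ones, in the box classification of \cite{BH}. So the plan is to split Theorem \ref{TS} into (i) continuity of the passage from $\omega$ to a lift $f^\omega$, and (ii) a stability estimate showing that $\boldsymbol\delta_r$ and $\boldsymbol\gamma_r$ depend on $f^\omega$ in a Lipschitz way for the collision metric on $Conf_{\beta^N_r(X;\xi)}(\mathbb R)$ and the bottleneck metric on $Conf(\mathbb R_+)$ respectively. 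Step (ii) is the TC1-form analogue of Theorems 1.3 and 1.5 of \cite{BH} and of the stability arguments in sections 6--7 of \cite{B}.

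For (i): fixing a base point and a lift, if $\omega,\omega'$ lie in the class $\xi$ and are $\eps$-close in the compact open topology then $\omega-\omega'=dh$ for a topological function $h$ on $X$ of sup-norm $O(\eps)$, so the lifts satisfy $\|f^\omega-f^{\omega'}-c\|_\infty=O(\eps)$ on all of $\tilde X$ by $\Gamma$-equivariance, for a single additive constant $c$. Since $\boldsymbol\delta_r$ is translation invariant and $\boldsymbol\gamma_r$ depends only on differences of critical values (last line of Theorem \ref{TT}), the constant $c$ is harmless, and (i) reduces Theorem \ref{TS} to the assertion: if $f,f':\tilde X\to\mathbb R$ are $\Gamma$-equivariant tame lifts with $\|f-f'\|_\infty\le\eps$ then the two copies of $\boldsymbol\delta_r$ are $2\eps$-close in the collision metric and the two copies of $\boldsymbol\gamma_r$ are $2\eps$-close in the bottleneck metric.

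For (ii): a sup-norm $\eps$-perturbation of $f$ yields an $\eps$-interleaving of the interlevel persistence modules, hence, by the induced-matching / algebraic stability machinery applied degreewise, a matching of their interval summands displacing every endpoint by at most $\eps$. Reading this matching in the planes where the configurations live, each $\delta$-point moves by at most $2\eps$ while staying at finite coordinates (its coordinates are differences of critical values of $f$, which themselves move by at most $\eps$) --- this is convergence in the collision topology because the total multiplicity $\beta^N_r(X;\xi)$ is fixed --- and each $\gamma$-point either moves by at most $2\eps$ or is matched across the wall $K=(-\infty,0]$, i.e.\ is created or destroyed only near the diagonal, which is convergence in the bottleneck topology of $Conf(\mathbb R_+)$. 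The main obstacle is precisely the pathology flagged in the introduction: $f^\omega$ is never proper and its critical values are dense when $k>1$, so neither zig-zag / graph persistence nor ordinary finite dimensionality of levels is available. I would handle this by working over finite windows $[a,b]\subset\mathbb R$: using tameness (section \ref{S2}) together with $\Gamma\hookrightarrow\mathbb R$, each windowed interlevel module is tame and pointwise finite dimensional over the Novikov field, interleavings of $f$ restrict to compatible interleavings of the windowed modules, and the configurations are the colimit over $b-a\to\infty$ of the windowed ones, to which the estimates pass. Making this colimit argument clean --- in particular showing stabilization of the matchings --- is the step I expect to be the most delicate.

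Finally, to extend $\boldsymbol\delta_r$ to all of $\mathcal Z^1(X;\xi)$: tame forms are dense there (section \ref{S2}), the assignment $\omega\rightsquigarrow\boldsymbol\delta_r^\omega$ is uniformly continuous on $\mathcal Z^1_t(X;\xi)$ by the Lipschitz estimate of (ii), and $Conf_{\beta^N_r(X;\xi)}(\mathbb R)$ is a complete metric space once one checks that no mass of $\boldsymbol\delta_r^{\omega_j}$ escapes to $\pm\infty$ along a convergent sequence $\omega_j\to\omega$ --- which holds because the critical values of the lifts $f^{\omega_j}$, hence the supports of $\boldsymbol\delta_r^{\omega_j}$, remain in a fixed bounded set. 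Thus $\boldsymbol\delta_r$ extends uniquely and continuously. To see that the extension is well defined and still of total multiplicity $\beta^N_r(X;\xi)$, I would use the stability of $\boldsymbol\gamma_r$ from part 2: a uniform bound on the $\gamma$-configurations bounds the ``ephemeral'' part of the interlevel data of $f^{\omega_j}$, which pins the $\delta$-points to genuine limits. I would not expect $\boldsymbol\gamma_r$ itself to extend to non-tame forms, since $\sum_{t}\boldsymbol\gamma_r^\omega(t)=\rank d_r$ is not determined by $\xi$ and can blow up as tameness degenerates --- consistent with Theorem \ref{TS} claiming the extension only for $\boldsymbol\delta_r$.
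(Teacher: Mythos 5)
You should first be aware that the paper does not actually prove Theorem \ref{TS} in this part: it is explicitly deferred to part II, and the only argument given here is that for degree of irrationality $0$ or $1$ the statement reduces, via $\boldsymbol\delta_r^\omega(t)=\sum_a\delta_r^f(a,a+t)$ and the analogous formula for $\boldsymbol\gamma_r^\omega$, to the stability theorems of \cite{B} and \cite{BH} for real- and angle-valued maps. So there is no complete proof in the text to match yours against. Your step (i) is sound and is essentially built into the paper's definitions: the metric $D(\omega_1,\omega_2)_{\tilde x}$ on $\mathcal Z^1(X;\xi)$ \emph{is} the sup distance between normalized lifts, and since the supports of $\boldsymbol\delta_r^\omega$ and $\boldsymbol\gamma_r^\omega$ consist of differences of critical values, the additive constant is harmless; the theorem does reduce to a sup-norm stability statement for $\Gamma$-equivariant lifts.

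The gap is in step (ii). You invoke the interleaving/induced-matching machinery for interlevel persistence, i.e.\ an interval (bar-code) decomposition of the modules attached to $f^\omega$. But for $k\ge 2$ these modules are not pointwise finite dimensional, the critical values are dense, and the introduction states explicitly that the zig-zag/graph-representation approach ``is apparently not applicable''; accordingly the invariants $\hat\delta_r$ and $\hat\gamma_r$ are defined directly as quotients of the spaces $\mathbb F_r$ and $\mathbb T_r$, not read off any decomposition. Your proposed repair --- windowing to $[a,b]$, interleaving the windowed modules, and passing to a colimit --- is plausibly the right shape, but the stabilization of the matchings in the colimit, which you yourself flag as ``the most delicate'' step, is precisely the content of the theorem and is not supplied; one must also check that an $\eps$-perturbation of $\omega$ perturbs all $\Gamma$-translates of a window coherently, which is where equivariance has to be used rather than merely quoted. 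Finally, the extension of $\boldsymbol\delta_r$ to all of $\mathcal Z^1(X;\xi)$ requires density of $\mathcal Z^1_t(X;\xi)$ in $\mathcal Z^1(X;\xi)$; section \ref{S2} gives examples of tame forms but proves no density statement, so this too is an unproven input. In short: your reduction to lifts is correct, but the core stability estimate and the density of tame forms are both assumed rather than established, and the former cannot be obtained by citing pointwise-finite-dimensional persistence stability.
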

\vskip .1in
To understand the relations between this paper and the previous works, \cite {BD11}, \cite {BH} and \cite {B},  the following observations are useful:

1. When $X$ is connected, a topological closed 1-form $\omega$ can be represented  by  a real valued map $f:\tilde X\to \mathbb R,$ called lift of $\omega$  cf. subsection  \ref{SS21},  $\tilde X$ the total space of the  principal covering associated to $\omega.$ This lift is determined up to an additive constant, cf. section \ref{S2} ( i.e. $f_1$ and $f_2$ lifts  of the form $\omega$ implies  $f_1= f_2 +t, t\in \mathbb R$). The configurations $\boldsymbol \delta_r^\omega$ and  $\boldsymbol \gamma_r^\omega$ are derived from the 
integer-valued maps $\delta_r^f$ resp. $ \gamma_r^f$ which have as supports points $(a,b)\in \mathbb R^2$ resp. $\mathbb R^2_+=\{(x, y )\mid  x <y\}$  with $a,b$ critical values of $f.$  
Since the support of $\delta^{f+t}$ resp.  $\gamma^{f+t}$ is the $t-$diagonal translate of the support $\delta^{f}$ resp. $\gamma^{f},$ in order to get  independence on the representative $f,$  one  passes to the quotient spaces $\mathbb R^2/\mathbb R= \mathbb R$ resp. $\mathbb R^2_+/\mathbb R= \mathbb R_+,$ where the quotient  is taken w.r. to the diagonal action action $\mu _{\Delta} (t, (a,b))= (a+t, b+t).$  The support of $\boldsymbol \delta_r^\omega$ and $ \boldsymbol \gamma_r^\omega$ is the image by $p:\mathbb R^2\to \mathbb R, \  p(a,b)= b-a,$  
of the support of $\delta_r^f$ and $\gamma_r^f.$ 

2. In this paper the notation $\gamma^f_r$ refers to the restriction to $\mathbb R^2_+= \{(x,y)\in \mathbb R^2 \mid  y-x>0\}$ of the map denoted by the same letter $\gamma^f_r$ in \cite {B}.  Note that such restriction to $\mathbb R^2_+$ collects information on the so called closed-open bar codes of $f,$ the ones of relevance in the Morse-Novikov theory, while the restriction to $\mathbb R^2_-=\{(x,y)\in \mathbb R^2 \mid y-x<0\}$ collects information on the open-closed bar codes of $f.$ Note also that the open-closed bar codes of $f$ correspond to the closed-open bar codes of $-f$ via the correspondence $(a,b)\to (-b,-a).$ 

3. An interesting example of a tame closed topological 1-form is provided by a simplicial 1-cocycle on a finite simplicial complex. 
 An algorithm to derive the bar codes (i.e. points in the support of $\boldsymbol \delta^\omega$ and $\boldsymbol \gamma^\omega$ with their multiplicity is desirable. This is possible and will be the topic of subsequent work. 
 
 In this paper we write ''$=$" for equality or canonical isomorphism and $\simeq$ for isomorphism, not necessary canonical.
 
 An alternative treatment  via persistent homology of Floer-Novikov theory was proposed by Usher and Zhang cf. \cite {UZ}. Their work has challenged us to extend the results presented in \cite {B} from angle-valued maps to topological closed one forms.

\section{Topological closed one forms and tameness} 
\label{S2}

\subsection {Topological closed 1-form }\label {SS21}

A topological closed 1-form, abbreviated TC1-form, extends the concept of  closed differential 1-form on a smooth manifold  $M$ to an arbitrary topological space $X.$ One way to obtain this is  to view it  as an equivalence class 
of multivalued maps (first definition),  an other way is  to view  it as an  equivalence class of equivariant maps on the associated principal $\mathbb Z^k-$covering (second definition). 
\vskip .2in
{\bf First definition}
\begin{enumerate}
\item A {\bf multi-valued map} is  
a systems $\{U_\alpha, f_\alpha: U_\alpha\to \mathbb R, \alpha\in A\}$ s.t. 
\hskip .1in \begin{enumerate}
\item $U_\alpha$ are open sets with $X=\cup U_\alpha,$   
\item $f_\alpha$ are continuous maps s.t $f_\alpha- f_\beta: U_\alpha\cap U_\beta\to \mathbb R$ is locally constant. 
\end{enumerate}
\item 
Two  multi-valued maps are {\bf equivalent} if put together remain a multi-valued map.
\end{enumerate}
\vskip .1in
\begin{definition}
  A  {\bf TC1-form}   
is an equivalence class of multi-valued maps. 
\end{definition}
A TC1-form $\omega$ determines a cohomology class $\xi= \xi(\omega)\in H^1(X;\mathbb R).$
It suffices  to show that a representative $\{ U_\alpha, f_\alpha: U_\alpha\to \mathbb R\}$ of $\omega$ defines for any continuous path $\gamma: [a,b]\to X$  the number $\int_{\gamma} \omega\in  \mathbb R,$ independent of the homotopy class rel. boundary of $\gamma$ and additive w.r. to juxtaposition of paths.  Indeed,  if $\gamma[a,b]\subset U_\alpha $ for some $\alpha,$ then $\int_{\gamma} \omega= f_\alpha(b)- f_\alpha(a);$ if not, one choses a subdivision of $[a,b],$  $a= t_0 <t_1 <\cdots t_r= b,$ such that $\gamma_i:= \gamma|_{ [t_i, t_{i+1}]}$ lie in some open set $U_\alpha$ and define $\int_{\gamma} \omega:= \sum \int_{\gamma_i} \omega.$ This assignment  defines an homomorphism $\xi(\omega): H_1(X;\mathbb Z)\to \mathbb R,$ equivalently a cohomology class $\xi(\omega).$
\vskip .1in 
One denotes by 
$\mathcal Z^1(X)$ the set of all TC1-forms and by
$\mathcal Z^1(X; \xi):= \{\omega\in \mathcal Z^1(X)\mid  \xi(\omega)= \xi\}.$ Clearly $\mathcal Z^1(X)$ is an $\mathbb R-$ vector space and 
$$\mathcal Z^1(X)=\sqcup_{\xi\in H^1(X;\mathbb R)}  \mathcal Z^1(X;\xi).$$ 
\vskip .1in
In view of this definition, for any $X$ compact ANR  one can find open covers of $X$, $\{U_\alpha, \alpha\in A\}$ with  the properties 
that  $A$ is a finite set  
and  $\overline U_\alpha$ is compact, connected and simply-connected.  
Such cover is called {\it good cover}.
A choice  $x_\alpha\in U_\alpha$ makes  $\omega$ uniquely represented by a multivalued map $\{f_\alpha^\omega: U_\alpha\to \mathbb R\}$ 
with $f_\alpha^\omega(x_\alpha)= 0.$ One  calls  the system $\mathcal U:=\{U_\alpha, x_\alpha, \alpha\in A\}$ with $\{ U_\alpha, \alpha \in A\}$ good cover a {\it base-pointed good cover} of $X.$  

The choice of a base pointed good cover $\mathcal U$ defines for the vector space $\mathcal Z^1(X)$ a complete norm,  
$$||\omega||_\mathcal U:= \sup _{\alpha\in A}  \sup _{x\in \overline U_\alpha} |f^\omega_\alpha (x)|,$$ and then a distance in $\mathcal Z^1(X)$ and implicitly in 
$\mathcal Z^1(X;\xi),$   
$$D(\omega_1, \omega_2)_\mathcal U:= || \omega_1-\omega_2||_{\mathcal U}.$$ 
 Different base-pointed good covers lead to equivalent norms. 
The induced topology on $\mathcal Z^1(X)$ is referred to as the {\it compact open topology}. The subsets $\mathcal Z^1(X;\xi)$ are the connected components of $\mathcal Z^1(X).$  
\vskip .1in
{\it Examples:} 
\begin {enumerate}
\item A  closed differential 1-form, $\omega\in \Omega^1(M)$ $d\omega=0$, defines  a TC1-form.  

\noindent  Indeed, in view of Poincar\'e Lemma, for any $x\in M$ one chooses an open neighborhood  $U_x\ni x$  and $f_x:U_x\to \mathbb R$ a smooth map s.t. $\omega_x|_{U_x}= d f_x.$ The system $\{U_x, f_x:U_x\to \mathbb R\}$ provides a representative of a  TC1-form.
\item A simplicial 1- cocycle 
on the simplicial complex $X$ defines  a TC1-form.

If $X$ is a simplicial complex, $\mathcal X_0$ the collection of vertices and $S\subset \mathcal X_0\times \mathcal X_0$ the collection of pairs $(x,y),$ $x,y \in \mathcal X_0$ s.t. $x,y$ are boundaries of a $1-$simplex, then a simplicial 1-cocycle is a map $\delta: S\to \mathbb R$ with the properties $\delta(x,y)= -\delta(y,x)$ and for any three vertices $x,y,z$ with $(x,y), (y,z), (x,z)\in S$ one has $\delta(x,y)+ \delta(y,z) + \delta(z,x)=0.$
The collection of open sets  $U_x,$ $U_x$ the {\it open star} of the vertex $x\in \mathcal X_0,$ and the maps $f_x: U_x\to \mathbb R,$  the linear extensions to  open simplexes of $U_x$ of the map given on the vertices in $U_x$ by $f_x(y):= \delta(x,y)$ and  $f_x(x)=0$  provides a representative of a TC1-form.
\end{enumerate}
\vskip .1in
{\bf Second definition}.
\vskip .1in
Let 
$\xi\in H^1(X;\mathbb R)= Hom (H_1(X;\mathbb Z), \mathbb R)$ and  $\Gamma = \Gamma (\xi):
= \img  (\xi) \subset \mathbb R.$  If $X$ is a compact ANR then  $\Gamma\simeq \mathbb Z^k$ with $k$ called the {\it degree of irrationality} of $\xi.$

\noindent The surjective homomorphism {$\xi: H_1(X;\mathbb Z)\to \Gamma$} defines the associated $\Gamma-$principal covering, {$\pi: \tilde X_\xi= \tilde X\to X$},  i.e. a free action $\mu:\Gamma\times \tilde X\to \tilde X$ with $\pi $ the quotient map $\tilde X \to \tilde X/\Gamma= X.$ This principal covering is unique up to isomorphism. 
When $X$ is connected so is $\tilde X.$ 

A continuous map 
$f: \tilde X\to \mathbb R$  is $\Gamma-$equivariant if  ${f(\mu(g,x))= f(x)+g}.$ 
\vskip .1in
\begin{definition}
A TC1-form $\omega$ of cohomology class $\xi$ is an equivalency class of continuous 
 $\Gamma-$equivariant  real-valued  maps $f: \tilde X\to \mathbb R$  where        
 $f_1$ is equivalent to $f_2$ iff $f_1 - f_2$ is  locally constant.  
\end{definition}  
 One 
 refers to any representative $f$ in this class as a lift of $\omega.$ Clearly any $\Gamma-$
 equivariant map on a $\Gamma-$ principal covering $\tilde X\to X$ defines a cohomology class in 
 $H^1(X;\mathbb R)= Hom(H_1(X;\mathbb Z),\mathbb R),$ the same for equivalent equivariant maps.  This because  for any continuous path $\gamma:[0,1]\to X$ and $\tilde x\in \tilde X$  with $\pi(\tilde x)=\gamma(0)$ there is a 
 unique $\tilde \gamma:[0,1]\to \tilde X$ with  $\tilde \gamma (0)= \tilde x$ and  $\gamma= \pi\cdot \tilde \gamma$ and, by  taking  $\int _{\gamma}\omega:= f(\tilde \gamma(1))- f(\tilde x)) ,$ one obtains an homomorphism $H_1(X;\mathbb Z)\to \mathbb R.$
Denote by $\mathcal Z^1(X;\xi)$ the set of TC1-forms in the cohomology class $\xi.$ 
In view of this definition the choice of the base point $\tilde x$ in $\tilde X$ (actually one in each connected component if $\tilde X$ is not connected) provides a unique lift  $f^\omega_{\tilde x}:\tilde X\to \mathbb R$ of $\omega$ with $f^\omega_{\tilde x}(\tilde x)=0.$
When $X$ is compact one defines  the distance 
$D(\omega_1, \omega_2)_{\tilde x}$ by 
$$D(\omega_1, \omega_2)_{\tilde x}:= \sup _{y\in \tilde X} | f^{\omega_1}_{\tilde x} (y) - f^{\omega_1}_{\tilde x} (y)|$$ 
which, in view of the compacity of $X$ and of the $\Gamma-$equivariance of the lifts, is a complete metric.  It is not hard to see that different choices of the base point $\tilde x$ lead to equivalent distances  and therefore to the same induced topology with the same collection of Cauchy sequences.
\vskip .1in 
It is not hard to show that  the two definitions of  $\mathcal Z^1(X)$ viewed as vector spaces equipped with complete metrics are equivalent. 
To see this  one chooses  a good cover $\{U_\alpha, \alpha\in A\}$ of $X.$ 

Indeed, a multivalued map $\{f_\alpha : U_\alpha\to \mathbb R\}$   representing $\omega$ (cf. the first definition)  can be modified to an equivalent multivalued map $\{f'_\alpha:U_\alpha\to \mathbb R\}$  by adding appropriate constants on each open set $U_\alpha$ so that $f'_\alpha\cdot \pi_\alpha: \pi^{-1}(U_\alpha)\to \mathbb R$ defines a $\Gamma-$equivariant map on $\tilde X,$ hence a representative of a TC1-form  (cf. the second definition) in the same  cohomology class.

 Conversely, a $\Gamma-$equivariant map  representing $\omega$ 
(in the second definition) and  
a collection of  continuous section  $s_\alpha: U_\alpha\to \pi^{-1}(U_\alpha)$ (i.e. $\pi\cdot s_\alpha= id_{U_\alpha}$) give a  multivalued map $\{f\cdot s_\alpha: U_\alpha\to \mathbb R\}$  representing a TC1-form (first definition) in the same cohomology class.  

It is not hard to check that the identifications above make the distances defined by different choices of base points in $U_\alpha$ and $\tilde X$ equivalent and consequently with the same Cauchy sequences.

\subsection{Weakly tame and tame real-valued maps and topological closed 1-form} \label{SS2}

Fix a field $\kappa$. 
The homology considered is always with coefficients in the fixed field $\kappa,$ (for simplicity in writing omitted from the notation) hence a $\kappa-$vector space. 
For a  continuous map $f:X\to \mathbb R$  denote by: 
\begin{equation*}
\begin{aligned}X^f_{ t}: = f^{-1}((-\infty,t]), &\ X^f_{< t}:= f^{-1}((-\infty,t)),\\ X_f^{t}:= f^{-1}([t,\infty))
,&\  X_f^{> t}:
 = f^{-1}((t,\infty))
 \end{aligned}
 \end{equation*}

Consider the direct system $\{ H_r(X^f_t): ^fi_t^{t'} (r): H_r(X^f_t)\to H_r(X^f_{t'}), \ t\leq t"\}$ with $^fi_t^{t'}(r)$ the inclusion induced linear maps  and  denote by 
$$H^{inv}_r(X; f):= \underset{t\to -\infty}\varprojlim H_r(X^f_t),  \ \ H^{dir}_r(X; f):=  \underset{t\to \infty}\varprojlim H_r(X^f_t)$$ and  the induced linear maps 
$$\begin{aligned} i^t_{-\infty}=^fi^t_{-\infty}(r): H^{inv}_r(X; f)\to &H_r(X^f_t)\\ i^\infty_t= ^fi^\infty_t(r): H_r(X^f_t) \to &H^{dir}_r(X; f) \\ \mathbb J^f_r:= ^fi^\infty_{-\infty} (r):  H^{inv}_r(X; f)\to  &H^{dir}_r(X; f) .\end{aligned}$$
Often when the decorations $f$ or $r$ are implicit  from the context  they will be dropped off the notation.

If $f_1, f_2: X\to \mathbb R$ are two continuous maps 
with $| f_1- f_2 | <C$  then  the inclusions $X^{f_1} _t \subseteq X^{f_2} _{t+C} \subseteq X^{f_1} _{t +2C} \subseteq X^{f_2} _{t+3C}$  induce 
 \begin{equation*}
 \xymatrix {H_r( X^{f_1} _t)\ar@/^2.0pc/@[][rr]^{^{f_1}i_t^{t+2C}} 
  \ar[r] 
&H_r( X^{f_2} _{t+C})\ar@/_2.0pc/@[][rr]^{^{f_2}i_{t+C}^{t+3C}} 
\ar[r] & H_r( X^{f_1} _{t +2C})\ar[r] &H_r(  X^{f_2} _{t +3C})}
\end{equation*}
which, by passing to homology, implies 
\begin {equation}\label {E1}
\begin{aligned}H^{inv}_r(X; f_1)=& H^{inv}_r(X; f_2), \\ H^{dir}_r(X; f_1)= &H^{dir}_r(X; f_2), \\ ^{f_1}i^\infty_{-\infty}= ^{f_2}i^\infty_{-\infty} =&
\mathbb J^{f_1}_r= \mathbb J^{f_2}_r.
\end{aligned}
\end{equation}  
\vskip .1in
Suppose that $f^\omega: \tilde X_\xi \to \mathbb R $ is the lift  of TC-1 form $\omega$ of cohomology class $\xi$. The $\Gamma-$equivariance  of $f^\omega$ induces a $\Gamma-$action on the system $\{ H_r(X^f_t), ^fi_t^{t'}\}$ which provides a structure of $\kappa [\Gamma]-$modules on $H_r^{inv} (\tilde X; f^\omega)$ and $H_r^{dir} (\tilde X; f^\omega)$ and make $\mathbb J^{f^\omega}_r$ a $\kappa[\Gamma]-$linear map.  
\vskip .1in
If $f^{\omega_1}, f^{\omega_2}:   \tilde X_\xi \to \mathbb R$ are two lifts of the TC-1 forms $\omega_1, \omega_2 \in \mathcal Z^1(X; \xi),$  then $| f^{\omega_1} (x) - f^{\omega_2} (x)| \leq C= \sup _{x\in  K} | f^{\omega_1}(x) - f^{\omega_2}(x)|,$ where $K $ is  the closure of a fundamental domain of the free action $\mu.$  

Then one can also write
\begin{equation}\label {E2}
\begin{aligned}
H^{inv} (X; \xi ) &:= \  H_r^{inv} (\tilde X; f^{\omega_1})= H_r^{inv} (\tilde X; f^{\omega_2})\\
H^{dir} (X; \xi ) &:= \  H_r^{dir} (\tilde X; f^{\omega_1})= H_r^{dir} (\tilde X; f^{\omega_2})\\
\mathbb J^\xi_r  &:=\  \mathbb J^{f^{\omega_1}}_r= \mathbb J^{f^{\omega_2}}_r
\end{aligned}
\end{equation}
and
 $$\mathbb J^\xi_r : H^{inv}(X;\xi) \to H^{dir} (X;\xi).$$  The map $\mathbb J^\xi_r$ is a $\kappa[\Gamma]-$linear map of $\kappa[\Gamma]-$modules. 
\vskip .1in

{\bf Hypothesis  H :}  $\mathbb J^\xi_r : H^{inv}(X;\xi) \to H^{dir} (X;\xi)$ is injective,
\vskip .1in 
\noindent When $X$  is a compact ANR this hypothesis is satisfied fo all $\xi$ of degree of irrationality $\leq 1$ and satisfied for many pars $(X, \xi)$ with $\xi$ of any degree of irrationality  cf. Corollary \ref {C34}.
  We conjecture that this hypothesis is true for any $\xi$ provided $X$ is a compact ANR.
\vskip .2in
\noindent For $a\in \mathbb R$ let $$R^f_a(r):= \dim H_r(X^f_{a},  X^f_{<a})\in \mathbb Z_{\geq 0} \sqcup \infty,  \quad  \ 
R_f^a(r):= \dim H_r(X^{a}_f, X^{>a}_f)\in \mathbb Z_{\geq 0} \sqcup \infty.$$
The value  $a\in \mathbb R$ is called {\it regular value} (w.r. to $\kappa$ ) if  $R^f_a(r)+ R_f^a(r)= R^f_a(r)+ R^{-f}_{-a}(r)= 0$ for any $r$ and {\it critical value} if not regular. value    
Denote by $CR_r(f) \subset \mathbb R$ the set of critical values of $f$  with the property  $R^f_a(r)+ R_f^a(r)\ne 0$ and  by $CR(f):=\cup _r CR_r(f).$

\begin{definition} \label {D2.6}
A continuous map   $f:X\to \mathbb  R$ is called  tame if :
\begin{enumerate}
\item for any closed interval $I\subseteq \mathbb R$ the subspace $f^{-1}(I)$ is an ANR, in particular $X$ is an ANR.
\item  for any $a\in \mathbb R$ and any $r,$ $R^f_a(r)+ R^a_f(r) <\infty.$ 
\item the set $CR(f)$ is countable.
\end{enumerate}
\end{definition}
 Since $\tilde X= \cup \tilde X^f_t$ 
  item 1. implies that 
  $$H^{dir}_r(X; f)= H_r(X),$$
and when $X$ is locally compact (and separable) items 1. and 2. imply item 3.
\vskip .2in
Let  $\omega$ be  a TC1- form on a connected space $X$ and let $f:\tilde X\to \mathbb R$ be a lift of $\omega.$ The sets $CR_r(f)$ and $CR(f)$ are $\Gamma-$invariant with respect to the action of $\Gamma$ on $\mathbb R$
by translation (recall $\Gamma\subset \mathbb R$) and the action is free. The set of orbits $CR_r(f) / \Gamma$ and $CR(f)/\Gamma$ will be denoted by $\mathcal O_r(f)$ and $\mathcal O(f)$ respectively. 
If $f_1$ and $f_2$ are two lifts of $\omega$ then $f_2= f_1 +t.$ The translation by $t$ provides a canonical bijective map $T_t: \mathcal O(f_1) \to \mathcal O(f_2)$ which preserves the $r-$component $\mathcal O_r(f).$ 
Denote then $\mathcal O(\omega):=\bigcup _{f\in \omega} \mathcal O(f) / \sim$ with $o_1\sim o_2$ ($o_i\in \mathcal O(f_i)$) iff $T_t(o_1)= o_2.$

\begin{definition}\
\begin{enumerate}
\item A  TC1-form $\omega \in \mathcal Z^1(X),$  $X$ a compact ANR, is  weakly tame if one lift  $f^\omega$ and then any other is  tame. 
\item A TC1-form $\omega \in \mathcal Z^1(X),$ $X$ compact ANR, is  tame if one lift  $f^\omega $ (and then any other) is tame  and  the set  $\mathcal O(\omega)$ is finite.
\end{enumerate}
\end{definition}

When  $X$ is not connected $\omega$ is weakly tame resp.  tame if its restriction to each component is weakly tame resp. tame.
\vskip .1in
{\bf Examples of  tame TC1-forms}
\begin{enumerate}
\item A locally polynomial \footnote { locally polynomial means that locally there exists coordinates s.t. the coefficients of the form are polynomial functions} closed differential 1-form with all zeros isolated on a closed smooth manifold is  tame.
\item  A generic  simplicial 1- cocycle on a finite simplicial complex 
defines a  TC1-form  which is tame. Here generic means that the 1- cocycle takes nonzero values on all  1- simplexes 
\footnote {The tameness  remains true without the hypothesis {\it all zeros are isolated} in case 1. and {\it generic} in case 2. but via  more elaborated arguments. 
 By similar arguments the restriction of a differential closed 1-form on a manifold $M$  to a compact Thom-Mather stratified subset $X\subset M$  defines a tame  TC1-form on $X$ }.
 \end{enumerate}  
 
  Let us check case 1. for  closed  manifolds. The arguments provided remain true when the manifold is compact and the restriction of $\omega$ to the boundary has no zeros.
 
 Let  $\pi: \tilde M \to M$ be the associated $\Gamma-$principal covering, $ f: \tilde M \to \mathbb R$ a lift  of $\omega,$ $\mathcal X$ the set of zeros of $\omega$ and $\tilde {\mathcal X} = \pi^{-1} (\mathcal X)$ the set of critical points of  $f.$  Note that $\mathcal X$ is finite.  Let $\tilde {\mathcal X}(t):= \tilde {\mathcal X} \cap f^{-1}(t).$  

Observe that $\Gamma$ acts freely on the set $\tilde X$  and the set of orbits of this action is in bijective correspondence to the set  $\mathcal X,$  hence is finite.  Observe also that the restriction of $\pi$ to $\tilde {\mathcal X}(t) $ is injective.
 
If $t\in \mathbb R$  is a regular value then $f^{-1}(t)$ is a codimension one smooth submanifold  and if  $t$ is a critical value then $ f^{-1}(t)$ is a codimension one {\it submanifold with finitely many conic singularities}, as many as the cardinality of $\tilde{\mathcal X}(t).$  
 
 For either case, $t$ regular or critical value,  $f^{-1}(t)$ is a closed subset  which is an ANR and then so is $f^{-1}(I) $ for any closed interval $I.$ This verifies requirement  (1) in Definition \ref{D2.6}.
 
 Note that if $t$ is a regular value $\tilde{M}_t$ is a manifold with boundary with interior $\tilde{M}_{<t }$ hence 
 $H_r( \tilde{M}_t,\tilde{M}_{<t})= 0.$  If $t$ is a critical value then $\tilde{M}_t\setminus \tilde{\mathcal X}(t)$ is a manifold with boundary with interior $\tilde{M}_{<t },$ hence $H_r( \tilde{M}_t,\tilde{M}_{<t})= H_r( \tilde{M}_t, \tilde{M}_t\setminus  \tilde{\mathcal X}(t))= H_r(D_t, D_t\setminus \tilde{\mathcal X}(t))$ with   $D_t= \tilde M_t\cap D,$
 where $D$ is a disjoint union of closed small discs embedded in $\tilde M$, whose interior is a neighborhood of $\tilde{\mathcal X}(t).$  The hypothesis ''local polynomial'' permits to choose such small discs that makes $D_t$ and $S_t= (\partial D)\cap \tilde M_t$  compact ANRs and $D_t\setminus \tilde{\mathcal X}(t)$ retractible by deformation to $S_t.$
 Since $(D_t, S_t)$ is a pair of compact ANRs then $H_r(D_t, D_t\setminus \tilde{\mathcal X}(t))= H_r(D_t, S_t)$ is a vector space of finite dimension.
  This verifies the finite dimensionality of $H_r(\tilde M_t, \tilde M_{<t}).$ The same arguments verify the finite dimensionality 
  of $H_r(\tilde M^t, \tilde M^{>t}),$ hence  the requirement (2) in Definition \ref{D2.6} for $t$ a critical value.  The requirement 3. is obvious in view of the compacity of $M.$

In case 2. the arguments are  similar.  Note that if $t$ is a simplicial regular value for the lift $f$ then $f^{-1}(t)$ has a collar neighborhood inside  the simplicial complex $\tilde X$ in which case $(\tilde X_t, \tilde X_{<t})$ can be treated homologically as  $(\tilde M_t,\tilde M_{<t})$ above. If $t$ is a simplicial critical value, in view of the genericity,  except for a finite set of points $\mathcal V_t= \{x_1, \cdots ,x_k\} \subset f^{-1}(f),$   $f^{-1} (t)\setminus \mathcal V_t$ has a collar neighborhood inside  $\tilde X\setminus {\mathcal V_t}.$ With these observations the homological arguments in the smooth case can be repeated.     
In the above examples  $\mathcal O(\omega)$ is a finite set.

\section {Topology}\label {S3}
\subsection {Configurations of points, collision topology, bottleneck topology}\label {SS31}

Consider a pair $(Y,K)$, $Y$ a locally compact space and $K\subset Y$ a closed subset.

A {\it configuration} of points in $Y$ is a map $\delta: Y\to \mathbb Z_{\geq 0}$ with finite support. The total cardinality of the support is the non negative integer $\sum_{y\in Y} \delta(y).$
Denote by $Conf(Y)$ the set of all configurations of points in $Y$ and by $Conf_N(Y)$ the subset of configurations whose support have total cardinality $N.$

For a configuration $\delta\in Conf (Y\setminus K)$ with support $\supp \ \delta:=\{y_1, y_2, \cdots y_k\}$ and a collection of disjoint open sets $U_1, U_2, \cdots U_k, V$
with $x_i\in U_i, K\subset V$ denote by 
$$\mathcal U(\delta; U_1, \cdots U_k, V):= \{\delta'\in Conf (Y\setminus K) \mid \begin{cases} \supp \ \delta'\subset (\bigcup_{i=1,2, \cdots k} U_i)\cup V\\
\sum_{y\in U_i} \delta'(y)= \delta(y_i) \end{cases} \}$$ 
and for  $\delta\in Conf (Y),$ and $K= \emptyset$ write  $$\mathcal U(\delta, U_1, \cdots U_k):= \mathcal U(\delta, U_1, \cdots U_k), \emptyset).$$  

On the set $Conf_N(Y)$ consider the topology generated by the {\it collections of  neighborhoods}  
\newline $\{\mathcal U(\delta, U_1, \cdots U_k)\}$ of  each $\delta \in Conf_N(Y)$  and refer to it as the {\bf collision topology}. As a topological space $Conf_N(Y)$ identifies to $Y^N/\Sigma_N,$ the quotient of the $N-$fold cartesian product of $Y$ by the group of permutations of $N$ elements.   

 On the set $Conf (Y\setminus K)$ consider the topology generated by the {\it collections of neighborhoods} \newline $\{\mathcal U(\delta, U_1, \cdots U_k,V)\}$ of  each $\delta\in Conf (Y\setminus K)$
and refer to it as the {\bf bottleneck topology}. Note that if $K=\emptyset$ the bottleneck topology and collision topology are  the same.

In this paper we will consider only the case  $Y= \mathbb R$ and $K=(-\infty, 0]$ hence $Y\setminus K= \mathbb R_{+}.$

\subsection {Some algebraic topology of a pair $(X,\omega)$} \label {SS32}

Let $\kappa$ be a field, $X$ a compact ANR,  $\omega$ a tame TC1-form 
in the cohomology class $\xi$  of degree of irrationality $k$ and  $\Gamma= \Gamma(\xi) \subset \mathbb R$ the group defined by $\xi,$ cf. subsection \ref {SS21}. 
Note that if $k\geq 2$ then $\Gamma$ is dense in $\mathbb R$. 

\noindent Let $\tilde X\to X$ be the associated principal $ \Gamma-$covering  with the free action $\mu: \Gamma\times \tilde X\to \tilde X$ and $ f:\tilde X\to \mathbb R$ be a lift  of $\omega.$  
 For any $g\in \Gamma$ the homeomorphism $\mu(g, \cdots): \tilde X\to \tilde X$ induces the isomorphism $\langle g \rangle: H_r(\tilde X)\to H_r(\tilde X)$. 

The map $f$ provides two filtrations of $\tilde X$ indexed by $t\in \mathbb R,$  for $t<t' <t'',$
\begin{equation*}
\begin{aligned}\cdots \tilde X^f_{t} \subset &\tilde X^f_{t'} \subset \tilde X^f_{t''} 
\subset\cdots,\\
\cdots \tilde X_f^{t} \supset &\tilde X_f^{t'} \supset \tilde X_f^{t''}\supset \cdots
\end{aligned}
\end{equation*}
  which induce in homology  the filtrations 
 \begin{equation*}
\begin{aligned}\cdots 
\mathbb I^f_{ t}(r) \subseteq& \mathbb I^f_{ t'}(r) 
\subseteq \mathbb I^f_{ t''}(r)\subseteq \cdots,\\
\cdots \mathbb I_f^{t}(r) \supseteq& \mathbb I_f^{t'}(r)\supseteq \mathbb I_f^{ t"}(r)\supseteq \cdots
\end{aligned}
\end{equation*}
with  
$$\mathbb I^f_t(r):= \img (H_r(\tilde X_t)\to H_r(\tilde X)) \ \rm{and}\    \mathbb I_f^t(r):= \img (H_r(\tilde X^t)\to H_r(\tilde X))$$ 
Clearly $\langle g \rangle (\mathbb I^f_{t}(r)) =\mathbb I^f_{t+g}(r)$ and  
$\langle g\rangle (\mathbb I_f^{t}(r)) =\mathbb I_f^{t+g}(r).$

Note  that:
\begin{enumerate}
\item The $\kappa-$vector space $H_r(\tilde X)$ is actually a  f.g. $\kappa[\Gamma]-$module (since $X$ is a compact ANR) actually a Noetherian module, 
\item $\mathbb I^f_{-\infty}(r):= \cap_{t\in \mathbb R} \mathbb I^f_t(r)$ and $\mathbb I_f^{\infty}(r):= \cap_{t\in \mathbb R} \mathbb I_f^t(r)$ are $\kappa [\Gamma]-$submodules, 
\item $H_r(\tilde X)= \cup _t \ \mathbb I^f_t(r)= \cup _t\  \mathbb I_f^t(r),$
\item $H^N_r(X; \xi):= H_r(\tilde X)/ Tor H_r(\tilde X)$ is a f.g. torsion free  $\kappa[G]-$ module of rank $\beta_{alg, r}^N(X; \xi)$ (i.e. the rank of a maximal free submodule), number referred to as the algebraic Novikov-Betti number or simply as Novikov-Betti number. 

\noindent Note that when $\kappa= \mathbb R$ or $\mathbb C$ then $\beta^N_r(X; \xi)$ equals the $L_2-$Betti number $\beta^{L_2}_r(\tilde X)$ of $\tilde X$ (cf. \cite {Lu}  , Lemma 1.34).
\end{enumerate}

\begin{proposition}\label{P31}\

\begin{enumerate}
\item $Tor H_r(\tilde X)= \mathbb I^f_{-\infty} (r).$
\item $Tor H_r(\tilde X)= \mathbb I_f^{\infty} (r).$
\end{enumerate}
\end{proposition}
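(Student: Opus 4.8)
The plan is to prove both statements simultaneously by exploiting the symmetry $f \rightsquigarrow -f$, which interchanges the two filtrations; indeed $\tilde X^t$ for $f$ is $\tilde X_{-t}$ for $-f$, so once we establish part (1) for every weakly tame lift, part (2) follows by applying (1) to $-f$ (which is again a lift of the TC1-form $-\omega$, and weak tameness is visibly symmetric). So I would concentrate on showing $\Tor H_r(\tilde X) = \mathbb I_{-\infty}(r) = \bigcap_{t} \mathbb I_t(r)$.

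\smallskip
\noindent\emph{The inclusion $\mathbb I_{-\infty}(r) \subseteq \Tor H_r(\tilde X)$.} This is the algebraic half. The key fact is that $H_r(\tilde X)$ is a Noetherian $\kappa[\Gamma]$-module and that $\langle g\rangle(\mathbb I_t(r)) = \mathbb I_{t+g}(r)$. Since $\Gamma \subset \mathbb R$ acts by translation, the submodule $\mathbb I_{-\infty}(r)$ is $\kappa[\Gamma]$-invariant, and moreover for any $g \in \Gamma$ with $g > 0$ we have $\mathbb I_t(r) \subseteq \mathbb I_{t+g}(r)$, so on $\mathbb I_{-\infty}(r)$ the translation action is, roughly, "shift-invariant" in a way that forces torsion. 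More precisely, I would argue that a nonzero free $\kappa[\Gamma]$-submodule of $H_r(\tilde X)$ cannot be contained in $\bigcap_t \mathbb I_t(r)$: a free generator $v$ lies in some $\mathbb I_{t_0}(r) = \img(H_r(\tilde X_{t_0}) \to H_r(\tilde X))$, which is a \emph{finitely generated} $\kappa$-vector space when $f$ is weakly tame (here I use that $\tilde X_{t_0}$ has finite-dimensional homology in each degree — which needs a short justification from weak tameness, via the fact that $f^{-1}(I)$ is an ANR and the $R$-invariants are finite, summed over finitely many critical orbits); hence $\mathbb I_{t_0}(r)$ cannot contain the infinite-dimensional free module $\kappa[\Gamma]\cdot v$ unless $v$ is torsion. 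Since this holds for any element of $\mathbb I_{-\infty}(r)$, and $\mathbb I_{-\infty}(r)$ is $\kappa[\Gamma]$-stable, we conclude $\mathbb I_{-\infty}(r) \subseteq \Tor H_r(\tilde X)$.

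\smallskip
\noindent\emph{The inclusion $\Tor H_r(\tilde X) \subseteq \mathbb I_{-\infty}(r)$.} This is the geometric half and I expect it to be the main obstacle. Let $x \in \Tor H_r(\tilde X)$, so $P(t) \cdot x = 0$ for some nonzero $P \in \kappa[\Gamma]$. Using $H_r(\tilde X) = \bigcup_t \mathbb I_t(r)$ we may write $x = \iota_*(y)$ for some $y \in H_r(\tilde X_a)$, some $a$. I want to show $x \in \mathbb I_t(r)$ for every $t$, i.e. that $x$ can be pushed in from arbitrarily low levels. The idea is that the torsion relation $P(t)x = 0$, combined with $\langle g\rangle \mathbb I_a(r) = \mathbb I_{a+g}(r)$ and the fact that $\Gamma$ contains arbitrarily negative elements (and, when $k \geq 2$, is dense), lets one "slide $x$ downward": write $P = \sum_{g} c_g\, g$ and normalize so that one of the terms is $c_{g_0} g_0$ with $g_0$ minimal; then $c_{g_0}\langle g_0\rangle(x) = -\sum_{g \neq g_0} c_g \langle g\rangle(x)$, and each $\langle g\rangle(x)$ with $g > g_0$ lies in $\mathbb I_{a+g}(r) \supseteq \mathbb I_{a+g_0}(r)$... the bookkeeping must be arranged so that applying $\langle -N g_0\rangle$ repeatedly expresses $\langle -Ng_0\rangle(x)$ in terms of classes supported below any prescribed level. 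Equivalently, and perhaps more cleanly: the $\kappa[\Gamma]$-submodule generated by $x$ is finite-dimensional over $\kappa$ (torsion + Noetherian), hence contained in $\mathbb I_{t_1}(r)$ for some $t_1$ and stable under all $\langle g\rangle$; but $\langle g\rangle$ maps it into $\mathbb I_{t_1+g}(r)$, and letting $g \to -\infty$ through $\Gamma$ shows the submodule, being $\langle g\rangle$-invariant, lies in $\bigcap_t \mathbb I_t(r) = \mathbb I_{-\infty}(r)$. So the heart of the matter is the observation: \emph{a finite-dimensional $\kappa[\Gamma]$-submodule of $H_r(\tilde X)$ on which $\Gamma$ acts through translations with all $\langle g\rangle$-images nested in the $\mathbb I_\bullet$ filtration must be contained in $\mathbb I_{-\infty}$} — precisely because it equals $\langle g\rangle$ of itself for all $g$, and $\langle g\rangle(\text{submodule} \cap \mathbb I_{t_1}) \subseteq \mathbb I_{t_1 + g}$, which for $g \ll 0$ forces it below any level.

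\smallskip
The step I am least comfortable with is making the "slide downward" precise without circularity: one must be sure that the finite-dimensional submodule generated by $x$ genuinely sits inside a single $\mathbb I_{t_1}(r)$ (this uses weak tameness: each $\mathbb I_t(r)$ is finite-dimensional and they exhaust $H_r(\tilde X)$, so any finite set of classes sits in a common one), and then that $\langle g\rangle$-invariance of that submodule, together with $\langle g\rangle \mathbb I_{t_1}(r) = \mathbb I_{t_1+g}(r)$ and $g$ ranging over all of $\Gamma$ (in particular arbitrarily negative values), forces containment in the intersection. I would also double check the degenerate cases $k = 0$ (where $\Gamma = 0$, $\tilde X = X$ is compact, $H_r$ is already finite-dimensional, $\Tor = 0$, and $\mathbb I_{-\infty}(r) = \mathbb I_t(r) = H_r(X)$ — wait, here one needs $\mathbb I_{-\infty} = 0$, which holds since for $t$ below $\min f$ we get $\tilde X_t = \emptyset$) and $k = 1$ (the case already handled in \cite{BH}, which can serve as a sanity check and template).
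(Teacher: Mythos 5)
Your reduction of item (2) to item (1) via $f\rightsquigarrow -f$ is fine, but both halves of your proof of (1) rest on finite-dimensionality claims that fail once the degree of irrationality is positive. For the inclusion $\mathbb I_{-\infty}(r)\subseteq Tor\, H_r(\tilde X)$ you need $H_r(\tilde X_{t_0})$, hence $\mathbb I_{t_0}(r)$, to be a finite-dimensional $\kappa$-vector space. It is not: $CR(f)$ is $\Gamma$-invariant, hence unbounded below, so $\tilde X_{t_0}$ contains infinitely many critical levels, and weak tameness only bounds each $H_r(\tilde X_a,\tilde X_{<a})$ separately, not their totality below $t_0$. Already for $X=S^1\vee S^2$ with $\omega$ the standard form on $S^1$ one has $\mathbb I_{t_0}(2)\simeq\bigoplus_{n\leq t_0}\kappa$. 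The paper's argument for this inclusion avoids the issue entirely: it uses only that $H_r(\tilde X)$ is Noetherian over $\kappa[\Gamma]$, so the chain of submodules generated by $\langle g_{r_1}\rangle(x),\langle g_{r_2}\rangle(x),\dots$ (with $g_{r_i}\to-\infty$, all lying in the submodule $\mathbb I_{-\infty}(r)$) stabilizes, producing a nontrivial relation $(\sum P_{r_i}g_{r_i})\cdot x=0$ and hence torsion.

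For the inclusion $Tor\, H_r(\tilde X)\subseteq\mathbb I_{-\infty}(r)$ both of your variants break down. In the ``slide downward'' version you normalize by the minimal exponent $g_0$; solving for $c_{g_0}\langle g_0\rangle(x)$ expresses $x$ in terms of $\langle g-g_0\rangle(x)$ with $g-g_0>0$, i.e.\ in terms of classes in \emph{higher} filtration levels, which yields nothing. The correct bookkeeping (as in the paper) isolates the maximal term $g_k$: from $a_kx=-\sum_{i<k}a_i(g_i-g_k)\cdot x$ one gets $x\in\mathbb I_{t-(g_k-g_{k-1})}(r)$, and iterating this fixed strictly negative shift gives $x\in\mathbb I_{-\infty}(r)$. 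Your ``cleaner'' version has two independent defects: a cyclic torsion module over $\kappa[\Gamma]\simeq\kappa[t_1^{\pm},\dots,t_k^{\pm}]$ need not be finite-dimensional over $\kappa$ when $k\geq2$ (e.g.\ $\kappa[t_1^{\pm},t_2^{\pm}]/(t_1-1)$), and, more seriously, the containment $\kappa[\Gamma]\cdot x\subseteq\mathbb I_{t_1}(r)$ for a single $t_1$ is \emph{equivalent} to $x\in\mathbb I_{-\infty}(r)$, because $\langle g\rangle\mathbb I_{t_1}(r)=\mathbb I_{t_1+g}(r)$ and $\mathbb I_{t_1}(r)$ is not $\Gamma$-stable; asserting that containment is assuming the conclusion.
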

\begin{proof}
The $\kappa[\Gamma]-$ module structure of $H_r(\tilde X)$ is given by 
$$(\sum_{i= 1, \cdots k} a_{g_i} g_i) \cdot x:= \sum_{i= 1, \cdots k} a_{g_i}(\langle g_i\rangle  (x))$$ with $a_{g_i}\in \kappa, \ a_{g_i}\ne 0$
and then if $x\in \mathbb I^f_t(r)$ resp. $\mathbb I_f^t (r)$ one has
$$(\sum_{i= 1, \cdots k} a_{g_i} g_i) \cdot x\in \mathbb I^f_{t+\max g_i}(r),\ \rm{resp.}\  (\sum_{i= 1, \cdots k} a_{g_i} g_i) \cdot x\in \mathbb I_f^{t+ \min g_i}(r).$$ 

To check the inclusion $Tor H_r(\tilde X) \subset \mathbb I^f_{-\infty}(r)$ in  item 1. one starts with  $x\in Tor H_r(\tilde X)$ which has to belong by (3.) above to some $\mathbb I^f_t(r).$  Suppose that 
$$(a_{0} g_0 + a_1 g_1 + \cdots a_{k} g_k) \cdot x=0$$ with $g_0 < g_1 < \cdots <g_k,  \ a_{g_i}\ne 0.$ 
 Then $x\in \mathbb I^f_t(r)$ implies  
$$a_k  x= -  (a_{0} (g_0-g_k) + a_1 (g_1-g_k)  + \cdots a_{k-1} (g_{k-1}-g_k)) \cdot x,$$ hence $x\in  \mathbb I^f_{t- (g_k- g_{k-1})}(r).$ By repeating the argument $x\in  \mathbb I^f_{t- n(g_k- g_{k-1})}(r)$ 
 for any $n,$ one derives  $x\in  \mathbb I^f_{-\infty}(r).$
 
Similarly, to check the inclusion $Tor H_r(\tilde X) \subset \mathbb I_f^{\infty}(r),$ one starts with $x\in \mathbb I_f^t(r),$  suppose that $g_0 > g_1 > \cdots >g_k, \ a_{g_i}\ne 0$
and derives $x\in  \mathbb I_f^{t + (-g_k+ g_{k-1})}(r),$ hence  $x\in  \mathbb I_f^{t + n(-g_k+ g_{k-1})}(r)$ 
 for any $n,$ hence  $x\in  \mathbb I_f^{\infty}(r).$
 \vskip .1in
To check that $I^f_{-\infty}(r)\subseteq  Tor H_r(\tilde X)$ and $I_f^{\infty}(r)\subseteq  Tor H_r(\tilde X)$  one uses the fact that $H_r(\tilde X)$ is a f.g. $\kappa[\Gamma]-$module.  If $x\in I^f_{-\infty}(r)$ then there exists an infinite collection of negative $g's$ in $\Gamma,$ say $\cdots <g_r< g_{r-1}< \cdots <g_2<  g_1,$ such that 
$\langle g_r\rangle (x) \in \mathbb I^f_{-\infty}(r)$ and, in view of  the fact that  $\mathbb I^f_{-\infty}(r)$ is f.g.,   a finite collection of elements $P_{r_i}\in \kappa[G], i=1,2, \cdots K$ 
 s.t. $$0= \sum _{1\leq i \leq K}  P_{r_i}\cdot (\langle g_{r_i}\rangle (x))=(\sum _{1\leq i \leq K}  P_{r_i} g_{r_i})\cdot  x.$$ Hence one obtains $x\in Tor H_r(\tilde X).$  
 By a similar argument one concludes that $x\in \mathbb I_f^{\infty}(r)$ implies  $x\in Tor H_r(\tilde X).$ 
\end{proof}

As an immediate consequence one has 
\begin{equation}\label {EEE2}
\begin{aligned}
\mathbb I^f_a(r)\cap\mathbb I_f^{\infty}(r)
= \mathbb I_f^\infty(r)= Tor(H_r(\tilde X)\\
\mathbb I^f_{-\infty}(r)\cap\mathbb I_f^{b}(r)= 
\mathbb I^f_{-\infty}(r)= Tor H_r(\tilde X).
\end{aligned}
\end{equation}
which in view of $\cup_{t\i \mathbb R} \mathbb I^f_t(r) = H_r(\tilde X)$ implies $$\img \ \mathbb J^\xi_r \subseteq Tor H_r(\tilde X).$$ 
\vskip .2 in 

For the purpose of this paper we are interested in deciding  when (conjecturally always) for $\xi \in H^1(X;\mathbb R),$ the linear map $\mathbb J^\xi_r$  is injective . Proposition \ref {P33} below 
provides some partial answer to this question.   To formulate them  one needs some preliminary definitions and observations. 
\begin{itemize}
\item For $p:X \to Y$ a continuous map, \  $X, Y$ compact ANRs  denote by $CR(p)$ the critical values set as 
$$CR(p):= \cap \{  K\subset Y, K\ \rm {compact}\ \mid  p: X\setminus  p^{-1} (K) \to Y\setminus  K\  \rm {is \ a\ fibration}\}$$ 

\item For $f$ a lift of the tame TC-1 form  $\omega \in \mathcal Z (X; \xi),$ $\xi \in H^1(X; \mathbb R),$ $X$ compact ANR, 
(to avoid notational repetitions)  it will be convenient to write also 
$$H_r(\tilde X_{-\infty})\ \rm{for}\   H_r^{inv} (X;\xi)$$   $$H_r(\tilde X_{\infty})\ \rm{for}\  H_r^{dir} (X;\xi),$$ use 
the abbreviations $i_t (r)$ and $\pi_t(r)$  for the canonical maps 
\begin{equation*}
\begin{aligned}
i_t  (r) : = i_t^ \infty (r) : H_r(\tilde X_t) \to H_r^{dir} (X; \xi)\\  
\pi_t(r):=i_{-\infty} ^t(r)  : H^{inv} (X;\xi) \to  H_r(\tilde  X_t)
\end{aligned}
\end{equation*} 
and the notaions $\iota_t(r)$ and $\iota^t(r)$  for the inclusion induced linear maps 
\begin{equation*}
\begin{aligned}
\iota _t (r) : H_r(\tilde X^f (t) )\to  H_r(\tilde X^f_ t), \\    
\iota ^t (r):  H_r(\tilde X^f (t) )\to  H_r(\tilde X^t_f)   
\end{aligned}
\end{equation*} 
where $X^f(t)= f^{-1}(t).$

Then
for  any $0\ne x\in H_r(\tilde X_t), t\in (-\infty, \infty)$ define
$$\tau_t (x)= \tau(x) := \sup \{s \mid  i^s_t(x)\ne 0\} \subset (t, \infty]$$
and put $$\tau (0)= -\infty.$$
The omission of $t$ in this notation is justified by Item (1) in the following list of properties:

(1): $\tau_{t'}( i_t^{t'} (x))= \tau _t(x)$ provided $t' < \tau_t(x).$
 
(2): $\tau(\lambda x)= \tau(x)$ for $\lambda\in \kappa\setminus 0,$

(3) $\tau (gx)= \tau(x)+g$ for $g\in \Gamma,$

(4) $\tau (x+y) \leq \sup \{ \tau(x) , \tau (y)\}.$ 
 \end{itemize}

As a consequence of (2), (3) (4)  above   one has the following
\begin {obs}\label  {O32} \ 

a) If the elements of the collection $\{x_\alpha\in H_r(\tilde X_t)
\mid t \geq -\infty \}$ satisfy $\tau (x_\alpha)\ne \tau (x_\beta) $ for any $\alpha\ne \beta$ then they are $\kappa-$linearly independents.

b) If $x\in
\mathbb H^{inv} _r(X; \xi)$  with $\tau (x) <\infty$ then the collection of elements $\{ gx \in \mathbb H^{inv}_r(X,\xi) \mid g\in \Gamma\}$  are $\kappa-$linearly independent  and for any $t$ the collection of elements $\{ i^t_{-\infty} (gx) \in H_r(\tilde X_t) \mid  g+\tau(x) >t\}$  are $\kappa-$linearly independent.

c) Moreover for any $g$ with  $g > t - \tau(x)$ and $x$ as in (b) there exists 
$y_g \in \ker (\iota_t(r): H_r( \tilde X(t)) \to H_r (\tilde X))$ s.t. $\iota_t(r) (y_g)= \pi_t (r) (gx)$ and the collection of elements $\{y_g \in H_r (\tilde X(t))\}$  
 are $\kappa-$linearly independent. Consequently, if $\dim (\ker (\iota_t(r): H_r(\tilde X(t))\to H_r(\tilde X_t))$ is finite for at least one $t,$ then $x$ has to be $ 0$. 
\end{obs}  

The proof of Item (c) needs in addition  to (2), (3) and (4)  the Mayer-Vietoris sequence 

 \hskip .5in  $\xymatrix{\cdots \ar[r]&H_r(\tilde X^f(t))\ar[r]^<<<{\iota}& H_r (\tilde X^f_t)\oplus H_r(\tilde X_f^t)\ar[r]^<<< {j} & H_r (\tilde X) \ar[r]&\cdots}$

with $\iota= \iota_t (r)\oplus \iota^t (r),    j= i_t(r) + i^t(r)$  where $i^t(r): H_r(\tilde X^t)\to H_r(\tilde X)$ is the linear map induced by the inclusion $\tilde X_f^t\subset \tilde X.$

\begin {proposition} \label {P33}\ 

Suppose $p: X\to T^k$ is a continuous map, $X$ compact ANR, such that 
$CR(p)$ is contained in a finite collection of disjointly embedded $k-$dimensional disks $D_1, D_2, \cdots D_N   \subset T^k$ 
and $\xi_0\in H^1(T^k;\mathbb R).$  
Then the linear map $\mathbb J_r^\xi$ with $\xi= {p^\ast(\xi_0)}$ is injective. 
\end{proposition}

As a consequence  one has 
\begin {corollary} \label {C34}\

\begin {enumerate} [label = (\alph*)]
\item  For any $\xi\in H^1(X;\mathbb R),$  of degree of irrationality $k\leq 1,$  $X$ compact ANR, the linear map $\mathbb J_r^\xi$ is injective.
\item For any $\xi\in H^1(T^k; \mathbb R)$  the linear map $\mathbb J^\xi_r$ is injective.
\item For any $p: X\to T^k$ with homotopy theoretic fiber a compact ANR  and $\xi_0\in H^1(T^k; \mathbb R)$ 
the linear map $\mathbb J^\xi$ with  $\xi= p^\ast (\xi_0)$ is injective.
\item the construction below provides for any$P$ a compact ANR  (resp. closed manifold)  and any $k$ positive integer a compact ANR $X$  (closed manifold) and $\xi\in H^1(X:\mathbb R)$ with the linear map $\mathbb J^\xi_r$ injective.
\end{enumerate}
\end{corollary} 

{\it Proof of Proposition \ref {P33}:}
Consider the pullback diagram  with $\pi_0$ the canonical $\mathbb Z^k-$ principal covering  
\begin{equation} \label {D1}
\xymatrix{
 \mathbb R^k \ar[r]^{\pi_0} &T^k\\
\tilde X\ar[u]^{\tilde p}\ar[r]^\pi  &X\ar[u]^p}\end{equation}
and 
$\omega_0= \sum_{i=1}^k a_i d\theta_i$ with $\theta_1,= e^{it_1},\cdots , \theta_k= e^{it_k}$ the angular coordinates on $T^k,$   $\{t_1, \cdots t_k\}$ the cartesian coordinates on $R^k,$  and 
$\{a_i\in \mathbb R\}$ a collection of real numbers.  With respect to these coordinates $\pi_0( t_1, \cdots, t_k)= (\theta_1, \cdots, \theta_k).$ Note that any cohomology class $\xi_0\in  H^1(T^k; \mathbb R)$ has a  smooth closed one form  of this type as representative.  
 
The map  
 $f_0(t_1, \cdots t_k)= \sum_{i=1}^k a_i t_i$   
 satisfies $d f_0= \pi_0^\ast (\omega_0)$ and if $CR(p)$ is a finite set then there exists $t$ s.t.
 $\pi_0(f^{-1}(t)) \cap CR(p)= \emptyset.$ Then for $f= f_0\cdot \tilde \pi$  the level $f^{-1} (t)$ is a fibration over a hyperplane $f_0^{-1} (t)$ with compact fibers, hence of finite dimensional homology.  By Observation \ref{O32}  the injectivity of $\mathbb J^\xi_r$ follows.
 
If $CR(p)$ si contained in $\sqcup_{i=1.\cdots, N}  D_i $ consider $u :T^k\to T^k$ a map homotopic to $id_{T^k}$ whose restriction to  $T^k\setminus \sqcup D_i$ is a diffeomorphism on the image and  shrinks each disc to its center.  Clearly for $p' = u\cdot p$ \ $CR(p')$ is a finite set of points,  and since $p^\ast (\xi_0)= {p'}^\ast (\xi_0)=\xi$ the injectivity of $\mathbb J^\xi_r$ holds..  

Note that for any $\xi\in H^1(X;\mathbb R)$ one can produce $p: X\to T^k$ and $\xi_0\in H^1(T^k;\mathbb R)$ s.t. $\xi= p^\ast (\xi_0)$ and if $\xi $ is of degree of irrationality $r$ then one can choose  any $k >r$  with $\xi_0$ of degree of irrationality $r.$ 

q.e.d

 {\it Poof of Corollary (\ref {C34}):}
A simple look to the diagram (\ref{D1}) shows that for  $k=1$ the map $f_0 $ and then $f$ is proper hence the homology of any level $f^{-1}(t)$ is finite  dimensional. 
This verifies (a).

For the Item  (b) , when $X= T^k,$ the levels of $f_0$ are hyperplanes and then  by Observation \ref {O32} the injectivity of  $\mathbb J^\xi_r$ holds .

For Item (c), note first that if $CR(p)=\emptyset $  the statement holds because the levels of $f$ have finite dimensional homology. If $p$ is only {\it fibration up to homotopy},  then  by crossing with a compact contractible space $Y$ and by replacing $p$ by a bundle map $p': X\times Y\to T^k$ homotopic to $p\cdot pr_X,$ one derives the statement from the previous situation. 

For Item (d),  choose $P^d$ a closed $d-$dimensional manifold and let $N^{d+k}$ be a compact $(d+k)-$ dimensional compact manifold with boundary $\partial N^{d+k}=  P^d\times S^{k-1}.$ 
Let $D^k\subset T^k$ be a $k-$ dimensional (closed) disk embedded in $T^k= $ the cartesian product of $k-$copies of $S^1,$  $u": N^{d+k}\to D^k$ be a continuous map which restricted to $\partial N^{d+k}$ is the projection on $S^{k-1}$ and $u' : (T^k\setminus \overset {\circ} {D^k}) \times P^d\to  (T^k\setminus \overset {\circ} {D^k})$ be the projection on  $ (T^k\setminus \overset {\circ}{D^k}).$  Here  $\overset {\circ} {D^k}$ denotes the interior of $D^k.$
Let $M^{d+k}$ be the closed manifold
$$ M^{d+k}:=((T^k\setminus \overset {\circ} {D^k}) \times P^d) \cup_{id_{\partial N}} N^{d+k}$$ obtained  by identifying   the boundaries of the compact  manifolds $(T^k\setminus \overset {\circ}{D^k}\times P^d$ and $N^{d+k}$ and $u:=u'\cup u''.$  The homotopy class of $u$ defines a cohomology class in $H^1(M;\mathbb Z^k)$ 
which together with a chosen  injective homomorphism   $\mathbb Z^k\to \mathbb R,$  provides $\xi \in H^1(M; \mathbb R)$ of degree of irrationality $k.$  By Proposition \ref {P33} the injectivity of $\mathbb J^\xi_r$  holds. 
Indeed the composition $\xymatrix{ {T^k\setminus \overset {\circ} {D^k}}  \ar[r]^{\subset} & M\ar[r]^u &T^k}$ induces an isomorphism in dimension one integral homology hence $u$ induces a surjective homomorphism in dimension one integral homology which makes the hypothesis of Proposition \ref {P33} satisfied. 

\subsection {Novikov-Betti numbers (a topological definition)}\label{SS33}

Recall that for  $f:\tilde X\to \mathbb R,$  a lift of  a tame TC1-form $\omega,$  the vector space $\mathbb I^f_t(r) / \mathbb I^f_{<t}(r)$ is zero when $t$ is a regular value and of finite dimension
when $t$  is a critical value, and the  isomorphism 
$\langle g \rangle :H_r(\tilde X)\to H_r(\tilde X)$  induces an  isomorphism $$\langle g\rangle _t: \mathbb I^f_t (r) / \mathbb I^f_{<t}t(r)\to \mathbb I^f_{t+g}(r) / \mathbb I^f_{(t+g)}(r).$$

Consider the $\kappa-$vector space 
\begin{equation} 
\label {EEEE}
\mathbb N\mathbb H_r(f):= \oplus_{t\in \mathbb R}  \mathbb I^f_t(r) / \mathbb I^f_{<t}(r)
\end{equation}
and note that this sum 

1) involves only components corresponding to $t$ critical values, hence of at most countable many,  

2) is a $\kappa[\Gamma]-$ module  whose multiplication by $g$ is provided by the component-wise isomorphism $\langle g\rangle_t,$    

3) is  independent of the lift $f$ up to an  
isomorphism  since  
$$\mathbb I^f_t(r)/ \mathbb I^f_{<t}(r)  =\mathbb I^{f+c}_{t+c}(r) / \mathbb I^{f+c}_{<(t+c)}(r).$$  

4) is a free $\kappa[\Gamma]-$module  canonically isomorphic to  $V\otimes_{\kappa} \kappa[\Gamma]$ where $V$ is the finite dimensional vector space   $$V= \oplus_{o\in \mathcal O(f)/\Gamma} \mathbb I^f_{a^o}(r) /\mathbb I^f_{<a^o}(r)$$ for any choice $a^o\in o\in \mathcal O_r(f).$  Different choices of $a^o$ lead to isomorphic vector spaces $V.$

One defines 
$$\beta_{top,r}^N(X;\omega):= \dim _\kappa V= \rank  (\mathbb N\mathbb H_r(f))$$  
which will be shown in section (\ref {S6}) 
to be same as  
$$\beta_{alg,r}^N(X; \xi(\omega)):= \sup \{\rank L \mid L \ \rm {free\  submodule\  of} \  H_r(\tilde X) \}.$$

One can provide a similar  definition using  $\mathbb I_f^t $ instead of $\mathbb I^f_t $.  Clearly  $\mathbb I^f_t=  \mathbb I_{-f}^{t}$ and $\mathbb I^f_{<t}= \mathbb I_{-f}^{>t}$
with $-f$ being a lift of the TC1-form $-\omega.$ From the algebraic perspective the first is based on the group $\Gamma$ the second on the group $\Gamma'$ canonically isomorphic o $\Gamma$ by the isomorphism $g\to g'=-g.$  This leads to the same numbers $\beta^N_{alg,r}$ and $\beta^N_{top,r}$.  

Both numbers $\beta_{top,r}^N(X; \omega)$ and $\beta_{alg,r}^N(X; \xi(\omega))$ whose equality is verified in section \ref {S6} are referred to as the Novikov-Betti  numbers.

 \section {The maps 
$ \delta^f_r$ and $ \gamma^f_r$} \label {S4}

In this section  $f:X\to \mathbb R$ will be a tame map, cf. Definition \ref{D2.6}. 
 Recall from the previous section the notations: 

$X^f_{a}:= f^{-1}((-\infty,a]), X^f_{< a}:= f^{-1}((-\infty,a)),  X^f_\infty=  X,$

$X_f^{a}:= f^{-1}([a, \infty)),$  \quad $ X_f^{> a}:= f^{-1}((a, \infty)),$    $X^{-\infty}_f= X,$

$\mathbb I^f_a(r):= \img (H_r(X^f_{a})\to H_r(X)),$ \quad  $\mathbb I^f_{<a}(r)= \img (H_r(X^f_{<a})\to H_r(X)) =\cup_{\alpha<a} \mathbb I^f_\alpha(r),$

$\mathbb I_f^a(r):= \img (H_r(X_f^{a})\to H_r(X)),$ \quad $\mathbb I_f^{>a}(r)= \img (H_r(X_f^{>a})\to H_r(X))= \cup_{\beta>a} \mathbb I_f^\beta(r).$

$\mathbb I^f_{-\infty}(r)= \cap_{a\in \mathbb R} \mathbb I^f_a(r),$ \quad $\mathbb I_f^{\infty}(r)= \cap_{a\in \mathbb R} \mathbb I_f^a(r).$

\noindent For any $a,b \in \mathbb R$  define 
$\boxed{\mathbb F^f_r(a,b):= \mathbb I^f_a(r)\cap \mathbb I_f^b(r)}$ and when $a<b$ define  

\hskip 1in $\boxed{\mathbb T^f_r(a,b):=\ker (H_r( X^f_ a)\to H_r(X^f_ b)).}$ 

\noindent Extend the definitions of  $\mathbb T^f_r(<a,b)$ to:   
\begin{equation}
\begin{aligned} 
\mathbb T_r ( a, \infty)&=\ker (H_r(X^f_a)\to H_r(X)) =  \varinjlim_{a<b\to \infty} \mathbb T_r(a, b),\\ 
\mathbb T_r (<a,b)&=\ker (H_r(X^f_{<a})\to H_r(X^f_b)) =  \varinjlim_{a>a' \to a} \mathbb T_r(a', b)\ \   \rm{for}\  a\leq b\leq \infty\\
\mathbb T_r (a, <b)&= \ker (H_r(X^f_a)\to H_r(X^f_{<b}))=  \varinjlim_
{a<b'\to b} 
\mathbb T_r(a, b')\  \ \rm{for}\ \   a <b' <b \\
\mathbb T_r (-\infty , b)&=\ker(\underset{b>a\to -\infty}\varprojlim H_r(X^f_a) \to H_r(X^f_b))=  \varprojlim_{b>a\to -\infty} 
\mathbb T_r(a,b).
\end{aligned}
\end{equation}
\noindent Note that :    
\begin{enumerate}
\item for $a<b <c\leq \infty$ the obvious linear map 
$$ \ker ( \mathbb T^f_r(a,c)\to \mathbb T^f_r(b,c)) \to  \mathbb T^f_r(a,b)$$ is an isomorphism,   
\item  for $a< b$ the homology exact sequence of the pair $(X^f_{b}, X^f_{a})$ implies the short exact sequence 
\begin{equation} \label {E5}
\xymatrix{0\ar[r]& \coker (H_r( X^f_ a)\to H_r(X^f_b))\ar[r] &\mathbb H_r(X^f_{b}, X^f_{ a}) \ar[r] &\mathbb T^f_{r-1}(a,b)\ar [r]&0,}
\end{equation}
\item  
  the commutative diagram 
{\scriptsize  \begin{equation} \label {E6}
\xymatrix{
         &0\ar[d]&0\ar[d]&\\
0\ar[r]&\mathbb T_r(a,b)\ar[d]\ar[r]^{=} &\mathbb T_r(a,b)\ar[d]\ar[r]&0\ar[d]\ar[r]&0\\
0\ar[r]&\mathbb T_r(a,\infty)\ar[d] \ar[r] &H_r(X_{a})\ar[d]\ar@{->>}[r] &\mathbb I_a(r)\ar[d]\ar[r]&0\\             
0\ar[r]&\mathbb T_r(b,\infty)\ar@{->>}[d]\ar[r] &H_r(X_b)\ar@{->>}[r]\ar@{->>}[d] &\mathbb I_b(r)\ar@{->>}[d]\ar[r]&0\\  
0\ar[r]&\coker (\mathbb T^f_r(a,\infty)\to \mathbb T^f_r(b, \infty))\ar[d] \ar[r] &\coker (H_r( X^f_a)\to H_r(X^f_b))\ar[d]\ar@{->>}[r] &\mathbb I_b(r)/ \mathbb I_a(r)\ar[d]\ar[r]&0\\
&0&0&0  }
\end{equation}}
has all columns and the first three rows exact, which makes the forth row also exact,

\item By passing to direct limit when $a\to b$ the short exact sequence (\ref{E5}) and the last row of (\ref{E6}) imply the isomorphism 
\begin{equation}\label {E7}
H_r(X^f_{ b}, X^f_{< b}) \simeq \mathbb I^f_b(r)/ \mathbb I^f_{ <b} (r) \oplus \coker (\mathbb T^f_r(<b,\infty)\to \mathbb T^f_r(b, \infty))\oplus \mathbb T^f_{r-1}(<b,b).
\end{equation}
\end{enumerate}
\vskip .1in
{\bf The conjecturally trivial vector spaces  $\hat \lambda^f_r$ and numbers $\lambda^\omega_r.$}
 
  For $a\in \mathbb R$   let  $$i_{-\infty}^{<a}(r) : \mathbb T_r(-\infty,a)\to \mathbb T_r(<a,a)$$  be the canonical linear map induced from  $i_t^{<a}(r): \mathbb T_r(t,a)\to \mathbb T_r(<a,a)$ by passing to the inverse limit when $t\to -\infty.$  Since $H_r(\tilde X_a, \tilde X_{<a})$ is  finite dimensional  then so is 
  \begin{equation}\label {Ee} \hat \lambda^f_r(a):=\img (i_{-\infty} ^{<a}(r)).\end{equation} 
 Denote by $$ \lambda^f_r(a):=\dim  \hat \lambda^f_r(a)$$ and 
observe that  
  \begin{enumerate}
 \item  if $h$ is a different lift of $\omega,$ hence $h= f+t,$ then $ \lambda^h_r(a)=  \lambda^f_r(a+t)$
 \item  $ \lambda^f_r(a+g)=  \lambda^f_r(a)$ for any $g\in \Gamma.$
 \end{enumerate}
Then one defines $$ \lambda^f_r(o^f):=  \lambda^f_r(a)$$ for $a\in o^f\in CR(f)/\Gamma,$ and then
\begin{equation} \label {EE11} \lambda^\omega_r:= \sum _{o\in CR(f)/\Gamma} \lambda^f_r(o),\end{equation}  integer number independent of the lift of $\omega.$ 
In case $\omega \in \mathcal Z^1(X;\xi),$\  $\mathbb J_r^\xi$  injective  implies $\lambda^\omega_r=0.$
 
 \subsection  {The assignments $\hat \delta_r^f$ and $\delta^{ f}_r.$ } \label {SS41}
\vskip .1in 

Let  $f: X\to \mathbb R$ be a fixed tame map.  
Since $f$ is fixed the decoration "$f$"  will not appear  the notation below. 

Call {\it box} a subset $B\subset \mathbb R^2$ of the form $B=(a',a]\times [b, b')$ for $-\infty\leq a' < a, \quad  b< b'\leq \infty, $  
and define 
$$\boxed{\mathbb F_r(B):=  
\frac{\mathbb I_a(r) \cap \mathbb I^b(r)} {\mathbb I_{a'}(r)\cap  \mathbb I^b(r)+\mathbb I_a(r)\cap  \mathbb I^{b'}(r)}.
}$$ Let $$ \pi^B_{(a,b)} = \pi^B_{(a,b)}(r):\mathbb F_r(a,b)\to \mathbb F_r(B)$$ be the canonical projection \footnote  {When implicit in the context,
 $r$ will be dropped off the notation}.
\vskip .1in

For $B= B'\sqcup B''$ 
with $B'= (a',a'']\times [b, b')$ ,  \ $B''= (a'',a]\times [b,b'),$\ \   $ -\infty \leq a' < a''< a, \  b< b' < b'' \leq \infty$
or    
with $B'= (a',a]\times [b'', b'),$ \ \ $B''= (a',a]\times [b,b''),$ \ \  $ -\infty \leq a' < a, \  b< b'' <b'\leq \infty$ 
the inclusion $B'\subseteq B$ induces the {\bf injective} 
linear map $$ i_{B'}^B=  i_{B'}^B(r): \mathbb F_r(B')\to \mathbb F_r(B) 
$$ and  the inclusion $B''\subseteq B$ induces the {\bf surjective} linear map $$\pi_{B}^{B''}=\pi_{B}^{B''}(r): \mathbb F_r(B)\to \mathbb F_r(B'').$$

For $-\infty\leq a''< a'< a; \  b< b' < b''\leq \infty$  one denotes by:  
\begin{equation*}
\begin{aligned}
B_{11}:=&(a'',a']\times [b',b''),&  
B_{12}:= (a',a]\times [b',b'')\\
B_{21}:=&(a'',a']\times [b,b'), &
B_{22}:= (a',a]\times [b,b'),\\
\end{aligned}
\end{equation*}
and by
\begin{equation*}
\begin{aligned}
B_{1\cdot}:= &B_{11}\sqcup B_{12}, 
\ B_{\cdot 1}:=&B_{11}\sqcup B_{21}\\
B_{\cdot 2}:= &B_{12}\sqcup B_{22},
\ B_{2\cdot}:= &B_{21}\sqcup B_{22}\\
B :=&B_{1\cdot}\sqcup \ B_{2\cdot}\quad \   B :=&B_{\cdot 1}\sqcup B_{\cdot 2}
\end{aligned}
\end{equation*}

\hskip 1 in \begin{tikzpicture} [scale=1.2]
\draw [<->]  (0,4) -- (0,0) -- (5,0);
\node at (-.2,3.5) {$b''$};
\node at (-0.2,1.5) {$b'$};
\node at (-0.2,1) {$b$};
\node at (1,-0.2) {$a''$};
\node at (3,-0.2) {$a'$};
\node at (4,-0.2) {$a$};
\draw [line width=0.10cm] (1,1) -- (4,1);
\draw [line width=0.10cm] (4,1) -- (4,3.5);
\draw [dashed, ultra thick] (1,1) -- (1,3.5);
\draw [dashed, ultra thick] (1,3.5) -- (4,3.5);
\draw [line width=0.10cm] (1,1.5) -- (4,1.5);
\draw [line width=0.10cm] (3,1) -- (3,3.5);
\node at (2,1.25) {$B_{21}$};
\node at (2,2.5) {$B_{11}$};
\node at (3.5,1.25) {$B_{22}$};
\node at (3.5,2.5) {$B_{12}$};
\node at(2.4, -1) {Figure 1
};
\end{tikzpicture}
In view of the definitions one has 
 \begin{proposition}\label {P41}\

1.  The sequence 
$$\xymatrix{ 0\ar[r]&\mathbb F_r(B')\ar[r]^{i_{B'}^B} &\mathbb F_r(B)\ar[r]^{\pi_B^{B''}}&\mathbb F_r(B'')\ar[r] &0}$$
 is exact.

2. The  diagram 
$$ \xymatrix {      &0\ar[d]             &0\ar[d]       &0\ar[d]\\
                 0 \ar[r]&\mathbb F_r(B_{11})\ar[r]^{i^{B_{1\cdot}}_{B_{11}}}\ar[d]_{i_{B_{11}}^{B_{\cdot 1}}}&\mathbb F_r(B_{1\cdot})\ar[r]^{\pi^{B_{12}}_{B_{1\cdot}}}\ar[d]^{i^B_{B_{1.}} }&\mathbb F_r(B_{12})\ar[r] \ar[d]&0\\
                 0 \ar[r]&\mathbb F_r(B_{\cdot1})\ar[r]^{i^B_{B_{\cdot1}}}\ar[d]&\mathbb F_r(B)\ar[r]^{\pi^{B_{\cdot 2}}_{B}}\ar[d]^{\pi_B^{B_{2\cdot}}} &\mathbb F_r(B_{\cdot 2})\ar[r]\ar[d]^{\pi^{B_{\cdot 2}}_{B_{22}}} &0\\
                 0 \ar[r]&\mathbb F_r(B_{21})\ar[r]^{i^{B_{2\cdot}}_{B_{21}}}\ar[d]&\mathbb F_r(B_{2\cdot})\ar[r]^{\pi^{B_{22}}_{B_{2\cdot}}}\ar[d] &\mathbb F_r(B_{22})\ar[r]\ar[d] &0\\
                                    &0                          &0                             &0}
$$ is commutative with  all rows and columns  exact sequences.
\end{proposition}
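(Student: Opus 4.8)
The plan is to reduce the whole statement to Dedekind's modular law in the lattice of subspaces of $H_r(X)$: for subspaces $A\subseteq C$ and $B$ arbitrary, $(A+B)\cap C=A+(B\cap C)$. Throughout I would keep the notation of the preceding paragraphs, writing $\mathbb{I}_a$, $\mathbb{I}^b$ for $\mathbb{I}^f_a(r)$, $\mathbb{I}_f^b(r)$, and use the monotonicities $\mathbb{I}_{a'}\subseteq\mathbb{I}_a$ for $a'\le a$ and $\mathbb{I}^{b'}\subseteq\mathbb{I}^b$ for $b\le b'$, together with the fact that for a box $B=(a',a]\times[b,b')$ the space $\mathbb{F}_r(B)=(\mathbb{I}_a\cap\mathbb{I}^b)\big/(\mathbb{I}_{a'}\cap\mathbb{I}^b+\mathbb{I}_a\cap\mathbb{I}^{b'})$ is a subquotient of $H_r(X)$, all the maps $i^{\bullet}_{\bullet}$ and $\pi^{\bullet}_{\bullet}$ being the canonical maps of subquotients of $H_r(X)$ induced by an inclusion of numerators $\mathbb{I}_{?}\cap\mathbb{I}^{?}\hookrightarrow\mathbb{I}_{?}\cap\mathbb{I}^{?}$ followed by passage to the relevant quotient.

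For Item 1 I would treat the horizontal splitting $B'=(a',a'']\times[b,b')$, $B''=(a'',a]\times[b,b')$; the vertical splitting is handled identically after interchanging the roles of the two families $\{\mathbb{I}_a\}$ and $\{\mathbb{I}^b\}$. Writing $D:=\mathbb{I}_{a'}\cap\mathbb{I}^b+\mathbb{I}_a\cap\mathbb{I}^{b'}$ and $D'':=\mathbb{I}_{a''}\cap\mathbb{I}^b+\mathbb{I}_a\cap\mathbb{I}^{b'}$, one checks $D\subseteq D''\subseteq\mathbb{I}_a\cap\mathbb{I}^b$, so $\pi_B^{B''}$ is the quotient projection $(\mathbb{I}_a\cap\mathbb{I}^b)/D\to(\mathbb{I}_a\cap\mathbb{I}^b)/D''$, hence surjective with kernel $D''/D$; and the image of $i_{B'}^B$ is $(\mathbb{I}_{a''}\cap\mathbb{I}^b+D)/D=D''/D$, using $\mathbb{I}_{a'}\cap\mathbb{I}^b\subseteq\mathbb{I}_{a''}\cap\mathbb{I}^b$. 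This gives exactness at $\mathbb{F}_r(B)$ and surjectivity of $\pi_B^{B''}$ at once. For injectivity of $i_{B'}^B$ I would compute its kernel as $\big((\mathbb{I}_{a''}\cap\mathbb{I}^b)\cap D\big)\big/(\mathbb{I}_{a'}\cap\mathbb{I}^b+\mathbb{I}_{a''}\cap\mathbb{I}^{b'})$ and apply the modular law with $A=\mathbb{I}_{a'}\cap\mathbb{I}^b\subseteq C=\mathbb{I}_{a''}\cap\mathbb{I}^b$, $B=\mathbb{I}_a\cap\mathbb{I}^{b'}$, together with the identity $\mathbb{I}_{a''}\cap\mathbb{I}^b\cap\mathbb{I}_a\cap\mathbb{I}^{b'}=\mathbb{I}_{a''}\cap\mathbb{I}^{b'}$, to get $(\mathbb{I}_{a''}\cap\mathbb{I}^b)\cap D=\mathbb{I}_{a'}\cap\mathbb{I}^b+\mathbb{I}_{a''}\cap\mathbb{I}^{b'}$, so that the kernel is zero. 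Equivalently, one may regard the three terms as the sub-quotient exact sequence attached to $\mathbb{I}_{a''}\cap\mathbb{I}^b\subseteq\mathbb{I}_a\cap\mathbb{I}^b$ after quotienting by $D$, the modular law being what identifies its sub and its quotient with $\mathbb{F}_r(B')$ and $\mathbb{F}_r(B'')$.

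For Item 2, after the evident relabeling of $a'',a',a,b,b',b''$ each of the three rows of the array is an instance of Item 1 for a horizontal splitting (of $B_{1\cdot}$, of $B=(a'',a]\times[b,b'')$, and of $B_{2\cdot}$ respectively), and each of the three columns is an instance of Item 1 for a vertical splitting (of $B_{\cdot1}$, of $B$, and of $B_{\cdot2}$); hence all rows and columns are short exact. What remains is the commutativity of the four squares, and here I expect no difficulty: all the arrows are the canonical subquotient maps described above, so along either pair of sides of a square one computes one and the same canonical map — for instance both composites $\mathbb{F}_r(B_{11})\to\mathbb{F}_r(B)$ are induced by the single inclusion $\mathbb{I}_{a'}\cap\mathbb{I}^{b'}\hookrightarrow\mathbb{I}_a\cap\mathbb{I}^b$ followed by the relevant quotient. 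Thus the diagram is commutative with all rows and columns exact.

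The only step needing genuine care is the bookkeeping in Item 1 — keeping track of which of $\mathbb{I}_a$, $\mathbb{I}^b$ is contained in which and inserting the modular law at the correct place; once that is done, Item 2 is purely formal.
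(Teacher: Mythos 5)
Your argument is correct and complete. The paper offers no written proof of Proposition \ref{P41} (it is introduced with ``In view of the definitions one has''), so your write-up supplies exactly the details left to the reader: the identification of $\ker \pi_B^{B''}$ with $\img\, i_{B'}^B$ as $D''/D$ is immediate from the inclusions of denominators, and the one non-formal point --- injectivity of $i_{B'}^B$ --- is precisely the modular law $(\mathbb I_{a''}\cap\mathbb I^b)\cap(\mathbb I_{a'}\cap\mathbb I^b+\mathbb I_a\cap\mathbb I^{b'})=\mathbb I_{a'}\cap\mathbb I^b+\mathbb I_{a''}\cap\mathbb I^{b'}$, where you correctly use $\mathbb I_{a''}\cap\mathbb I^b\cap\mathbb I_a\cap\mathbb I^{b'}=\mathbb I_{a''}\cap\mathbb I^{b'}$. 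Item 2 is, as you say, purely formal once Item 1 is in place, since every arrow is the canonical subquotient map induced by an inclusion of numerators. It is worth noting that the paper's Appendix proves the analogous Proposition \ref{P447} for the spaces $\mathbb T_r$ by axiomatizing squares satisfying conditions (1)--(3), and condition (3) there ($\img u=\img i_2\cap\img j_F$) is exactly the avatar of the modular-law step in your argument; for the $\mathbb F_r$ case that condition holds automatically in the lattice of subspaces of $H_r(X)$, which is why your more direct route works.
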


One denotes by  $\pi _{B}^{B_{22}}$ for the composition  $\pi _{B}^{B_{22}}:= \pi ^{B_{22}}_{B_{\cdot 2}} \cdot \pi ^{B_{\cdot 2}}_{B} = \pi ^{B_{22}}_{B_{2 \cdot }} \cdot \pi ^{B_{2 \cdot}}_{B}$  and by $i ^{B}_{B_{11}} $  the composition  $i ^{B}_{B_{11}}:= i ^{B}_{B_{1\cdot}} \cdot i ^{B_{1 \cdot}}_{B_{11}} = i ^{B}_{B_{\cdot 1}} \cdot i ^{B_{\cdot 1}}_{B_{11}}$  
and in general by $$\pi_B^{B''}= \pi_B^{B''}(r): \mathbb F_r(B)\to \mathbb F_r(B'')$$ when the box $B''$ is located in the lower-right corner of the box $B$ 
and by $$i_{B'}^{B}= i_{B'}^{B}(r): \mathbb F_r(B')\to \mathbb F_r(B)$$ when the box $B'$ is located in the upper-left corner of the box $B.$ 
The map $\pi_B^{B'}$ is always surjective and $i_{B'}^{B}$ always injective.

 For $\epsilon >0$ one denotes by  $B(a,b;\epsilon)$ the set $B(a,b;\epsilon):= (a-\epsilon]\times [b, b+\epsilon).$  Suppose $-\infty \leq a'<a, \ \ b <b'\leq \infty. $  
 
Define  $$\hat\delta _r(a,b):=\varinjlim_{\epsilon\to 0} \mathbb F_r(B(a,b;\epsilon))$$  
w.r. to the surjective linear maps $\xymatrix{\mathbb F_r(B(a,b;\epsilon))\ar[r]^{\pi^{B(a,b;\epsilon')}_{B(a,b;\epsilon)}}&\mathbb F_r(B(a,b;\epsilon'))},$ $\epsilon> \epsilon',$
 and denote by $$\pi^{(a,b)}_{B}= \pi^{(a,b)}_{B}(r): \mathbb F_r(B)\to \hat \delta_r(a,b)$$   
 the composition $\xymatrix{ \mathbb F_r(B)\ar[r]^{\pi_B^{B(a,b;\epsilon)}} &\mathbb F_r(B(a,b;\epsilon))\ar[r]^-{\pi^{(a,b)}_{B(a,b;\epsilon)}} &\hat \delta_r(a,b)} $ for $\epsilon <\inf \{(a'-a), (b'-b)\}$  
 and by 
$$\pi^{(a,b)}_{\{a,b)\}}=\pi^{(a,b)}_{\{a,b\}}(r): \mathbb F_r(a,b)\to \hat \delta_r(a,b)$$  the composition $\xymatrix{ \mathbb F_r(a,b)\ar[r]^{\pi_{\{a,b\}}^{B(a,b;\epsilon)}} &\mathbb F_r(B(a,b;\epsilon))\ar[r]^-{\pi^{(a,b)}_{B(a,b;\epsilon)}} &\hat \delta_r(a,b).}$  
Both compositions are surjective maps independent on $\epsilon.$

One has 
\begin{equation}\label {E7}
\boxed{\hat\delta_r(a, b)= \frac{\mathbb I_{a}(r)\cap \mathbb I^b(r)}{\mathbb I_{<a}(r)\cap \mathbb I^{b}(r)  +\mathbb I_a(r)\cap \mathbb I^{>b}(r)}}
\end{equation}

Define $$\mathbb F_r((a',a]\times b):=\varinjlim_{\epsilon\to 0} \mathbb F_r((a',a]\times [b, b+\epsilon)),$$
w.r. to the surjective maps 
$\pi_{(a',a]\times [b, b+\epsilon)}^{(a',a]\times [b,b+\epsilon')}(r): \mathbb F_r((a',a]\times [b, b+\epsilon))\to \mathbb F_r((a',a]\times [b, b+\epsilon')), \epsilon >\epsilon'$
and denote by $$\pi_B^{(a',a]\times b}= \pi_B^{(a',a]\times b}(r): \mathbb F_r(B)\to \mathbb F_r((a',a]\times b))$$ 
and 
$$\pi^{(a,b)}_ {(a',a]\times b}= \pi^{(a,b)}_ {(a',a]\times b}(r)=  \mathbb F_r((a',a]\times b))\to \hat\delta_r(a,b)$$ 
 the canonical surjective maps induced by passing to limit when $\epsilon  \to 0$.

One has 
\begin{equation}\label {E8}
\boxed{\mathbb F_r((a',a]\times b)= \frac{\mathbb I_{a}(r)\cap \mathbb I^b(r)}{\mathbb I_{a'}(r)\cap \mathbb I^{b}(r)  +\mathbb I_a(r)\cap \mathbb I^{>b}(r)}.}
\end{equation}
 
Define $$\mathbb F_r(a\times [b,b')):=\varinjlim_{\epsilon\to 0} \mathbb F_r((a-\epsilon,a]\times [b, b'))$$
w.r. to the surjective maps 
$\pi_{(a- \epsilon ,a]\times [b,b')}^{(a-\epsilon' ,a]\times [b,b')}(r): \mathbb F_r((a-\epsilon,a]\times [b, b'))\to \mathbb F_r((a-\epsilon',a]\times [b, b')), \epsilon >\epsilon',$
and denote by  
$$\pi_B^{a\times [b,b')}= \pi_B^{a\times [b,b')}(r): \mathbb F_r(B)\to \mathbb F_r(a\times [b,b'))$$ and 
$$\pi^{a,b}_{a\times [b,b')}= \pi^{a,b}_{a\times [b,b')}(r): \mathbb F_r(a\times [b,b'))\to \hat \delta_r(a,b)$$ the canonical surjective maps 
induced by passing to limit when $\epsilon \to 0.$
 
 One has 
 \begin{equation}\label {E9}
 \boxed{\mathbb F_r(a\times [b,b'))= \frac{\mathbb I_{a}(r)\cap \mathbb I^b(r)}{\mathbb I_{<a}(r)\cap \mathbb I^{b}(r)  +\mathbb I_a(r)\cap \mathbb I^{b'}(r)}.}\end{equation}

 Define 
\begin{equation}\label{E10}
\mathbb F_r(\mathbb R\times b)):=\mathbb I^b(r)/ \mathbb I^{>b}(r)
= \cup_{a\in \mathbb R} \mathbb F_r((-\infty,a]\times b)
\end{equation} 
and

\begin{equation}\label {E11}\mathbb F_r(a\times \mathbb R):=\mathbb I_a(r) /\mathbb I_{<a}(r)
=\cup_{b\in \mathbb R} \mathbb F_r(a\times [b, \infty)) 
\end{equation}
with the inclusions  
$\xymatrix  {\mathbb F_r((-\infty,a']\times b)\ar[r]^{\subseteq} &\mathbb F_r((-\infty,a]\times b)}$ for  $a' < a$
and 
\newline $\xymatrix{\mathbb F_r(a\times [b', \infty))\ar[r]^{\subseteq} &\mathbb F_r(a\times [b, \infty))}$ for  $b' >b$
induced by the linear injective maps $i_B'^B$ cf.  Proposition 2 item 1.

Note  that:
\begin{enumerate}
 \item     $\hat\delta_r (a,b)\ne 0$ implies $a,b\in CR_r(f), \  $
 \item     $\mathbb F_r(a\times \cdots )\ne 0$ implies $a\in CR_r(f),\ $  
 \item     $\mathbb F_r(\cdots \times b )\ne 0$ implies $b\in CR_r(f).$
\item      $\varinjlim_{\epsilon\to 0} \mathbb F_r((a -\epsilon,a]\times b)= \hat\delta_r(a,b),$
\item      $\varinjlim_{\epsilon \to 0} \mathbb F_r(a\times [b,b+\epsilon))= \hat\delta_r(a,b),$
\item      $\varinjlim _{\epsilon, \epsilon'\to 0} \mathbb F_r(a-\epsilon, a]\times [b, b+\epsilon'))= \hat \delta_r(a,b)$.
\end{enumerate}
\vskip .1in
The above definitions  combined with Proposition \ref{P41}  leads to the following proposition.  
\begin{proposition}\label {P43}\ 

\begin{enumerate}
\item
For $-\infty \leq a' <a''< a, \  b\in \mathbb R$ the   
sequence 
\begin{equation}\label {14}
\xymatrix{ 0\ar[r]&\mathbb F_r ((a', a'']\times b))\ar[r]^i &\mathbb F_r((a',a]\times b))\ar[r]^\pi&\mathbb F_r((a'',a]\times b))\ar[r]&0}
\end{equation} 
is exact and for $a\in \mathbb R,  b<b'' <b'\leq \infty $ the  
sequence 
\begin{equation}\label {E15}
\xymatrix{ 0\ar[r]&\mathbb F_r (a\times [b'',b'))\ar[r]^i &\mathbb F_r(a\times [b,b'))\ar[r]^\pi&\mathbb F_r(a\times [b,b'')))\ar[r]&0}
\end{equation} 
is  exact. 

In both sequences  $i$ and $\pi$ are the linear maps induced by the injective linear maps $i_{B'} ^B$ and the surjective linear maps $\pi_B^{B''}.$
\item   
\begin{enumerate}
 \item For any $a\in \mathbb R$ and $ b <b'\leq \infty$ 
  \begin{equation*}
 \begin{aligned}
 \dim \mathbb F_r(a\times  [b,b'))\leq \dim (\mathbb I_a(r)\cap \mathbb I^b(r) /  \mathbb I_{<a}(r)\cap \mathbb I^b(r)) \leq  
  &\dim (\mathbb I_a(r)/  \mathbb I_{<a}(r))\\
   \leq &\dim  H_r(X _{a}, X _{<a})\\
 \end{aligned}
  \end{equation*}  
 and when $a$  is a regular value $\dim \mathbb F_r(a\times  [b,b'))=dim \mathbb F_r(a\times  \mathbb R)= 0.$
 \item For any $b\in \mathbb R$ and $-\infty \leq a' < a $
   \begin{equation*}
 \begin{aligned}
\mathbb F_r((a', a] \times  b)\leq \dim (\mathbb I_a(r)\cap \mathbb I^b(r) /  \mathbb I_a(r)\cap \mathbb I^{>b}(r)) \leq   \dim (\mathbb I^b(r)/  \mathbb I^{>b}(r)) \\
\leq \dim  H_r(X ^b, X ^{>b})\\
 \end{aligned}
  \end{equation*} 
  and when $b$  is a regular value $dim \mathbb F_r((a, a] \times  b)=dim \mathbb F_r(\mathbb R\times b))= 0.$
\end{enumerate}
 The inequalities above hold in view of (\ref{E10} ) and (\ref {E11}) for $[b,b')$ replaced by $[b,\infty)$ or $\mathbb R$ and $(a', a] $ replaced by $(-\infty,a]$ or $\mathbb R$  respectively.

\begin{enumerate} 
\item If either $a$ or $b$ are regular values then $\hat \delta_r(a,b)=0.$ 
\item For any $a\in \mathbb R$  the set $\supp \delta^f_r\cap (a\times \mathbb R)$  is finite  and if $a$ regular value is empty.
\item For any $b\in \mathbb R$  the set $\supp \delta^f_r\cap (\mathbb R\times b)$  is finite  and if $b$ regular value is empty.
\end{enumerate}
\end{enumerate}
\end{proposition}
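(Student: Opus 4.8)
\textbf{Proof plan for Proposition \ref{P43}.}

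The plan is to derive everything from the exactness statements of Proposition \ref{P41} and the explicit formulas (\ref{E7})--(\ref{E11}) for the various $\mathbb F_r$-spaces, together with the weak tameness hypotheses (1)--(3) in Definition \ref{D2.6}. Item 1 is essentially a direct application of Proposition \ref{P41}: the two short exact sequences there, specialized to the degenerate boxes $(a',a]\times b$ and $a\times[b,b')$ obtained by passing to the limit $\epsilon\to 0$, give the claimed sequences (\ref{14}) and (\ref{E15}). The only thing to check is that exactness survives the direct limit over $\epsilon$; since direct limits of exact sequences of vector spaces are exact, and the transition maps are the $i_{B'}^B$ and $\pi_B^{B''}$ which are respectively injective and surjective, this is immediate. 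I would simply point to Proposition \ref{P41} and the limit formulas.

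For item 2 the strategy is a chain of quotient inclusions read off from (\ref{E8}), (\ref{E9}), (\ref{E10}), (\ref{E11}). For part (a): from the boxed formula (\ref{E9}) the space $\mathbb F_r(a\times[b,b'))$ is a quotient of $\mathbb I_a(r)\cap\mathbb I^b(r)$, and the denominator contains $\mathbb I_{<a}(r)\cap\mathbb I^b(r)$, giving the first inequality; the second and third follow because $\mathbb I_a(r)\cap\mathbb I^b(r)\subseteq\mathbb I_a(r)$ while $\mathbb I_{<a}(r)\cap\mathbb I^b(r)\subseteq\mathbb I_{<a}(r)$, so there is a surjection onto $\mathbb I_a(r)/\mathbb I_{<a}(r)$, and finally $\mathbb I_a(r)/\mathbb I_{<a}(r)$ is a quotient of $H_r(X_a,X_{<a})$ via the long exact sequence of the pair (this is essentially the decomposition (\ref{E7}), which exhibits $\mathbb I_a(r)/\mathbb I_{<a}(r)$ as one summand of $H_r(X_a^f,X_{<a}^f)$). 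The regular-value vanishing is then immediate: if $a$ is regular then $H_r(X_a,X_{<a})$, i.e. $R^f_a(r)$, vanishes, so the top of the chain is zero and hence so is everything below it; and $\mathbb F_r(a\times\mathbb R)=\mathbb I_a(r)/\mathbb I_{<a}(r)$ vanishes as well by the same reasoning. Part (b) is the mirror-image argument using (\ref{E8}) and (\ref{E10}) with $\mathbb I^b$, $\mathbb I^{>b}$ in place of $\mathbb I_a$, $\mathbb I_{<a}$, invoking $R_f^b(r)$ for the regular-value case. The remark about replacing $[b,b')$ by $[b,\infty)$ or $\mathbb R$ follows from (\ref{E10}), (\ref{E11}) which express these as unions/limits of the finite-$b'$ (resp.\ $a'$) versions, so the same upper bounds persist.

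For item 3: part (a) is just (\ref{E7}) (the boxed formula for $\hat\delta_r(a,b)$) combined with item 2 — $\hat\delta_r(a,b)$ is a further quotient of $\mathbb F_r(a\times[b,b'))$ (indeed $\hat\delta_r(a,b)=\varinjlim_{\epsilon'\to 0}\mathbb F_r(a\times[b,b+\epsilon'))$), hence is dominated by $\dim H_r(X_a,X_{<a})$ and symmetrically by $\dim H_r(X^b,X^{>b})$, and so vanishes if either $a$ or $b$ is regular. Parts (b) and (c) are the finiteness statements, and this is the one genuinely nontrivial point. Fix $a$; I want to bound $\sum_b \dim\hat\delta_r(a,b)$. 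The natural approach is to use the exact sequence (\ref{E15}) iterated over finitely many distinct values $b=b_1<\cdots<b_m$ lying in $CR_r(f)$ together with $\hat\delta_r(a,b_j)$ as a subquotient of $\mathbb F_r(a\times\mathbb R)=\mathbb I_a(r)/\mathbb I_{<a}(r)$: a telescoping/additivity argument over a chain of boxes $a\times[b_1,b_2), a\times[b_2,b_3),\ldots$ shows $\sum_{j}\dim\mathbb F_r(a\times[b_j,b_{j+1}))\le\dim\mathbb F_r(a\times[b_1,b_m))\le\dim(\mathbb I_a(r)/\mathbb I_{<a}(r))\le\dim H_r(X_a,X_{<a})<\infty$ by weak tameness condition (2), and since $\hat\delta_r(a,b_j)$ is a quotient of $\mathbb F_r(a\times[b_j,b_{j+1}))$ for $b_{j+1}$ close enough to $b_j$, this bounds $\sum_b\dim\hat\delta_r(a,b)$ by the same finite number. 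Emptiness for $a$ regular is immediate from part (a). Part (c) is symmetric with the roles of $a$ and $b$ interchanged, using (\ref{14}) and $\mathbb F_r(\mathbb R\times b)=\mathbb I^b(r)/\mathbb I^{>b}(r)$. The main obstacle is organizing this telescoping additivity cleanly — one must be a little careful that the subquotients $\hat\delta_r(a,b_j)$ for different $j$ occupy "disjoint" pieces of the filtration of $\mathbb I_a(r)/\mathbb I_{<a}(r)$, which is exactly what the exact sequences in item 1 encode, so the bookkeeping reduces to repeated use of $\dim\mathbb F_r(B)=\dim\mathbb F_r(B')+\dim\mathbb F_r(B'')$ for $B=B'\sqcup B''$.
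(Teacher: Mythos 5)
Your plan matches the paper's (largely implicit) argument: the paper states Proposition \ref{P43} as a direct consequence of Proposition \ref{P41} together with the limit formulas (\ref{E7})--(\ref{E11}), and the finiteness in item 3 is exactly the telescoping/filtration argument that the paper spells out later in the proof of Proposition \ref{P45} (via the chains $\mathbb S(a,b)$ and $\mathbb U(a,b)$). One small correction in item 2(a): the map $(\mathbb I_a(r)\cap\mathbb I^b(r))/(\mathbb I_{<a}(r)\cap\mathbb I^b(r))\to\mathbb I_a(r)/\mathbb I_{<a}(r)$ you need is \emph{injective} (its kernel is represented by $\mathbb I_{<a}(r)\cap\mathbb I^b(r)$), not a surjection onto $\mathbb I_a(r)/\mathbb I_{<a}(r)$ as written --- a surjection would give the inequality in the wrong direction; with that one-line fix the chain of bounds, and its mirror image in 2(b), goes through.
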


The relation between the surjective linear maps $\pi_{\cdots}^{\cdots}$ is summarized by   the following commutative diagram with  $B= (a',a]\times [b, b').$

$$\xymatrix{&&\mathbb F_r((a',a]\times b )\ar[rd]^{\pi_{(a'b[\times b}^{(a,b)}}&\\
 \mathbb F_r(a,b) \ar@/^6pc/[rrr]^{\pi^{(a,b)}_{\{a,b\}}} \ar@/^1pc/[rru]^{\pi^{(a',a]\times b}_{\{a,b\}}} \ar[rrd]_{\pi^{a\times [b,b')}_{\{a,b\}}} \ar[r]^-{\pi_{\{a,b\}}^B}
&\mathbb F_r(B)\ar[ru]_{\pi_B^{(a',a]\times b}}\ar[rd]^{\pi_B^{a\times [b,b')}}\ar[rr]^{\pi_B^{(a,b)}}&&\hat\delta_r(a,b)\\
&&\mathbb F_r(a\times [b,b'))\ar[ru]_{\pi_{a\times[b,b')}^{(a,b)}}&.}
$$

\begin{definition} For $(a,b)\in CR_r(f)\times C_r(f)$ a splitting  
$$i_{(a,b)}(r): \hat\delta_r(a,b)\to \mathbb F_r(a,b)\subset H_r(X)$$ 
is a right inverse of the canonical projection  $\pi_{\{a,b\}}^{(a,b)}(r): \mathbb F_r(a,b)\to \hat \delta^f_r(a,b),$\  i.e. $ \pi_{\{a,b\}}^{(a,b)}(r)\cdot i_{(a,b)}(r)= id.$  
 \end{definition}

\medskip 
\vskip .2in 

For $-\infty \leq a' <a ,\ b<b' \leq \infty$ let 
$K$ be either  one of  the following sets:
\begin{enumerate}
\item a bounded or unbounded box $B= (a',a]\times [b, b'),$  
\item a bounded or unbounded  horizontal open-closed interval $I= (a',a]\times b,$  
\item  a bounded or unbounded  closed-open vertical interval $J= a\times [b,b'),$      
\item $a\times \mathbb R,$  
\item $\mathbb R\times b,$ 
\item $\mathbb R^2.$
\end{enumerate}
Call $(a,b)\in \mathbb R^2$   the {\it relevant corner} in case 1.  and  the {\it relevant end} in case 2. or 3. 
The interval $(a',a'']\times b,$ when viewed as a subinterval of $(a', a]\times b,$  is called {\it left (open-closed) subinterval} and  $a\times [b'',b'),$ when viewed as a subinterval of $a\times [b,b'),$ is called {\it upper (closed-open) subinterval}.
\vskip .2in 

For $(a,b)\in K$  a  splitting $i_{(a,b)}(r)$   provides the injective linear map $$i_{(a,b)}^K(r):\hat\delta^f_r(a,b)\to \mathbb F_r(K)$$  defined as follows.  

--  If $(a,b)$ is the relevant corner or the relevant end then $i^K_{(a,b)}(r)$ is  the  composition 
$$i_{(a,b)}^K:= \pi^K_{(a,b)}(r)\cdot i_{(a,b)}(r)$$  with $\pi^K_{(a,b)}(r): \mathbb F_r(a,b)\to \mathbb F_r(K)$  the canonical projection.

--  If $(a,b)$ is not {\it relevant} corner or end then:   

a)  in the case 1., 2., 3. one defines 
$$i_{(a,b)}^K(r): = i_{K'}^K(r) \cdot i_{a,b}^{K'}(r)$$ with  $K'\subset K,$  the only upper-left box,  resp. left subinterval, resp. upper subinterval having $(a,b)$ as the relevant corner resp. end. 
 
 b) in the case 4., 5., 6., one defines $i_{(a,b)}^K(r)$ as the direct limit of $i_{(a,b)}^{K'}(r)$ where $K'$ runs among the subsets of $K$  of the same type and located as upper left box resp. left interval resp. upper interval  which make $i_{K'}^K(r)$ injective. 
\vskip  .1in
Choose a collection of splittings $\mathcal S:=\{i_{(a,b)}(r) \mid (a,b)\in CR(f)\times CR(f),  r\in \mathbb Z_{\geq 0}\}.$ Let  $K$ be a set as in cases in 1., 2., 3., 4., 6. above.  
 and $A\subseteq CR(f)\times CR(f).$ 
 Denote  
$$^{\mathcal S} I_A(r) := \oplus _{\alpha, \beta\in A} \ \   i_{(\alpha, \beta)}(r): \oplus_{(\alpha, \beta)\in A}  \hat\delta_r(\alpha, \beta)\to H_r(X),$$
$$^{\mathcal S} I^K_{A\cap K}(r): = \oplus _{(\alpha, \beta)\in A\cap K} \ \ i^K_{(\alpha, \beta)}(r): \oplus_{(\alpha, \beta)\in A}  \hat\delta_r(\alpha, \beta)\to \mathbb F_r(K).$$

\begin{proposition} \label {P45} \ 

For any choice of $\mathcal S$  the following holds:

\begin{enumerate}
\item The maps $^{\mathcal S}I(r)$ and $^{\mathcal S}I^K_{A\cap K}(r)$ are injective. 
\item If $A= CR(f)\times CR(f)$ and $K$ is in either case 2., 3.. 4., or 5. then 
\newline $^{\mathcal S}I^K_{K}(r):=  ^{\mathcal S}I^K_{ A\cap K}(r)$ is an isomorphism.
\end{enumerate}
\end{proposition}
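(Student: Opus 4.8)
The plan is to prove both items by reducing everything to the one-variable picture using the exact sequences already assembled in Propositions \ref{P41} and \ref{P43}, and then bootstrapping to the infinite sets $K$ of cases 4.--6. via direct limits. For item 1, I would first treat a single box $B = (a',a]\times[b,b')$, where only finitely many pairs $(\alpha,\beta)\in A$ can have nonzero $\hat\delta_r(\alpha,\beta)$ and fall inside $B$ (since they must be critical values of $f$ in the relevant finite intervals, by the remarks after \eqref{E11} and Proposition \ref{P43}.3). Order these pairs so that the ``upper-left to lower-right'' filtration of $B$ by sub-boxes is compatible with the lattice of injective/surjective maps in Proposition \ref{P41}.2. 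The injectivity of $^{\mathcal S}I^B_{A\cap B}(r)$ then follows by induction on the number of such pairs: peel off the box $B'$ in the extreme upper-left corner containing exactly one support point $(\alpha_0,\beta_0)$, use the short exact sequence $0\to \mathbb F_r(B')\to \mathbb F_r(B)\to \mathbb F_r(B'')\to 0$ of Proposition \ref{P41}.1, observe that $i^{B}_{(\alpha_0,\beta_0)}$ lands in the subobject $\mathbb F_r(B')$ where it is a split injection onto $\hat\delta_r(\alpha_0,\beta_0)$ (this is exactly the splitting condition together with $i^{B'}_{(\alpha_0,\beta_0)}$ being the canonical projection composed with the splitting, which is injective because $\pi^{(\alpha_0,\beta_0)}_{(\alpha_0,\beta_0)}\cdot i_{(\alpha_0,\beta_0)}=\mathrm{id}$), and that the remaining pairs map injectively into $\mathbb F_r(B'')$ by the inductive hypothesis; a diagram chase closes the induction. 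Cases 2.\ and 3.\ (intervals) are the degenerate one-dimensional analogues using the exact sequences \eqref{14} and \eqref{E15}, and case 6.\ ($K=\mathbb R^2$) and $^{\mathcal S}I(r)$ itself follow by passing to the direct limit over exhausting boxes, using that a filtered colimit of injections is an injection.

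**Proof of item 2.** For the isomorphism claim I would concentrate on case 5., $K=\mathbb R\times b$, the others being symmetric. Here the point is a dimension count: by \eqref{E10}, $\mathbb F_r(\mathbb R\times b)=\mathbb I^b(r)/\mathbb I^{>b}(r)$, and I want to show $\bigoplus_{(\alpha,b)\in A\cap K}\hat\delta_r(\alpha,b)$ has the same (finite) dimension. The key is the exact sequence \eqref{14}: filtering $\mathbb R\times b$ by the open-closed intervals $(-\infty,a]\times b$ as $a$ ranges over the finitely many critical values $\alpha_1<\dots<\alpha_m$ with $\hat\delta_r(\alpha_i,b)\neq 0$, successive quotients are exactly $\mathbb F_r((\alpha_{i-1},\alpha_i]\times b)$, and by \eqref{E7} and \eqref{E8} the further quotient $\mathbb F_r((\alpha_{i-1},\alpha_i]\times b)$ surjects onto $\hat\delta_r(\alpha_i,b)=\varinjlim_{\epsilon}\mathbb F_r((\alpha_i-\epsilon,\alpha_i]\times b)$ with the kernel picking up contributions only from $(\alpha_{i-1},\alpha_i-\epsilon]\times b$, which have no support in the limit. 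Thus the associated graded of the filtration of $\mathbb F_r(\mathbb R\times b)$ is precisely $\bigoplus_i \hat\delta_r(\alpha_i,b)$, so the two sides have equal dimension; combined with injectivity from item 1, $^{\mathcal S}I^K_K(r)$ is an isomorphism. Case 4.\ is the transpose argument with \eqref{E15} and \eqref{E11}; cases 2.\ and 3.\ are the single-interval versions of the same count.

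**Main obstacle.** The delicate point is not the homological algebra of any single exact sequence but the bookkeeping in case 2 (and its variants): one must verify that the associated-graded identification is \emph{exactly} the direct sum of the $\hat\delta_r$'s with no leftover terms. Concretely, the quotient $\mathbb F_r((a',a]\times b)$ is in general strictly larger than $\hat\delta_r(a,b)$ — it differs by $\mathbb I_{a'}(r)\cap\mathbb I^b(r)$ inside the denominator versus $\mathbb I_{<a}(r)\cap\mathbb I^b(r)$ — so one has to be careful that as the intervals $(\alpha_{i-1},\alpha_i]\times b$ shrink down toward $\{\alpha_i\}\times b$ these discrepancies telescope away and the colimit identifications in the diagram after Proposition \ref{P43} are compatible with the filtration quotients. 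I would handle this by choosing the endpoints $\alpha_{i-1}$ themselves to be consecutive elements of the (finite, in $r$-degree) support, so that between them there are no critical values and $\mathbb F_r((\alpha_{i-1},\alpha_i)\times b)=0$ by the regular-value vanishing in Proposition \ref{P43}.2(b)--(c); then $\mathbb F_r((\alpha_{i-1},\alpha_i]\times b)=\hat\delta_r(\alpha_i,b)$ on the nose, and the filtration argument becomes clean. Getting this reduction precisely right — in particular that finitely many critical values suffice in a fixed homological degree even though $CR(f)$ may only be countable — is where the weak tameness hypothesis (condition 2 of Definition \ref{D2.6}) does its real work, and that is the step I would write out most carefully.
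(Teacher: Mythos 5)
Your proposal is correct and follows essentially the same route as the paper: item 1 by induction on the cardinality of $A\cap K$ using the short exact sequences of Proposition \ref{P41} to split $K$ into two pieces and conclude injectivity of the middle vertical arrow, and item 2 by filtering $\mathbb F_r(a\times\mathbb R)$ (resp.\ $\mathbb F_r(\mathbb R\times b)$), identifying the finitely many jumps with the $\hat\delta_r$'s as associated graded pieces, and closing with the dimension count against the injectivity from item 1. The ``main obstacle'' you flag is exactly the point the paper handles via the observation that $\mathbb S(a,b')=\mathbb S(a,b_i)$ for $b'\in(b_{i-1},b_i]$, so no discrepancy remains.
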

\begin{proof}\
 
Item 1:
It suffices to verify  the statement for $A$ a finite set. This is done  by induction on cardinality of $A$ as follows. 
When $\sharp A=1$  this follows from the fact that $i_r(\alpha, \beta)$ is a splitting.
When $\sharp (K\cap A) \geq 2 $  one can  write  $K=K_1\sqcup K_2$ with 
 $\sharp (K_i\cap A) <\sharp (K\cap A).$
In view of the definition of $^{\mathcal  S} I^{\cdots} _{\cdots}$the following  diagram is commutative.
$$\xymatrix {0\ar[r]&\mathbb F_r(K_1)\ar[r]^{i^K_{K_1}}&\mathbb F_r(K)\ar[r]^{\pi_K^{K_2}}&\mathbb F_r(K_2)\ar[r] &0\\
0\ar[r]&\oplus_{\alpha. \beta \in A\cap K_1} \hat\delta(\alpha, \beta)\ar[r]\ar[u]_{I_{A\cap K_1}(r)} &\oplus_{\alpha. \beta \in A\cap K} \hat\delta(\alpha, \beta)\ar[r]\ar[u] _{I_{A\cap K}(r)}&\oplus_{\alpha. \beta \in A\cap K_2} \hat\delta(\alpha, \beta)\ar[r]\ar[u]_{I_{A\cap K_2}(r)} &  0
}$$
Since  both raws are short exact sequence with the  left and the right vertical arrows injective  the mid vertical arrow is injective too. 

Item 2: 
In view of the tameness  property of $f,$  if 
 $a\notin CR(f)$ then $\mathbb F_r(a\times \mathbb R)= 0.$ 
If $a\in CR(f)$ then $\mathbb F_r(a\times \mathbb R)= \mathbb I_a(r)/ \mathbb I_{<a}(r)$ in view of (\ref{E11}) and by Proposition \ref{P43} item 2. is of finite dimension.   Denote by $\mathbb S(a,b)= \mathbb I_a  \cap \mathbb I^b/ \mathbb I_{<a}\cap \mathbb I^b$ and observe that 
$$\mathbb F_r(a\times \mathbb R)=\mathbb S(a,-\infty) \supseteq \mathbb S(a, b) \supseteq \mathbb S(a, b')\supseteq \mathbb S(a, \infty)=0$$  for  $b<b'.$
The finite dimensionality of $\mathbb F(a\times \mathbb R)$ implies the existence of a finite collection of critical values $b_1 < b_2 <\cdots b_N$ s.t.
$\mathbb S(a,-\infty)= \mathbb S(a,b_1) \supset \mathbb S(a,b_1)\supset \cdots\supset  \mathbb S(a,b_{N-1})\supset \mathbb S(a,b_N)\supset 0$ and   
in view of the definition of $\mathbb S(a,b)$ one has  $\mathbb S(a,b')= \mathbb S(a,b_i)$ provided $b' \in (b_{i-1}, b_i].$ 
This implies that $\hat \delta( a, b_i)= \mathbb S(a, b_i)/ \mathbb S(a, b_{i+1})$ and therefore  
$$\dim \mathbb F_r(a\times \mathbb R)= \sum _{i=1,2, \cdots N} \dim \hat \delta_r(a, b_i) = \sum _{t\in \mathbb R} \dim \hat \delta_r(a, t).  $$  
 
This   implies that  $^{\mathcal S} I^K_{A\cap K}(r)$is an isomorphism and $$\supp\  \hat  \delta_r\cap a\times \mathbb R= \{(a, b_1), (a,b_2),\cdots (a, b_{N})\}.$$

If $b\notin CR(f)$ then $\mathbb F(\mathbb R\times b)= 0$ 
and if $b\in CR(f)$ then by (\ref {E10}), 
\newline $\mathbb F_r(\mathbb R\times b)= \mathbb I^b (r)/ \mathbb I^{>b}(r)$ and by  Proposition \ref{P43} item 2. of finite dimension.   
Denote by $\mathbb U(a,b)= \mathbb I_a  \cap \mathbb I^b/ \mathbb I_{a}\cap \mathbb I^{>b}$ and observe that 
$$\mathbb F_r( \mathbb R\times b)=\mathbb U(\infty, b) \supseteq \mathbb U(a', b) \supseteq \mathbb U(a, b)\supseteq \mathbb U( -\infty, b)=0.$$ 
for $a' >a .$ The finite dimensionality of $\mathbb F_r(\mathbb R\times b)$ implies the existence of a finite collection of critical values $a_1 >a_2 >\cdots >a_N$  s.t.
\ $\mathbb U(\infty,b)=\mathbb U(a_1, b)\supset \mathbb U(a_2,b)\supset \cdots \supset \mathbb U(a_{N-1},b)\supset \mathbb U(a_N,b)\supset 0$ and in view of the definition of $\mathbb U(a,b)$   
one has $\mathbb U(a',b)= \mathbb U(a_i,b)$ if $a' \in [a_{i+1}, a_i).$ 
Therefore  
$$\dim \mathbb F_r(\mathbb R\times b)= \sum _{i=1,2, \cdots N} \dim \hat \delta_r(a_i, b)=  \sum _{t\in \mathbb R} \dim \hat \delta_r(t, b).$$  This  implies that $^{\mathcal S} I^K_{A\cap K}(r)$  is an isomorphism and 
$$\supp \hat  \delta_r\cap a\times \mathbb R= \{(a_1, b), (a_2, b),\cdots (a_N, b)\}.$$
\end{proof}
\vskip .2in
Define 
$$\boxed{\delta^f_r(a,b):=\dim \hat \delta^f_r(a,b).}$$  
As a  consequence of Proposition \ref {P45}  
for any $a\in \mathbb R$ the set $\supp \ \delta^f_r \cap  (a\times \mathbb R)$ is finite and  of total cardinality
\begin{equation}\label {E14}
\dim \mathbb I^f_a(r)/ \mathbb I^f_{<a}(r)= \sum_{(a,x)\in \supp \delta^f_r \cap (a\times \mathbb R)} \delta^f_r(a,x)
\end{equation} 
 hence  equal to zero  when $a$ is a regular value.
Similarly, for any $b \in \mathbb R$ the set $\supp \ \delta^f_r \cap  (\mathbb R \times b)$ is  finite  of total cardinality
\begin{equation}\label {E15}
\dim \mathbb I_f^b(r)/ \mathbb I_f^{>b}(r)= \sum_{(x,b)\in \supp \delta^f_r \cap (\mathbb R \times b)} \delta^f_r(x,b)
\end{equation} \label {E16}
hence  equal to zero  when $b$ is a regular value.

\subsection{The assignments $\hat \gamma_r^f$ and $\gamma^{ f}_r.$ } \label {SS4}

Call 
{\it box above diagonal}, abbreviated {\it ad-box},   a subset $B\subset \mathbb R^2_{+}= \{(x,y)\mid x<y\}$  of the form $B=(a',a]\times (b',b],$ with  $a'<a \leq b'<b .$ 
As in the previous subsection let  $f: X\to \mathbb R$ be a fixed tame map.  
Since $f$ is fixed the decoration "$f$"  will not appear  the notations below. 

For $a'<a \leq b' <b $ the   inclusions $X^f_{a'}\subset X^f_a \subseteq X^f_{b'}\subset X^f_b$   induce in homology   
the commutative cartesian diagram 
\begin{equation*}
\xymatrix{\mathbb T_r(a',b)\ar[r]^{i_{a'}^a(r)} &\mathbb T_r(a,b)\\ \mathbb T_r(a',b')\ar[r]
 \ar[u]^{\subseteq}\ar[ru]^{u(r)} &\mathbb T_r(a,b')\ar[u]^{\subseteq} .}
\end{equation*}
with the property that 
 $\img \ u(r)= \img \ i_{a'}^a(r) \cap \mathbb T_r(a,b').$   Here  $i_{a'}^a(r): \mathbb T_r(a',b)\to \mathbb T_r(a,b)$ is the restriction of $i_{a'}^a(r):  H_r( X_{a'})\to H_r(X_b).$   In order to avoid heavy notation, when implicit from the context we will simply write $i(r)$ instead of $i_{a'}^a (r).$
\vskip .1in 
Define  
$$\boxed{\mathbb T_r(B):=
\mathbb T_r(a,b)/ (i^a_{a'}(r) (\mathbb T_r(a',b))+ \mathbb T_r(a,b').}$$
Suppose that $B_1, B_2, B$ are three ad-boxes with $B_1\sqcup B_2= B$ in either one of the two  relative positions: 
\newline    $B_1$ the left side ad-box  and $B_2$ the right side ad-box (for example $B= B_{\cdot 2}, B_1= B_{12}, B_2=B_{22}$ in Figure 2),
\newline    $B_1$ the down side ad-box and $B_2$ the upper side ad-box (for example $B= B_{1 \cdot}, B_1= B_{11}, B_2=B_{12}$ in Figure 2).

The inclusion $B_1\subseteq B$  induces the  {\bf injective} linear map $ i_{B_1}^B(r): \mathbb T_r(B_1)\to \mathbb T_r(B)$ and the inclusion $B_2\subset B$ the {\bf surjective} linear map 
$ \pi_{B}^{B_2}(r): \mathbb T_r(B)\to \mathbb T_r(B_2).$ 

One still call {\it ad-box}  the set $(-\infty, a]\times (b', b], \ a\leq b',$ and define 
$$\boxed{\mathbb T_r((-\infty,a]\times (b',b]):=  \underset{a' \to -\infty} \varprojlim \mathbb T_r(a,b)/ (i_{a'}^a(r) (\mathbb T_r(a',b))}$$
For $-\infty\leq a''\leq a'< a; \  b''\leq b' < b$  one considers the ad-boxes  
\begin{equation*}
\begin{aligned}
B_{11}:=&(a'',a']\times (b'',b'], &\ 
B_{12}:=& (a'',a']\times (b',b],\\
B_{21}:=&(a',a]\times (b'',b'],&\ 
B_{22}:=&(a',a]\times ('b,b],\\
\end{aligned}
\end{equation*}
\begin{equation*}
\begin{aligned}
B_{1\cdot}:= B_{11}\sqcup B_{12}, \ &\ 
B_{\cdot 1}:=B_{11}\sqcup B_{21} \\
B_{2\cdot}:= B_{12}\sqcup B_{22}, \ &\ 
B_{\cdot 2}:=B_{11}\sqcup B_{12}\\
B_{2\cdot}:= B_{21}\sqcup B_{22}, \ &\
B \ \ :=B_{1\cdot}\sqcup B_{2\cdot}= B_{\cdot 1}\sqcup B_{\cdot 2}
\end{aligned}
\end{equation*}

\hskip 1 in 
\begin{tikzpicture} [scale=1.2]
\draw [<-]  (3,4) -- (3,3) -- (3,1);
\draw [->]  (0,1.5) -- (3,1.5) -- (5,1.5);

\node at (-.2,3.5) {$b$};
\node at (-0.2,3) {$b'$};
\node at (-0.2,2.5) {$b''$};
\node at (1,1.3) {$a''$};
\node at (2.3,1.3) {$a'$};
\node at (4,1.3) {$a$};
\draw  [dashed, ultra thick] (1,2.5) -- (4,2.5);
\draw [line width=0.10cm] (4,2.5) -- (4,3.5);
\draw [dashed, ultra thick] (1,2.5) -- (1,3.5);
\draw [line width=0.10cm] (1,3.5) -- (4,3.5);
\draw [line width=0.10cm] (1,3) -- (4,3);
\draw [line width=0.10cm] (2.3,2.5) -- (2.3,3.5);
\node at (2,2.7) {$B_{11}$};
\node at (2,3.2) {$B_{12}$};
\node at (3,2.7) {$B_{21}$};
\node at (3,3.2) {$B_{22}$};
\node at(2.4, 0) 
{Figure 2
};
\end{tikzpicture}

By elementary but tedious arguments, for details see the Appendix below,    
one can show. 
\begin{proposition} \label{P447}\

1. If $B_1, B_2, B$ are ad boxes s.t. $B= B_1\sqcup B_2$ then the sequence 
\begin{equation}\label {E9}
\xymatrix{0\ar[r]&\mathbb T_r(B_1)\ar[r]^{i_{B_1}^B(r)} & \mathbb T_r(B) \ar[r]^{\pi_{B}^{B_2}(r)}& \mathbb T_r (B_2)\ar[r] &0}
\end{equation}
is exact.

2. If $B_{11}, B_{12}, B_{21}, B_{22}$ are ad-boxes as in Figure 2  then the diagram 
$$ 
\xymatrix {               &0\ar[d]             &0\ar[d]       &0\ar[d]\\
                 0 \ar[r]&\mathbb T_r(B_{11})\ar[r]^{i^{B_{1\cdot}}_{B_{11}}}\ar[d]&\mathbb T_r(B_{\cdot 1})\ar[r]^{\pi^{B_{12}}_{B_{1\cdot}}}\ar[d] &\mathbb T_r(B_{21})\ar[r] \ar[d]&0\\
                 0 \ar[r]&\mathbb T_r(B_{1\cdot})\ar[r]^{i^B_{B_{1\cdot}}}\ar[d]&\mathbb T_r(B)\ar[r]^{\pi^{B_{\cdot 2}}_{B}}\ar[d] &\mathbb T_r(B_{2\cdot })\ar[r]\ar[d] &0\\
                 0 \ar[r]&\mathbb T_r(B_{12})\ar[r]^{i^{B_{2\cdot}}_{B_{21}}}\ar[d]&\mathbb T_r(B_{\cdot 2})\ar[r]^{\pi^{B_{22}}_{B_{2\cdot}}}\ar[d] &\mathbb T_r(B_{22})\ar[r]\ar[d] &0\\
                                    &0                          &0                             &0}
$$
is commutative with  all rows and columns exact sequences.
\end{proposition}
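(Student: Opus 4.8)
The plan is to carry out, for the groups $\mathbb T_r(B)$, the verbatim analogue of the argument proving Proposition \ref{P41} for the groups $\mathbb F_r(B)$. The structural inputs are a handful of elementary facts about the kernels $\mathbb T_r(a,b)=\ker(i_a^b(r)\colon H_r(X_a)\to H_r(X_b))$: whenever $a\le c\le b$ the inclusion-induced map $i_a^b(r)$ factors as $i_c^b(r)\circ i_a^c(r)$ through $H_r(X_c)$, and from this one reads off the monotonicities $\mathbb T_r(a,c)\subseteq\mathbb T_r(a,b)$ for $c\le b$ and $i_a^c(r)(\mathbb T_r(a,b))\subseteq\mathbb T_r(c,b)$ for $a\le c\le b$, as well as the cartesian-square identity recorded before the statement, which in the form I shall use reads $i_{a'}^a(r)(\mathbb T_r(a',b))\cap\mathbb T_r(a,b'')=i_{a'}^a(r)(\mathbb T_r(a',b''))$ for $a'<a\le b''<b$ (its nontrivial inclusion is again the factoring property: $i_{a'}^a(r)(y)\in\mathbb T_r(a,b'')$ forces $y\in\mathbb T_r(a',b'')$). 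Throughout, for an ad-box $B=(a',a]\times(b',b]$ I abbreviate the ``left wall'' $P_B:=i_{a'}^a(r)(\mathbb T_r(a',b))$ and the ``bottom wall'' $Q_B:=\mathbb T_r(a,b')$, so that $\mathbb T_r(B)=\mathbb T_r(a,b)/(P_B+Q_B)$, with $P_B$ replaced by $\bigcap_{a''<a}i_{a''}^a(r)(\mathbb T_r(a'',b))$ when $a'=-\infty$.

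For Part 1, I would treat the two relative positions of the split $B=B_1\sqcup B_2$ separately. In the left/right position ($B_1=(a',a'']\times(b',b]$, $B_2=(a'',a]\times(b',b]$, $a'<a''<a\le b'<b$) functoriality and monotonicity give $P_B=i_{a'}^a(r)(\mathbb T_r(a',b))\subseteq i_{a''}^a(r)(\mathbb T_r(a'',b))=P_{B_2}$ and $Q_{B_2}=Q_B$, so $\pi_B^{B_2}(r)$ is a well-defined (hence surjective) quotient projection with $\ker\pi_B^{B_2}(r)=(P_{B_2}+Q_B)/(P_B+Q_B)$; likewise $i_{B_1}^B(r)$, induced by $i_{a''}^a(r)$, is well-defined with image $(i_{a''}^a(r)(\mathbb T_r(a'',b))+Q_B)/(P_B+Q_B)$, which equals $\ker\pi_B^{B_2}(r)$. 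Injectivity of $i_{B_1}^B(r)$ reduces, after subtracting a suitable element of $P_{B_1}$ from a representative, to the implication ``$i_{a''}^a(r)(w)\in\mathbb T_r(a,b')\Rightarrow w\in\mathbb T_r(a'',b')$'', i.e. the factoring property. The down/up position ($B_1=(a',a]\times(b',b'']$, $B_2=(a',a]\times(b'',b]$) is the mirror-image computation with the walls interchanged, the only genuinely new step being the use of the cartesian-square identity $i_{a'}^a(r)(\mathbb T_r(a',b))\cap\mathbb T_r(a,b'')=i_{a'}^a(r)(\mathbb T_r(a',b''))$ to rewrite a representative of a class in $\ker i_{B_1}^B(r)$ lying in $\mathbb T_r(a,b'')$ using an element of $\mathbb T_r(a',b'')$. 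The unbounded cases $a'=-\infty$ follow by passing to the limit $a'\to-\infty$, once one checks that $i_{a''}^a(r)$ carries $\bigcap_{a'<a''}i_{a'}^{a''}(r)(\mathbb T_r(a',b))$ into $\bigcap_{a'<a}i_{a'}^a(r)(\mathbb T_r(a',b))$.

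For Part 2, the two horizontal and two vertical cuts of Figure~2 exhibit each row and column of the displayed $3\times3$ diagram as one of the short exact sequences produced in Part 1 (for the relevant pair of ad-boxes), and each arrow is induced by an inclusion of ad-boxes or by a canonical quotient projection, so commutativity of every cell is immediate from the functoriality $i_a^b(r)=i_c^b(r)\circ i_a^c(r)$ and the compatibility of the quotient maps; in particular the two composites representing $\pi_B^{B_{22}}(r)$, and the two representing $i_{B_{11}}^B(r)$, agree. Given exactness of all rows and columns together with commutativity, no further input is needed.

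The one point requiring care --- and the reason the argument is tedious rather than deep --- is the bookkeeping: in each of the two splitting positions, and in the presence of the unbounded ad-boxes $a'=-\infty$, one must keep track of precisely which composite $H_r(X_p)\to H_r(X_q)$ factors through which intermediate $H_r(X_c)$, so as to verify that the left and bottom walls land where the diagram chases require. Beyond the cartesian-square identity --- itself an immediate consequence of the factoring of $i_a^b(r)$ through $H_r(X_c)$ --- there is no conceptual obstacle.
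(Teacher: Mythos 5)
Your proposal is correct and is essentially the paper's own argument: the three facts you isolate (injectivity of the inclusion-induced maps between the $\mathbb T_r$'s, the kernel/factoring identity, and the cartesian-square identity $i_{a'}^a(r)(\mathbb T_r(a',b))\cap \mathbb T_r(a,b'')=i_{a'}^a(r)(\mathbb T_r(a',b''))$) are precisely the properties (1)--(3) that the Appendix axiomatizes for its commutative squares $\mathbb D$, and your two splitting positions correspond to its observations O1 and O2. The only difference is presentational --- the paper proves the two exactness lemmas abstractly for any square satisfying (1)--(3) and then specializes to ad-boxes, whereas you chase directly with the walls $P_B,Q_B$ --- and, like the paper, you treat the unbounded case $a'=-\infty$ only at the level of a remark.
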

As a consequence, 
for any $-\infty \leq a'<a\leq b' <b  \rm \ {and} \ \epsilon>\epsilon'$ the inclusion induced  linear maps: 
\begin{equation*}\pi_{(a',a]\times (b-\epsilon,\  b]}^{(a',a]\times (b-\epsilon', b]}(r):\mathbb T_r((a',a]\times (b-\epsilon\ , b]) \to \mathbb T_r((a',a]\times (b-\epsilon',b]),
\end{equation*} 
\begin{equation*}\pi_{(a-\epsilon ,\ a]\times (b', b]}^{(a-\epsilon',a]\times (b',b]}(r):\mathbb T_r((a-\epsilon,a]\times (b', b]) \to \mathbb T_r((a-\epsilon',a]\times (b',b])\end{equation*}
and 
\begin{equation*}\pi^{B(a,b;\epsilon')}_{B(a,b;\epsilon\ )}(r) :\mathbb T_r(B(a,b;\epsilon))\to \mathbb T_r(B(a,b;\epsilon')),\end{equation*}
where  $B(a,b;\epsilon):= (a-\epsilon]\times (b-\epsilon, b],$  are surjective.

Define 
\begin{enumerate}
\item   for $a < b$  $$\hat \gamma_r^f(a,b): = \varinjlim _{\epsilon \to 0}T_r(B(a,b;\epsilon)),$$
with respect to the maps $\pi^{B(a,b;\epsilon')}_{B(a,b;\epsilon\ )}(r),$
\item  for  
$-\infty \leq 
a'<a <b$ 
$$\mathbb T_r((a',a]\times b)):=\varinjlim_{\epsilon\to 0} \mathbb T_r ((a',a ]\times (b-\epsilon, b]) $$ 
with respect to the maps $\pi_{(a',a]\times (b-\epsilon,\  b]}^{(a',a]\times (b-\epsilon', b]}(r)$ and then observe that  $\varinjlim_{\epsilon \to 0} \mathbb T_r((a-\epsilon, a]\times b)= \hat\gamma_r(a,b).$
\item  for  
$a \leq b' <b $ 
$$\mathbb T_r(a\times (b', b]):=\varinjlim_{\epsilon\to 0} \mathbb T_r ((a-\epsilon, a]\times (b', b])$$ 
with respect to the maps 
$\pi_{(a-\epsilon ,\ a]\times (b', b]}^{(a-\epsilon',a]\times (b',b]}(r)$ and then observe that $\varinjlim_{\epsilon \to 0} \mathbb T_r(a \times (b-\epsilon ,b])= \hat \gamma_r (a,b).$
\end{enumerate}
These maps induce 
for $-\infty\leq a' <a'' < a \leq b'<b''<b < \infty$ the following exact sequences.
\begin{equation}\label{E22}
\xymatrix{ 0\ar[r]&\mathbb T_r ((a', a'']\times b)\ar[r]^{\iota_{I_1}^{I}}&\mathbb T_r ((a',a]\times b)\ar[r]^{\pi_I^{I_1}}&\mathbb T_r ((a'',a]\times b)\ar[r]&0}
\end{equation} 
with $I_1=(a', a'']\times b, I= (a', a'']\times b, I_2=(a'',a]\times b$ and
\begin{equation}\label{E23}
\xymatrix{ 0\ar[r]&\mathbb T_r  (a\times (b',b''])\ar[r]^{\iota_{I_1}^{I}}&\mathbb T_r (a\times (b', b])\ar[r]^{\pi^{i_{I}^{I_2}}}&\mathbb T_r (a\times (b'', b]))\ar[r]&0}
\end{equation} 
with $I_1=(a\times (b',b''], I= a\times (b',b], I_2=a\times (b'',b].$ 

Note that the exactness of (\ref{E22}) for $a'=-\infty$  can be derived by passing to the inverse limit in (\ref {E22}) when  $a' \to -\infty.$ The exactness is maintained  in view of the surjectivity of 
$\mathbb T_r ((a'_1, a]\times b) \to \mathbb T_r( (a_2',a]\times b) $ for $-\infty < a_1' < a_2' < a.$ 

Schematically (\ref {E22}) and (\ref {E23}) can be written as 

\begin{equation}\label {EEE1}
\xymatrix{ 0\ar[r] &\mathbb T_r(I_1)\ar[r] &\mathbb T_r(I) \ar[r]  &\mathbb T_r(I_2)\ar[r] &0} 
\end{equation}
is an exact sequence. 
\vskip .1in
Extend the definitions above to  
 $$\mathbb T_r( (a,b)\times b):=\varinjlim_{b-a >\epsilon \to 0} \mathbb T_r ((a, b-\epsilon]\times b) $$ 
with respect to the maps induced by inclusions $(a,b-\epsilon]\times b$ into $(a, b-\epsilon']\times b$ for $ \epsilon >\epsilon',$
  $$\mathbb T_r(a\times (a, \infty)):=\varinjlim _{a<b\to \infty}\mathbb T_r(a\times (a, b])$$
with respect to the maps induced by inclusions $a\times (a, b_1]$ into $a\times (a, b_2]$ for $a < b_1 < b_2 <b.$ 
 and observe 
 $$\mathbb T_r(a\times (a, b]):=\varprojlim_{b-a>\epsilon \to 0} T_r(a\times (a+\epsilon, b])$$
 with respect to the surjective maps  induced by inclusions $(a\times (a+\epsilon,b])$ into $(a\times (a+\epsilon',b])$ for $ \epsilon >\epsilon'.$

\vskip .1in
The reader can also check that 
$$\mathbb T_r((a, b)\times b) = \varinjlim_{\epsilon \to 0}\mathbb T_r((a, b-\epsilon]\times (b-\epsilon, b])$$
with respect to the linear maps $t_{B_\epsilon}^{B_{\epsilon'}}=  \iota^{B_{\epsilon'}}_{(a, b-\epsilon]\times (b-\epsilon',b]}\cdot \pi_{B_\epsilon}^{(a, b-\epsilon]\times (b-\epsilon',b]}$ where  $B_{\epsilon}= (a', b-\epsilon]\times (b-\epsilon,b]$
and $\epsilon ' <\epsilon.$

One can verify that   
\begin{equation}\label {E17}
\begin{aligned} 
\hat\gamma_r(a, b)=  &\mathbb T_r(a, b) / i(r)(\mathbb T_r(<a, b)) +\mathbb T_r(a,<b)\\
\mathbb T_r(a\times (b',b])= &\mathbb T_r(a, b) /i(r)(\mathbb T_r(<a, b)) +\mathbb T_r(a, b')\\
\mathbb T_r ((a', a]\times b)= &\mathbb T_r(a, b)/ i(r)(\mathbb T_r(a', b)) +\mathbb T_r(a, <b)\\
\mathbb T_r(a\times (a,b])= &\mathbb T_r(a, b) / i(r)(\mathbb T_r(<a, b))\\
\mathbb T_r(a\times (a,\infty))= &\mathbb T_r(a, \infty)) / i(r)(\mathbb T_r(<a, \infty))\\ 
\mathbb T_r((a', b)\times b)= &\mathbb T_r(<b,b) / i(r)(\mathbb T_r(a',b)) \\  
\mathbb T_r((-\infty, b)\times b)=& \mathbb T_r(<b,b) /\hat \lambda^f_r(b)
 \end{aligned}
\end{equation}
and that 
\begin{equation}
\varinjlim_{\epsilon,\epsilon'  \to 0} \mathbb T_r((a-\epsilon] \times (b-\epsilon' ,b])= \hat \gamma_r (a,b).
\end{equation}

The description of the canonical projections $\pi_{\cdots}^{\cdots}$ summarized in the diagram  below is implicit in (\ref{E17}).  

$$\xymatrix{&&\mathbb T_r((a',a]\times b )\ar[rd]^{\pi_{(a'b[\times b}^{(a,b)}}&\\
 \mathbb T_r(a,b) \ar@/^6pc/[rrr]^{\pi^{(a,b)}_{\{a,b\}}} \ar@/^1pc/[rru]^{\pi^{(a',a]\times b}_{\{a,b\}}} \ar[rrd]_{\pi^{a\times [b,b')}_{\{a,b\}}} \ar[r]^-{\pi_{\{a,b\}}^B}
&\mathbb T_r(B)\ar[ru]_{\pi_B^{(a',a]\times b}}\ar[rd]^{\pi_B^{a\times [b,b')}}\ar[rr]^{\pi_B^{\{a,b)}}&&\hat\gamma_r(a,b)\\
&&\mathbb T_r(a\times [b,b'))\ar[ru]_{\pi_{a\times[b,b')}^{(a,b)}}&.}
$$
\vskip .1in
In view of the above definitions  Proposition \ref{P447}  leads to the following 
\begin{proposition}\label {P46} \

\begin{enumerate}
\item
\begin{enumerate}
\item For $a< b' <b$ one has
 $$\dim \mathbb T_r(a \times (b',b])  \leq \dim (\coker  (H_r(X_{<a})\to H_r(X_a)) \leq \dim H_r(X_a, X_{<a})$$ 
\item For $a'<a <b$ one has 
\begin{equation*}
\begin{aligned}
\dim \mathbb T_r((a',a] \times b)  \leq \dim (\coker  (H_{r+1}(X_{<b}, X_a) \to H_{r+1}(X_b, X_a))\\
\leq 
\dim H_{r+1}(X_b, X_{<b})
\end{aligned} 
\end{equation*}
  \end{enumerate}
The same inequalities hold for $(b',b]$ in (a) replaced by $(a,\infty)$ and $(a',a]$ in (b) replaced by $(-\infty, b)$
\item 
\begin{enumerate} 
\item If either $a$ or $b$ are regular values then $\hat \gamma_r(a,b)=0,$ 
\item For any $a,$  $\supp \gamma^f_r\cap (a\times (a,\infty))$  is a finite set and when $a$ regular value is empty.
\item For any $b,$  $\supp \gamma^f_r\cap ((-\infty,b)\times b)$  is a finite set and when $b$ regular value is empty.
\end{enumerate}
\end{enumerate}
\end{proposition}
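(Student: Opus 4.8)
The plan is to derive the two chains of inequalities in part~1 by realising each space $\mathbb{T}_r(\cdot)$ that occurs there as an explicit sub-quotient of a single homology group, to read part~2(a) off part~1, and to obtain the finiteness statements of part~2(b)--(c) by rerunning, in the $\hat\gamma$-picture, the bookkeeping already carried out for $\hat\delta$ in Proposition~\ref{P45}. For 1(a): by (\ref{E17}) the space $\mathbb{T}_r(a\times(b',b])$ is a quotient of $M:=\mathbb{T}_r(a,b)/i(r)(\mathbb{T}_r(<a,b))$. Since singular homology commutes with the increasing union $X_{<a}=\bigcup_{a'<a}X_{a'}$, one has $\mathbb{T}_r(<a,b)=\varinjlim_{a'\to a}\mathbb{T}_r(a',b)=\ker\big(H_r(X_{<a})\to H_r(X_b)\big)$, and as this map factors through $i(r)\colon H_r(X_{<a})\to H_r(X_a)$ we get $i(r)(\mathbb{T}_r(<a,b))=\img\big(H_r(X_{<a})\to H_r(X_a)\big)\cap\mathbb{T}_r(a,b)$. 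Hence $M$ embeds into $\coker\big(H_r(X_{<a})\to H_r(X_a)\big)$, which embeds into $H_r(X_a,X_{<a})$ by the exact sequence of the pair $(X_a,X_{<a})$; this is the asserted chain. The case ``$(b',b]$ replaced by $(a,\infty)$'' is the same argument with $X_b$ replaced by $X$, using $\mathbb{T}_r(a\times(a,\infty))=\mathbb{T}_r(a,\infty)/i(r)(\mathbb{T}_r(<a,\infty))$.

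For 1(b): again by (\ref{E17}) the space $\mathbb{T}_r((a',a]\times b)$ is a quotient of $\mathbb{T}_r(a,b)/\mathbb{T}_r(a,<b)$, with $\mathbb{T}_r(a,<b)=\ker\big(H_r(X_a)\to H_r(X_{<b})\big)\subseteq\mathbb{T}_r(a,b)$. Apply the short exact sequence (\ref{E5}), with degrees raised by one, to the pairs $(X_b,X_a)$ and $(X_{<b},X_a)$; the inclusion of pairs $(X_{<b},X_a)\hookrightarrow(X_b,X_a)$ induces a morphism between the two resulting short exact sequences whose right-hand vertical arrow is the inclusion $\mathbb{T}_r(a,<b)\hookrightarrow\mathbb{T}_r(a,b)$, so the snake lemma produces a surjection $\coker\big(H_{r+1}(X_{<b},X_a)\to H_{r+1}(X_b,X_a)\big)\twoheadrightarrow\mathbb{T}_r(a,b)/\mathbb{T}_r(a,<b)$, which is the first inequality; the second is the exact sequence of the triple $(X_b,X_{<b},X_a)$. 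The limit versions (e.g.\ $(a',a]$ replaced by $(-\infty,a]$, or the horizontal line $(-\infty,b)\times b$) follow in the same way, the relevant objects staying quotients of $\mathbb{T}_r(a,b)/\mathbb{T}_r(a,<b)$, respectively of $\mathbb{T}_r(<b,b)=\ker\big(H_r(X_{<b})\to H_r(X_b)\big)$, and $\dim\mathbb{T}_r(<b,b)\le\dim H_{r+1}(X_b,X_{<b})$ by the exact sequence of $(X_b,X_{<b})$.

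Part~2(a) is then immediate: if $a$ is regular then $H_r(X_a,X_{<a})=0$, so 1(a) gives $\mathbb{T}_r(a\times(b',b])=0$ for all $b'\in(a,b)$, hence $\hat\gamma_r(a,b)=0$ since it is a colimit along surjections of those spaces; if $b$ is regular then $H_{r+1}(X_b,X_{<b})=0$, so 1(b) gives $\mathbb{T}_r((a',a]\times b)=0$, hence $\hat\gamma_r(a,b)=0$. For 2(b)--(c): by the $b=\infty$ case of 1(a), $\dim\mathbb{T}_r(a\times(a,\infty))\le\dim H_r(X_a,X_{<a})<\infty$, and $\mathbb{T}_r((-\infty,b)\times b)$, a quotient of $\mathbb{T}_r(<b,b)$, has dimension $\le\dim H_{r+1}(X_b,X_{<b})<\infty$, both finite by weak tameness. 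Choosing splittings $i_{(a,b)}(r)\colon\hat\gamma_r(a,b)\to\mathbb{T}_r(a,b)$ and rerunning the inductive argument in the proof of Proposition~\ref{P45} (with $\hat\delta$, $\mathbb{F}_r$, Proposition~\ref{P41} replaced throughout by $\hat\gamma$, $\mathbb{T}_r$, Proposition~\ref{P447}), the assembled maps $\bigoplus_{(a,b)\in A}\hat\gamma_r(a,b)\to\mathbb{T}_r(K)$ are injective for every finite $A$ and every $K=a\times(a,\infty)$ or $(-\infty,b)\times b$; hence $\sum_{b>a}\gamma^f_r(a,b)\le\dim\mathbb{T}_r(a\times(a,\infty))<\infty$ and $\sum_{a<b}\gamma^f_r(a,b)\le\dim\mathbb{T}_r((-\infty,b)\times b)<\infty$, giving the finiteness, while emptiness of the supports for regular values is part~2(a).

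I expect the only step that is not routine diagram-chasing to be the comparison used in 1(b): checking that (\ref{E5}) is sufficiently natural in the pair for $(X_{<b},X_a)\hookrightarrow(X_b,X_a)$ to induce the claimed morphism of short exact sequences --- with precisely the inclusion $\mathbb{T}_r(a,<b)\hookrightarrow\mathbb{T}_r(a,b)$ on the right --- and then identifying the snake-lemma cokernel with $\mathbb{T}_r(a,b)/\mathbb{T}_r(a,<b)$. Everything else (the sub-quotient descriptions from (\ref{E17}), the surjectivity of the colimit maps defining $\hat\gamma_r$, and the Proposition~\ref{P45}-style induction) is mechanical, and the weak-tameness hypotheses enter only to guarantee that $H_r(X_a,X_{<a})$ and $H_{r+1}(X_b,X_{<b})$ are finite-dimensional.
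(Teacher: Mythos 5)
Your proposal is correct and follows essentially the same route as the paper: part 1 via the sub-quotient descriptions in (\ref{E17}) together with the diagram chases the paper records as (\ref{D21}) and (\ref{D22}) (your morphism-of-short-exact-sequences/snake-lemma phrasing of 1(b) and the exact sequence of the triple $(X_a\subset X_{<b}\subset X_b)$ are the same computation), and part 2(a) by specializing part 1. For 2(b)--(c) the paper only writes ``follows from Item 1,'' and the splitting/injectivity argument you supply is exactly the one the paper itself carries out in Proposition \ref{P47}, so you have merely made explicit a step the paper leaves implicit.
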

\begin{proof}\

Item 1.:  To check part  a)
observe that  from the commutative diagram (\ref{D21}) with all raws and columns exact 
\begin{equation}\label {D21}
\xymatrix{
           & 0                                                                         & 0                                           &\\
           &\mathbb T_r(a,b)/ i(\mathbb T_r(<a,b) \ar[r]\ar[u]  &H_r(X_a)/ i (H_r(X_{<a}))\ar[u] & \\
 0 \ar[r]&\mathbb T_r(a,b) \ar[r]\ar[u] & H_r(X_a)\ar[r]\ar[u] & H_r(X_b)\\ 
 0 \ar[r]&\mathbb T_r(<a,b) \ar[r]\ar[u] & H_r(X_{<a})\ar[r]\ar[u]^i& H_r(X_b)\ar[u]^{=}}
 \end{equation} 
one derives the injectivity of $\mathbb T_r(a,b)/ i \mathbb T_r(<a,b))   \to H_r(X_a)/ i H_r(X_<a).$  
In the sequence  
{\small $$\xymatrix {\mathbb T_r(a,b) / (i(r) (\mathbb T_r(<a,b)) + \mathbb T_r(a,b'))& \mathbb T_r(a,b)/ i(r) (\mathbb T_r(<a,b))\ar[l] \ar[ld] \\
H_r(X_a)/ i H_r(X_<a)\ar[r] &H_r(X_a, X_{<a})}$$ }
the  right to left arrow   
is surjective and both the left to right arrow and the left to down arrow   
are injective.
Then, in view of the  finite dimensionality of $H_r(X_a, X_{<a}),$  the statement follows. 

To check  part  b)  one considers 
the commutative diagram (\ref{D22}) with all rows and columns exact 
\begin{equation} \label {D22}
\xymatrix{
0 & 0\\ 
H_{r+1}(X_b, X_a)/ i H_{r+1}(X_{<b}, X_a)\ar[r]\ar[u]&\mathbb T_r(a,b) / i(r) (\mathbb T_r(a,<b))\ar[u] &\\
H_{r+1}(X_b, X_a)\ar[r]\ar[u]&T_r(a,b)\ar[r]\ar[u] &0\\
H_{r+1}(X_{<b}, X_a)\ar[r]\ar[u]^{i}&\mathbb T_r(a,<b)\ar[r] \ar[u]^{i(r)}& 0
}
\end{equation}
and one derives the surjectivity of  $$H_{r+1}(X_b, X_a)/ i H_{r+1}(X_{<b}, X_a)\to \mathbb T_r(a,b) / i(r) (\mathbb T_r(a,<b) .$$
From the long exact sequence of the triple $(X_a \subset X_{<b} \subset X_b)$ one derives the injectivity of  
\newline $H_{r+1}(X_b, X_a)/ i H_{r+1}(X_{<b}, X_a) \to H_{r+1}(X_b, X_{<b}).$
Then, in view of the finite dimensionality 
\newline of $H_{r+1}(X_b, X_{<b}),$
the  diagram
$$\xymatrix {\mathbb T_r(a,b) / (i(r) \mathbb (T_r(a',b)) + \mathbb T_r(a,<b))& \mathbb T_r(a,b)/ i(r)\mathbb (T_r(a,<b))\ar[l]  \\
H_{r+1}(X_b, X_a)/ i H_{r+1}(X_{<b},X_a)\ar[ru]  \ar[r]&H_{r+1}(X_b, X_{<b})}, $$
implies the statement b).  The extension to intervals $(a,\infty), (-\infty, b)$  follows in view of the definitions of $\mathbb T_r(a\times (a,\infty))$ and 
$\mathbb T_r((-\infty, b)\times b).$

\vskip .1in  
Item 2.: follows from Item 1. 
\end{proof}

\vskip .2in 
Suppose that $K$ is one of the  following type of sets. 
\begin{enumerate}
\item a bounded or unbounded ad-box $B= (a',a]\times (b', b],$ $ -\infty\leq a' <a< b, \ \   b'<b\leq \infty,$  
\item a  horizontal open-closed interval $I= (a',a]\times b,$  $ -\infty\leq a' <a ,$
\item  a vertical  open-closed  interval $J= a\times (b',b],$    $ a\leq b'<b,$   
\item $(-\infty, b)\times b,$ 
\item $a\times (a,\infty),$  
\end{enumerate}
and observe  that in view of Proposition \ref{P46},  when  $K$ is of type  2., 3., 4. or 5., the vector space $\mathbb T_r(K)$ has  finite dimension. 

Denote by  $(CR(f)\times CR(f))_+ :=\{ (a,b)\mid a,b\in CR(f), a<b\}.$
\vskip .2in
{\bf Splittings:}
\begin{enumerate} 
\item  For any $(a,b)\in (CR(f)\times CR(f))_+ $   a {\it splitting} $$i_{(a,b)}(r): \hat \gamma_r(a,b)\to \mathbb T_r(a,b)$$  is a right inverse of the canonical projection $\pi^{(a,b)}_{\{a,b\}}(r): \mathbb T_r(a,b)\to \hat \gamma^f_r(a,b).$ 
\item  For any  $K$ in either one of the situations above and $(a,b)\in K$  
one defines $$i_{(a,b)}^K(r): \hat \gamma _r(a,b)\to \mathbb T_r(K)$$  first for the case the point $(a,b)$ is the relevant corner or  vertex  of $K$ then  for an arbitrary point of $K$ as in the previous subsection.
\item  
For a collection of splittings $\mathcal S= \{ i_{(a,b)}(r) \mid (a,b)\in CR(f)\times CR(f)_+ \}, $  set  $A\subset (CR(f)\times CR(f))_+$ and $K$ in one of the cases 1. to  5.  above  one defines  
$$\boxed{^{\mathcal S}I^K_{A\cap K}(r)= \sum _{(\alpha, \beta)\in A \cap K}  i^K_{(\alpha, \beta)}(r): \oplus_{(\alpha, \beta)\in A}  \hat\delta_r(\alpha, \beta)\to \mathbb T_r(K) .}$$
\end{enumerate}
Proposition \ref{P46} can be refined to Proposition \ref{P47}
\begin{proposition}\label {P47}\ 

For any choice of $\mathcal S$
 \begin{enumerate}
 \item The maps $^{\mathcal S}I_A^K(r)$ are injective.
\item If $A= (CR(f)\times CR(f))_+$ and $K$ is either of type  2.(when $-\infty <a'$), 3., 4. or 5.,  then  $^{\mathcal S} I^K_{A\cap K}$ is an isomorphism. 
In particular 

\begin{equation}\label {E25} \mathbb T_r(a\times (b',b])\simeq  \oplus _{b' < t\leq b}\hat\gamma _r(a,t)\  \rm{if}\ a\leq b' <b <\infty
\end{equation}

\begin{equation}\label {E25'} \mathbb T_r(a\times (a, \infty)\simeq  \oplus _{a < t \leq \infty} \hat\gamma _r(a,t)
\end{equation}
and 

\begin{equation}\label {E26} \mathbb T_r((a', a]\times b)\simeq  \oplus _{a' < t\leq a}\hat\gamma _{r}(t,b)\  \rm{if}\ -\infty \leq a' <a <b .\end{equation}

\begin{equation}\label {E26'} \mathbb T_r((b, \infty)\times b)\simeq  \oplus _{b < t_\infty} \hat\gamma _{r}(t,b)\  \rm{if}\ -\infty \leq a' <a <b .\end{equation}
\end{enumerate}
\end{proposition}

\begin{proof}\

Item 1. follows by similar arguments as  in the proof of Proposition \ref{P45} item 1.

Item 2.  We treat $K$ of type 2and 4 as case 1 and type 3 and 5 as case 2. 

For case 1 
 let $b\in \mathbb R$  and 
\begin{equation} 
K= \begin{cases} (a', a] \times b,   -\infty \leq a' <a  <b  \\
(a', b)\times b,       -\infty \leq a' <b.
\end{cases}
\end{equation} 

For case 2
let $a\in \mathbb R$  and 
 \begin{equation} 
K= \begin{cases} a\times (b',b] , a\leq b' <b <\infty  \\
a\times (b',\infty),   a\leq b'.
\end{cases}.
\end{equation} 

We will verify two facts, 

{\bf Fact 1:}
 $\mathbb T_r(K)= 0$ iff $\sup \hat \gamma_r \cap K= \emptyset.$
\vskip .1in 
{\bf Fact  2:}
 $\mathbb T_r(K)\ne 0$ implies in case 1 the existence of a finite 
  collection of real numbers $\{ a_1, a_2, \cdots a_M\}$ s.t.   $a' <a_M  <\cdots <a_2 <a_1 \leq a$ 
  in case 2 the existence of a finite collection of real numbers  $\{ b_1, b_2, \cdots b_N\}, b' < b_1 < b_2 <\cdots b_N\leq b$ and 
   in case 1 $\sup \hat \gamma^f_r \cap K:= \{a_1\times b, a_2\times b, \cdots a_M\times b\} $ in case 2
  $\sup \hat \gamma^f_r \cap K:= \{ a\times b_1, a\times b_2, \cdots, a\times b_N\}.$ 
 
 The verifications use the finite dimensionality of $\mathbb T_r(K)$ and the exact sequence (\ref {E22})  in case 1 and (\ref {E23})  in case 2.
 Since the verifications are essentially the same we will do it in details only  Case 1.  
 
 Observe that by Item 1.  $\mathbb T_r(K)= 0$  implies $\sup \hat \gamma_r \cap K= \emptyset.$ 
and want   to conclude that $\mathbb T_r(K)\ne 0$ implies $\supp \gamma^f_r \cap K\ne \emptyset.$  This will verify Fact 1. 
  
 It suffices to check this for $a'\ne -\infty,$ because if $a'=-\infty,$  by  (\ref{E22}),  there exists $\alpha,$ $-\infty <\alpha <a$ such that $\mathbb T_r (\alpha, a] \times b)\ne 0.$
 If no such  $\alpha$ exists then $\mathbb T_r((-\infty,a]\times b)= 0,$  again by (\ref {E22}), impossible in view of the hypothesis .  

 If $a' \ne -\infty$ one can produce two infinite sequences $\{\alpha'_i\}$ and $\{\alpha_i\}$    $a' \leq \alpha'_1 \leq  \alpha'_2,\leq   \cdots \leq \alpha'_n\leq \alpha'_{n+1} \leq \cdots $ and 
 $a\geq  \alpha_1 \geq  \alpha _2 \geq \cdots \geq \alpha_n \geq \alpha_{n+1}\geq \cdots$ such that 
 
(i)  $\alpha '_n \leq \alpha'_{n+1}   <\alpha _{n+1}\leq \alpha_n,$ 

 (ii) $| \alpha_{n+1} -\alpha_{n+1} | =1/2 |\alpha'_n- \alpha_n |,$
  
  (iii) $\mathbb T_r ((\alpha'_n, \alpha_n] \times b)\ne 0$ for every $n.$
 
Indeed, in view of \ref{E22}) one can take  the interval $(\alpha'_{n+1}, \alpha_{n+1}]$ to be  one of the two intervals $I = (\alpha'_n,  \frac {\alpha_n +\alpha'_n} {2} ]$ or $(\frac {\alpha_n +\alpha'_n} {2} , \alpha_n]$ 
 
 which satisfies $\mathbb T_r(I)\ne 0$  and inductively construct the sequences starting with $\alpha'_1= a'$ and $\alpha_1=a .$ They will  satisfy the properties (i), (ii), (iii).  
In view of (i) and (ii) both sequences are convergent to the real number $\alpha \in (a' , a.]$
Since $\dim \mathbb T_r ((\alpha'_n, \alpha_n] \times b)$ provides a  decreasing sequence of positive integers,  hence constant for $n$ large enough, hence with the vector space  
$\mathbb T_r ((\alpha'_n, \alpha_n] \times b)$ stabilising to a nontrivial vector space $\hat \gamma ^f_r(\alpha, b).$ Hence  $(\alpha, b)\in \supp \gamma^f_r \cap K. $

Case 2 is treated similarly  based on (\ref{E23}) , producing sequences $\{\beta'_n\},$ $\{\beta_n\}$ and ultimately $\beta \in (b', b]$ s.t $(a, \beta)\in \supp \gamma^f\cap K.$
\vskip .1in

To check Fact 2 note that 
Proposition (\ref{P46}) produces a  finite sequence  $\{a_M < \cdots <a_2 <a_1\} $  in case 1 and $ \{b_1 < b_2 <\cdots b_N\}$ in case 2 with  $a_i$  and $b_i$  exactly the points where the map $(a',a]\ni t\rightsquigarrow \dim \mathbb T_r( ((t, a]\times b)$  in case 1  and $(b',b]\ni t \rightsquigarrow \dim \mathbb T_r(a \times (b',  t])$  in case 2 have  discontinuities. 
Clearly both $a_i$ and $b_i$  in view of (\ref {E22}) and ({E23}) are critical values and there are no points  $(\alpha ,b), \alpha \in (a_{i+1}, a_i)$ in case 1 or $(a,\beta), \beta\in (b_i, b_{i+1})$ in case 2 in $\supp \gamma^f_r \cap K.$   
 
Item 1 guaranties the injectivity of $^{\mathcal S}I_{K\cap \sup \hat\gamma_r}^K(r).$ 
A supposed  lack of surjectivity is  ruled out using Fact 1.  The isomorphism $^{\mathcal S}I_{K\cap \sup \hat\gamma_r}^K(r)$ implies (\ref {E26}) and (\ref{E26'}) in case 1 and (\ref{E25}) and ({E25'}) in case 2.

\end{proof}
\vskip .2in

\begin{corollary}\label {C48}\

1. $\coker (H_r(X_{<a})\to H_r(X_a) )\simeq (\oplus _{t\in \mathbb R} \hat \delta_r(a,t)) \bigoplus  (\oplus_{t\in \mathbb R} \hat \gamma _r(a,t))$

2. $\ker (H_r(X_{<b})\to H_r(X_b) )\simeq \mathbb T_r((-\infty, b)\times b)\simeq  \oplus_{t\in (-\infty, a)} \hat\gamma^f_r(t,a)\oplus \hat \lambda^f_r(b)$
\end{corollary}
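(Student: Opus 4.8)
The plan is to obtain both identities by feeding exact sequences already established for the weakly tame map $f$ into Propositions \ref{P45} and \ref{P47}; since all homology is over the field $\kappa$, every short exact sequence below splits, so the displayed isomorphisms are of $\kappa$-vector spaces. \emph{For item 1}, the first isomorphism theorem applied to the map $H_r(X_c)\to H_r(X)$ (which has kernel $\mathbb T_r(c,\infty)$ and image $\mathbb I_c(r)$) gives, for each $c$, the short exact sequence $0\to\mathbb T_r(c,\infty)\to H_r(X_c)\to\mathbb I_c(r)\to 0$, a row of diagram (\ref{E6}). Writing it for $X_a$ and for $X_{<a}$, the inclusion $X_{<a}\subset X_a$ induces a morphism of these short exact sequences whose right vertical arrow is the inclusion of subspaces $\mathbb I_{<a}(r)\hookrightarrow\mathbb I_a(r)$, in particular injective, so the connecting map of the snake lemma vanishes and we obtain the exact sequence
\[
0\to\operatorname{coker}\!\big(\mathbb T_r(<a,\infty)\to\mathbb T_r(a,\infty)\big)\to\operatorname{coker}\!\big(H_r(X_{<a})\to H_r(X_a)\big)\to\mathbb I_a(r)/\mathbb I_{<a}(r)\to 0 .
\]
By (\ref{E17}) the left term is $\mathbb T_r(a\times(a,\infty))$, which by Proposition \ref{P47}, item 2 (the case $b'=a$, $b=\infty$ of (\ref{E25})) is $\simeq\bigoplus_{t\in\mathbb R}\hat\gamma_r(a,t)$; by (\ref{E11}) the right term is $\mathbb F_r(a\times\mathbb R)$, which by Proposition \ref{P45}, item 2 is $\simeq\bigoplus_{t\in\mathbb R}\hat\delta_r(a,t)$. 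Both are finite dimensional (Propositions \ref{P43} and \ref{P46}), so the sequence splits, which proves item 1.

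\emph{For item 2}, note first that $\ker(H_r(X_{<b})\to H_r(X_b))=\mathbb T_r(<b,b)$ by the very definition of $\mathbb T_r(<b,b)$ as the directed colimit $\varinjlim_{b>a'\to b}\mathbb T_r(a',b)$; from the homology exact sequence of the pair $(X_b,X_{<b})$ this kernel is a quotient of $H_{r+1}(X_b,X_{<b})$, hence finite dimensional by the weak tameness of $f$ (condition (2) of Definition \ref{D2.6}). On the other hand (\ref{E17}) presents $\mathbb T_r((-\infty,b)\times b)$ as $\mathbb T_r(<b,b)\big/\bigcap_{a'<b}i(\mathbb T_r(a',b))$, and Proposition \ref{P47}, item 2 (the case $a=b$, $a'=-\infty$ of (\ref{E26})) gives $\mathbb T_r((-\infty,b)\times b)\simeq\bigoplus_{t<b}\hat\gamma_r(t,b)$. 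So item 2 reduces to proving $W:=\bigcap_{a'<b}i(\mathbb T_r(a',b))=0$. Since any class of $\mathbb T_r(<b,b)$ lying in $\operatorname{img}(H_r(X_{a'})\to H_r(X_{<b}))$ lifts automatically into $\ker(H_r(X_{a'})\to H_r(X_b))=\mathbb T_r(a',b)$, one has $i(\mathbb T_r(a',b))=\mathbb T_r(<b,b)\cap\operatorname{img}(H_r(X_{a'})\to H_r(X_{<b}))$, so these subspaces of the finite dimensional space $\mathbb T_r(<b,b)$ form a decreasing family and stabilize to $W$.

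\emph{The main obstacle} is the vanishing $W=0$: intuitively, a nonzero class of $W$ would be killed at the finite level $b$ yet descend from arbitrarily low levels, which is impossible. In the situation of interest, where $f\colon\tilde X\to\mathbb R$ is a lift of a tame TC1-form, I would prove this using the $\kappa[\Gamma]$-module structure. The translations $\langle g\rangle$ with $g\in\Gamma$, $g<0$, carry $W$ into itself (shifting a representative down by $|g|$ preserves both ``killed at level $b$'' and ``descends from every level''); the image of $W$ under $H_r(\tilde X_{<b})\to H_r(\tilde X)$ is $0$ because $W\subseteq\ker(H_r(\tilde X_{<b})\to H_r(\tilde X_b))$, whereas, $W$ being representable at all low levels, that image also lies in $\bigcap_{t\in\mathbb R}\mathbb I_t(r)=\mathbb I_{-\infty}(r)=\operatorname{Tor}H_r(\tilde X)$ by Proposition \ref{P31}. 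Combining these facts with the finiteness of $\mathbb T_r(<b,b)$ and a Mittag-Leffler argument on the inverse system $\{\mathbb T_r(a',b)\}_{a'<b}$ forces $W=0$. Once $W=0$, we conclude $\ker(H_r(X_{<b})\to H_r(X_b))=\mathbb T_r(<b,b)=\mathbb T_r((-\infty,b)\times b)\simeq\bigoplus_{t<b}\hat\gamma_r(t,b)$, which proves item 2.
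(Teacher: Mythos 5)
Your item 1 is correct and is essentially the paper's own argument: the snake-lemma sequence you build is exactly the bottom row of diagram (\ref{E6}), i.e.\ the exact sequence (\ref{E4}) for the pair $(<a,a)$, and the identifications of the two end terms via (\ref{E17}), Proposition \ref{P47} (case $b'=a$, $b=\infty$ of (\ref{E25})), and (\ref{E14})/Proposition \ref{P45} are the ones the paper uses; finite dimensionality and field coefficients give the splitting.

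For item 2 you have correctly isolated the real content, which the paper's two-line proof passes over in silence: since $\ker(H_r(X_{<b})\to H_r(X_b))=\mathbb T_r(<b,b)$ and (\ref{E17}) presents $\mathbb T_r((-\infty,b)\times b)$ as the quotient of $\mathbb T_r(<b,b)$ by $W=\bigcap_{a'<b} i(r)(\mathbb T_r(a',b))$, the first isomorphism of item 2 is exactly the assertion $W=0$. But your argument for $W=0$ is a plan, not a proof. The facts you assemble do not suffice: every element of $W$ dies in $H_r(X)$ for trivial reasons (it already dies in $H_r(X_b)$), and this is perfectly compatible with $W\neq 0$ for a general weakly tame map --- take $X$ a half-infinite cylinder $S^1\times(-\infty,0]$ capped off by a cone at the top, $f$ the height function: the generator of $H_1(X_{<1})\cong\kappa$ is killed in $H_1(X_1)=H_1(X)=0$ yet comes from every level $a'$, so $W=\kappa$ while $\mathbb T_1((-\infty,1)\times 1)=0$. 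Hence the equivariance must enter substantively, and the ``Mittag-Leffler argument'' you name is precisely the step that needs to be performed, in two parts: (i) $\varprojlim_{a'\to-\infty}\mathbb T_r(a',b)=\mathbb T_r(-\infty,b)=0$, which follows from the injectivity of the left vertical arrow of diagram (\ref{D1}) established in Theorem \ref{T32} (a compatible family lying in the kernels maps to $0$ in $H_r(\tilde X)$, hence is $0$); and (ii) the identification of the intersection of images $W$ with the image of this inverse limit, which requires the stabilization of $\operatorname{img}(\mathbb T_r(a'',b)\to\mathbb T_r(a',b))$ furnished by the periodic model $H_r(\tilde X_{a-kl})\cong V\oplus W$, $v=\left(\begin{smallmatrix}\alpha&\beta\\ 0&0\end{smallmatrix}\right)$, of Theorem \ref{T32} --- finite dimensionality of $\mathbb T_r(<b,b)$ alone does not give Mittag-Leffler for the tower $\{\mathbb T_r(a',b)\}$, whose terms may be infinite dimensional. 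For what it is worth, the paper itself only records $\mathbb T_r(-\infty,b)=0$ in subsection 4.3, for lifts of tame TC1-forms, so your proposal is more candid than the printed proof about where the difficulty lies, but it does not close it.
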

\begin{proof}\

Item 1.: 
In view of last raw of (\ref{E6} ) one has 
$$\coker (H_r(X_{<a})\to H_r(X_a))\simeq  \mathbb I_a(r)/ \mathbb I_{<a}(r))\oplus Coker  (\mathbb T_r(<a,\infty)\to \mathbb T_r(a,\infty)).$$
 
\noindent In view of the fifth equality (\ref{E17})  and of the equality (\ref{E25}) one has
 $$\coker  (\mathbb T_r(<a,\infty)\to \mathbb T_r(a,\infty))\simeq  \mathbb T_r(a\times (a,\infty))\simeq
 \oplus_{t\in  (a,\infty)} \hat \gamma _r(a,t).$$
In view of  the equality (\ref{E14}) 
$$\mathbb I_a (r)/ \mathbb I_{<a}(r) \simeq \oplus_{t\in \mathbb R} \hat \delta^f_r(a, t).$$
In view of 
the exact sequence last raw of  (\ref{E6})
$$\coker (H_r( X^f_a)\to H_r(X^f_ b))\simeq  \coker (\mathbb T^f_r(a,\infty)\to \mathbb T^f_r(b, \infty)) \oplus \mathbb I^f_b(r)/\mathbb I^f_a(r).$$

\noindent Combining these three isomorphisms  item 1. follows. 
\vskip .1in
Item 2. : In view of the seventh equality (\ref{E17}) one has  

$$\mathbb T_r((-\infty, b)\times b)= \mathbb T(<b,b)/  i(r) \mathbb T_r(-\infty, b) $$
\noindent In view of  Proposition \ref{P47} and formula  (\ref{Ee})
$$\mathbb T_r((-\infty, b)\times b)\simeq \oplus _{t\in (-\infty, b)} \hat \gamma_r(t,b) \oplus  \hat \lambda^f_r(b) $$
Putting together these  isomorphisms one obtains Item 2.

\end{proof}
\vskip .2in
To summarize,   the map $\gamma^f_r:\mathbb R^2\to \mathbb Z_{\geq 0}$  satisfies :

\begin{enumerate}
\item for any $a\in \mathbb R,$ $\supp \ \gamma^f_r \cap  (a \times \mathbb R)$ is a finite  set of total multiplicity   
\begin{equation}\label {E20}
\dim \mathbb T_r(a\times (a,\infty))= \sum_{t\in (a,\infty)} \gamma^f_r(a, t),
\end{equation}
in particular for $a $ regular value is zero.
\item for any $b\in \mathbb R$ the set $\supp \ \gamma^f_r \cap  (\mathbb R \times b)$ is  finite  and of total multiplicity 
\begin{equation}\label{E21}
\dim \mathbb T_r((-\infty,  b)\times b) = \sum_{x\in (-\infty, b)} \gamma^f_r(x,b)  +\lambda^f_r(b), 
\end{equation}
in particular for $b$ regular value  is zero.
\end{enumerate}

\vskip .2in 
As a consequence of Corollary {\ref {C48} one obtains. 
\begin{equation}\label {E27}
\begin{aligned}\dim H_r(X_a, X_{<a}) = \sum_{t\in \mathbb R}  \delta_r(a,t) + \sum_{t\in (a,\infty)}  \gamma _r(a,t)  
+ \sum_{t\in (-\infty, a)}  \gamma_{r-1}(t, a) +\lambda^f_{r-1}(a) 
\end{aligned}
\end{equation} 
\vskip .1in
 
 \subsection{ The case of TC1 form} 
 
Suppose that $f:\tilde X\to \mathbb R$ is the lift of a tame TC1-form  $\omega.$  
Then $f$  $\Gamma-$equivariant map,  the sets $CR_r(f)$ and $CR_r(f)$  are  $\Gamma-$invariant w.r. to the translations by the elements of $\Gamma,$  
 the set $\mathcal O_r(f)= CR_r(f)/\Gamma$ is finite  and the maps $\delta^f_r$ and $\gamma ^f_r$ are $\Gamma-$ invariant  i.e.
\begin{equation*}
\begin{aligned}
\delta^f_r(a,b)= \delta^f_r(a+g, b+g)\\
\gamma_r^f(a,b)= \gamma_r^f(a+g, b+g).
\end{aligned}
\end{equation*} 
In view of (\ref{E14}) and (\ref{E27}), one has  
\begin{corollary} \label {C49}For $f$ a lift of a tame TC1- form $\omega$ and for any choice $a^o\in o\in \mathcal O_r(f)$
\begin{enumerate}
\item \begin{equation*}
\beta^N_{top, r}( X;\omega)= \sum _{o\in \mathcal O(f)} (\sum_{t\in \mathbb R} \delta^f_r(a^o, t))
\end{equation*}
\item \begin{equation*} 
\begin{aligned}
 \sum _{o\in \mathcal O(f)} \dim H_r(\tilde X^f_{a^o}, \tilde X^f_{<a^o})  =  \sum _{o\in \mathcal O(f)}
 (\sum_{t \in \mathbb R}\delta^f_r(a^o, t) + \sum_{t\in \mathbb R_+} \gamma^f _r(a^o, a^o+t)  + \\
 \sum_{t\in \mathbb R_+} \gamma^f_{r-1}(a^o-t, a^o) +  \lambda^f _{r-1}(a^o))
\end{aligned}
\end{equation*}
\end{enumerate}
\end{corollary}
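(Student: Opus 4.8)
The plan is to derive both identities from the bookkeeping formulas (\ref{E11}), (\ref{E14}) and (\ref{E27}) established in the two previous subsections for an arbitrary weakly tame map. The one subtlety is that the lift $f\colon\tilde X\to\mathbb R$ of a tame TC1-form is never proper once the degree of irrationality is positive, so $\tilde X_a=f^{-1}((-\infty,a])$ is never empty; hence the vanishing $\mathbb T_r(-\infty,b)=0$ --- automatic for a map on a compact space, and exactly what makes Corollary \ref{C48}, and with it (\ref{E27}), clean --- must be extracted from Theorem \ref{T32}. This is the step I expect to require the most care; everything after it is formal.

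So the first step is to check $\mathbb T_r(-\infty,b)=\varprojlim_{b>a\to-\infty}\mathbb T_r(a,b)=0$ for all $r$ and $b$. Since $\varprojlim$ over the directed set $(-\infty,b)$ is left exact and $\mathbb T_r(a,b)=\ker\big(H_r(\tilde X_a)\to H_r(\tilde X_b)\big)$, one gets $\mathbb T_r(-\infty,b)=\ker\big(\varprojlim_{a\to-\infty}H_r(\tilde X_a)\to H_r(\tilde X_b)\big)$. By Theorem \ref{T32} the left vertical arrow of diagram (\ref{D1}) is an isomorphism $\varprojlim_{a\to-\infty}H_r(\tilde X_a)\xrightarrow{\ \sim\ }\mathbb I_{-\infty}(r)$, and commutativity of the left square of (\ref{D1}) identifies the composite $\varprojlim_{a\to-\infty}H_r(\tilde X_a)\to H_r(\tilde X_b)\twoheadrightarrow\mathbb I_b(r)$ with the inclusion $\mathbb I_{-\infty}(r)\hookrightarrow\mathbb I_b(r)$; hence $\varprojlim_{a\to-\infty}H_r(\tilde X_a)\to H_r(\tilde X_b)$ is injective and $\mathbb T_r(-\infty,b)=0$. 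The explicit $V\oplus W$ description in Theorem \ref{T32} shows moreover that the nested images $i(r)\big(\mathbb T_r(a',b)\big)\subseteq\mathbb T_r(<b,b)$ collapse to $0$ for $a'$ far enough below $b$, so the last line of (\ref{E17}) gives $\mathbb T_r((-\infty,b)\times b)=\mathbb T_r(<b,b)=\ker\big(H_r(\tilde X_{<b})\to H_r(\tilde X_b)\big)$; consequently Corollary \ref{C48}, and with it (\ref{E27}), holds verbatim for the lift $f$.

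Item 1 is then immediate: by \S\ref{SS33}, $\beta^N_{top,r}(X;\omega)=\dim_\kappa V$ with $V=\bigoplus_{o\in\mathcal O(f)}\mathbb I^f_{a^o}(r)/\mathbb I^f_{<a^o}(r)$ for any choice of representatives $a^o\in o$ (the summand vanishes at $r$-regular $a^o$, so letting $o$ range over all of $\mathcal O(f)$ is harmless). By (\ref{E11}) the $o$-summand equals $\mathbb F_r(a^o\times\mathbb R)$, whose dimension is $\sum_{t\in\mathbb R}\delta^f_r(a^o,t)$ by (\ref{E14}); summing over $\mathcal O(f)$ gives the asserted formula, and the $\Gamma$-invariance of $\delta^f_r$ recorded above makes each inner sum independent of the representative $a^o$.

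For item 2 I would feed $a=a^o$ into (\ref{E27}),
$$\dim H_r(\tilde X_{a^o},\tilde X_{<a^o})=\sum_{t\in\mathbb R}\delta^f_r(a^o,t)+\sum_{t\in(a^o,\infty)}\gamma^f_r(a^o,t)+\sum_{t\in(-\infty,a^o)}\gamma^f_{r-1}(t,a^o),$$
then substitute $t=a^o+s$ in the first $\gamma$-sum and $t=a^o-s$ in the second to bring the right-hand side to the stated shape. Every term --- the left-hand side via the homeomorphism $\mu_g\colon\tilde X_{a^o}\to\tilde X_{a^o+g}$, the $\delta$- and $\gamma$-sums by their $\Gamma$-invariance --- depends only on the orbit $o$, so summing over the finite set $\mathcal O(f)$ yields the identity. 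The crux, to repeat, is the first step; the remaining manipulations are purely formal.
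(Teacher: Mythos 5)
Your proposal is correct and takes essentially the same route as the paper, which likewise obtains the corollary by combining (\ref{E14}) and (\ref{E27}) with the vanishing $\mathbb T_r(-\infty,b)=\varprojlim_{a\to-\infty}\mathbb T_r(a,b)=0$ deduced from Theorem \ref{T32}. You merely supply details the paper leaves implicit (left-exactness of $\varprojlim$, the collapse of the nested images $i(r)(\mathbb T_r(a',b))$, and the $\Gamma$-invariance making each term depend only on the orbit), all of which check out.
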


In case $\omega$ is a Morse closed differential 1-form  on a closed manifold $M$  and $f:\tilde M\to \mathbb R$ is a  lift of  $\omega$  then $ \sum _{o\in \mathcal O(f)} \dim H_r(\tilde X_{a^o}, \tilde X_{<a^o})$ is exactly the number of zeros of $\omega$  of Morse index $r.$
Indeed, let $\tilde {\mathcal X}_r$ be the set  of critical points of index $r$ and $\mathcal X_r$ be the finite set of zeros of $\omega$ of Morse 
index $r.$ The group $\Gamma$ acts freely on $\tilde {\mathcal X}_r$ and the orbits of this action identify to $\mathcal X_r.$ 
Let $\pi: \tilde {\mathcal X}_r\to \mathcal X_r$ be the quotient map and let $\tilde {\mathcal X}_{r ,a}:= \tilde {\mathcal X}_{r }\cap f^{-1}(a)$ . Note that the restriction of $\pi$ to $\tilde {\mathcal X}_{r ,a}$ is injective.  

Choose for any $o\in \mathcal O(f)$ a critical value $a^o$ and observe that $\cup_{o\in \mathcal O(f)} \tilde {\mathcal X}_{r, a^o}$ identifies by $\pi$ to $\mathcal X_r.$ 
\newline Classical Morse theory identifies the $\kappa-$vector space generated by $\tilde {\mathcal X}_{r,a}$ with the vector space  $H_r(\tilde M_a, \tilde M_{<a})$
and therefore the cardinality of set $\mathcal X_r$ is the dimension of the vector space 
 $\oplus_{o\in \mathcal O(f)} H_r(\tilde M_{a^o}, \tilde M_{<a^o})$  calculated by Corollary \ref{C49} item 2.,
equivalently the rank of the free $\kappa[\Gamma]-$ module 
$\oplus_{a\in CR(f)} H_r(\tilde M_{a}, \tilde M_{<a}).$

\section{ The configurations $\boldsymbol \delta^\omega_r$ and $\boldsymbol\gamma^\omega_r$}

\subsection{The supports of $\delta^f_r$ and $\gamma^f_r$}

In view of Propositions \ref{P43} and \ref {P46},  the support of $\delta^f_r$ and $\gamma ^f_r$ are located on finitely many diagonal $\Delta^\delta_s$  and $\Delta^\gamma_s,$  for a finite collection of values of $s$  ( $s>0$ in the case of $\gamma^f_r$). Here we denote by $\Delta_s:=\{(x,y)\in \mathbb R^2\mid  y-x=s\}.$   
 One way to produce  these diagonals, 
 goes as follows. Since the procedure is the same for $\delta^f_r$ and $\gamma^f_r$ we describe in details only the case of $\delta^f_r.$
 
Choose one $a^o\in o\in \mathcal O_r(f)$ for each orbit $o.$   Each  point in the set   
$$\supp \delta^f_r\cap a_o\times \mathbb R= \{(a^0, b^1_{a^o})   (a^0, b^2_{a^o})\cdots (a^0, b^{n(o)}) \}$$
defines a diagonal  corresponding to   $s=  b^i_{a^o}- a^o,$  so ultimately one obtains a collection of at most $\sum_{o\in \mathcal O_r(f)} n (o)$ such diagonals.  The index $s$ which appears is always a differences of critical values  of the lift $f.$ In view of the equality 
$\delta^f_r(a,b)= \delta^f_r(a+g, b+g)$ the integer $n(o)$ is independent on the choice of $a^o.$ 
Note  that different orbits $o'$ and $o''$  lead to the same diagonal once the equality $b^i_{a^{o'}}- a^{o'}= b^j_{a^{o''}}- a^{o''}$ holds, so the same diagonal can appear multiple times,
different choices of $a^o$ lead to the same diagonals and different lifts of $\omega$ also lead to the same diagonals  with the same number of occurrences.  Similarly one can produce diagonals 
by choosing  $b_o\in  o$ and  considering  the diagonals corresponding to the points $(b_i^{b_o}, b_o),  1\leq i\leq m(o)$ in the set $\supp \delta_r^f\cap \mathbb R\times b_o$,  hence with $s= 
( b_o- a_i^{b_o}).$   Again the integer $m(o)$ depends only on the orbit $o.$  The outcome by both procedures is expected to be the same as argued below. 

To better explain the number of diagonals and the ''correct multiplicity''  associates with each diagonal the following definitions are of help. 
Note  that both cases $\delta^f_r$ and $\gamma^f_r$ are  similar so one treats fin details only the case of $\delta^f_r$ and one  points out the minor notational differences when the case. 
\vskip .1in 

Two points   $(x,y)\in \mathbb R^2$ and $(x',y')\in \mathbb R^2$ are  $\Gamma-$equivalent,  written 
$(x,y)\sim (x',y')$ iff there exists $g\in \Gamma$ s.t. $x= g+x', y=g+y'.$ 

\begin{definition}\label {D}\
 A subset $\mathcal B^f,$ $\mathcal B^f\subset \supp \delta^f_r\subset CR(f)\times CR(f)$ is called a  {\it BASE} for $\supp\  \delta^f_r$ 
 if the following holds:
\begin{enumerate} 
\item  for any $(\alpha, \beta)\in \supp \delta^f_r $  there exists $g\in \Gamma$  and $(a,b)\in B^f$ such that 
$(\alpha, \beta)=(g+a, g+b),$ 
\item If $(a,b)$ and $(a',b')$ are in $\mathcal B^f$ then $(a,b)\sim (a',b')$ implies $(a,b)= (a',b').$
\end{enumerate}
\end{definition}
In view of 2. the pair  $(a,b)$ and $g$ claimed by 1. are unique.
 
 Observe that the following holds:
 \begin{enumerate}
 \item  
 If $\mathcal B^{f_1}_1$ and $\mathcal B^{f_2}_2$ are two bases for the support of $\delta_r^{f_1}$ and $\delta^{f_2}_r,$ $f_1$ and $f_2$ lifts of $\omega,$ 
 then there exists a canonical bijective correspondence $\theta: \mathcal B_1\to \mathcal B_2$ with the property that $\delta^{f_2}(\theta(a,b)) = \delta^{ f_1}(a,b),$ 
 \item  If for any $o\in \mathcal O_r(f)$ one chooses $ a^o \in o$ and 
 let $b_{a^o}^1, b_{a^o}^2, \cdots  b_{a^o}^{n(o)}$ be all  critical values \footnote {finitely many in view of Proposition \ref {P43}} 
 s.t  $\supp \delta^f_r\cap a^o\times R= \{( a^o, b_{a^o}^1), ( a^o, b_{a^o}^2), \cdots ( a^o,  b_{a^o}^{n(o)})\}$ 
 then the finite collection of points $$\cup _{o\in \mathcal O_r(f)}  \{( a^o, b_{a^o}^1), ( a^o, b_{a^o}^2), \cdots ( a^o,  b_{a^o}^{n(o)})\}$$
provides a base for the support of $\delta^f_r.$  
 Denote this base by $\mathcal B^f(\{ a^o\})$ with $\{a^o\}$ the collection of elements $a^o.$ 
\item  If for any $o\in \mathcal O_r(f)$ one chooses $ b_o\in o$ and 
 let $ a^ {b_o}_1,  a^ {b_o}_2, \cdots  a^ {b_o}_{m(o)}$ be all  critical values 
  s.t $\supp \delta^f_r\cap \mathbb R \times b_o
  = \{(a^ {b_o}_1,b_o),  (a^ {b_o}_2, b_o) \cdots  (a^ {b_o}_{m(o)},b_o)\}$ 
then the collection of points $$\cup _{o\in \mathcal O_r(f)}   \{(a^ {b_o}_1,b_o),  (a^ {b_o}_2, b_o) \cdots  (a^ {b_o}_{m(o)},b_o)\} $$
provides a base for the support of $\delta^f_r.$ 
Denote this base by $\mathcal B^f(\{ b_o\}).$

\item Each element $(a,b)\in \mathcal B^f$ of a base  provides a diagonal $\Delta_s$ with $s=b-a$ and each such diagonal $\Delta _s$ appear  as many time as the number of pairs $\{(a,b)\in \mathcal B \mid b-a=s\}.$ It is convenient to assign to $\Delta_s$ the number 
\begin{equation}
\boxed{\boldsymbol \delta_r ^\omega (s):= \sum_{(a,b)\in B,\ b-a=s}\delta^f_r(a,b)} 
\end{equation}  
which, by item 1. above, is independent of the base $\mathcal B^f.$  
\end{enumerate} 
\vskip .2in

The same definition can be made in case of $\gamma^f$  and provide base $\mathcal B^f$ for $\supp\ \gamma^f_r$}. The  same observations, 1., 2., 3., 4.remain  valid when
$\supp \delta^f_r \cap a\times \mathbb R$ and $\supp\ \delta^f_r \cap \mathbb R\times b$ are replaced by $\supp\ \gamma^f_r \cap a\times (a,\infty)$ and $\supp
\ \gamma^f_r \cap (-\infty, b)\times b$ respectively.

The number  assigned to $\Delta_s$ in the case of $\gamma^f_r$ is 
\begin{equation}
\boxed{\boldsymbol \gamma ^\omega_r (s):= \sum_{(a,b)\in \mathcal B^f, \ b-a=s}\gamma^f_r(a,b)}\\  
\end{equation} 

which, by item 1. is independent on the base $\mathcal B^f$ and the lift $f.$  

Given any lift $f$ of a tame $\omega,$  $\boldsymbol \delta _r^\omega (s)$ and $\boldsymbol \gamma_r ^\omega (s)$ can be calculated  using either a base of the type $\mathcal B(\{a^o\})$
or of type $\mathcal B(\{b_o\})$  and one obtains for any choice of a lift $f$ and any choice  $a^o\in o$ or  $b_o\in o,$  $o\in \mathcal O_r(f)$  the following formulae.

\begin{equation}
 \boldsymbol \delta^\omega_r(s)=
 \sum _{o\in \mathcal O_r(f)} \delta^f_r(a^o, a^o +s)= \sum _{o\in \mathcal O_r(f)} 
\delta^f_r(b_o-s, b_o) 
\end{equation}

\begin{equation*}
\begin{aligned}
\boldsymbol \gamma^\omega_r(s)= 
\sum _{o\in \mathcal O_r(f)} \gamma^f_r(a^0, a^0+s)
=  \sum _{o\in \mathcal O_r(f)}\gamma^f_r(b_o-s, b_o)
\end{aligned}
\end{equation*}

The items 1. and 2. in  Corollary \ref {C49} become
\begin{equation}\label {E36} 
\beta^N_{top, r}(X;\omega) = \sum_{s\in \mathbb R} \boldsymbol \delta^\omega_r(s)
\end{equation}

\begin{equation}\label {E37}
  \sum _{o\in \mathcal O(f)} \dim H_r(\tilde X_{a^o}, \tilde X_{<a^o}) =  \sum_{t\in \mathbb R} \boldsymbol \delta^\omega_r(t) + \sum_{t\in \mathbb R_+} \boldsymbol \gamma ^\omega_r(t) + \sum_{t\in \mathbb R_+}  \boldsymbol \gamma^\omega_{r-1}(t)  + \lambda_{r-1}^\omega
  \end{equation}
\vskip .2in 

\subsection {Derivation of the main results} \label{S6}
Proof of Theorem (\ref{TT}):

Item 1. follows from  equality (\ref{E36}) combined with the equality of $\beta^N_{alg,r}(X;\xi(\omega))= \beta^N_{top,r}(X; \omega)$ established by  Theorem \ref{T36} below. 

Item 2. follows from equality (\ref{E37}).

\begin {theorem}\label {T36}
if $\omega$ is a tame TC1-form  then $\beta^N_{top,r}(X;\omega)= \beta^N_{alg,r}(X;\xi(\omega)).$
\end{theorem}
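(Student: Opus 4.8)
The plan is to reduce the statement to an equality of ranks of $\kappa[\Gamma]$-modules and then prove that equality by a leading-term argument together with a passage to the Novikov field. By the definitions of Subsection \ref{SS33} one has $\beta^N_{alg,r}(X;\xi(\omega))=\rank_{\kappa[\Gamma]}\bigl(H_r(\tilde X)/Tor\,H_r(\tilde X)\bigr)=\rank_{\kappa[\Gamma]}H_r(\tilde X)$, the torsion submodule not affecting the rank, while $\beta^N_{top,r}(X;\omega)=\dim_\kappa V=\rank_{\kappa[\Gamma]}\mathbb N\mathbb H_r(f)$ with $V=\oplus_{o\in\mathcal O_r(f)}\mathbb I^f_{a^o}(r)/\mathbb I^f_{<a^o}(r)$. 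So it will suffice to prove $\rank_{\kappa[\Gamma]}H_r(\tilde X)=\dim_\kappa V$. Write $\mathbb I_t:=\mathbb I^f_t(r)$ and $d(o):=\dim_\kappa(\mathbb I_{a^o}/\mathbb I_{<a^o})$, so $\dim_\kappa V=\sum_{o}d(o)$.

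For the inequality $\rank_{\kappa[\Gamma]}H_r(\tilde X)\ge\sum_o d(o)$ I would exhibit $\sum_o d(o)$ elements of $H_r(\tilde X)$ that are $\kappa[\Gamma]$-linearly independent. For each orbit $o$ choose $a^o\in o$ and elements $y^o_1,\dots,y^o_{d(o)}\in\mathbb I_{a^o}$ whose classes form a $\kappa$-basis of $\mathbb I_{a^o}/\mathbb I_{<a^o}$. Suppose $\sum_{o,j}p_{o,j}\,y^o_j=0$ in $H_r(\tilde X)$ with $p_{o,j}=\sum_{g\in\Gamma}c^{o,j}_g\,t^g\in\kappa[\Gamma]$ not all zero, $t^g$ denoting $\langle g\rangle$. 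Let $\tau$ be the largest real of the form $a^o+g$ with $c^{o,j}_g\neq 0$ for some $j$; it is a critical value of $f$, hence lies in a unique orbit $o^*$, and the only triples $(o,j,g)$ with $a^o+g=\tau$ have $o=o^*$, $g=\tau-a^{o^*}$. Projecting the relation to $\mathbb I_\tau/\mathbb I_{<\tau}$ kills each term $\langle g\rangle y^o_j$ with $a^o+g<\tau$ (it lies in $\mathbb I_{a^o+g}\subseteq\mathbb I_{<\tau}$), and there are no terms of higher level, so $\sum_j c^{o^*,j}_{\tau-a^{o^*}}\,[\langle\tau-a^{o^*}\rangle y^{o^*}_j]=0$ in $\mathbb I_\tau/\mathbb I_{<\tau}$. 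Since $\langle\tau-a^{o^*}\rangle$ induces an isomorphism $\mathbb I_{a^{o^*}}/\mathbb I_{<a^{o^*}}\xrightarrow{\ \sim\ }\mathbb I_\tau/\mathbb I_{<\tau}$ carrying the basis $\{[y^{o^*}_j]\}$ to a linearly independent family, all $c^{o^*,j}_{\tau-a^{o^*}}=0$, contradicting the maximality of $\tau$. Hence all $p_{o,j}=0$; equivalently, the $\kappa[\Gamma]$-linear map $\mathbb N\mathbb H_r(f)=V\otimes_\kappa\kappa[\Gamma]\to H_r(\tilde X)$ built from the $y^o_j$ (via a choice of splittings as in Subsection \ref{SS41}) is injective, which gives $\beta^N_{top,r}\le\beta^N_{alg,r}$.

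For the reverse inequality I would pass to the Novikov field $\Lambda$ of formal sums $\sum_{g\in\Gamma}a_g\,t^g$ with support bounded above and finite above every bound. Every nonzero $p\in\kappa[\Gamma]$ is invertible in $\Lambda$: factor out its leading monomial, write the remaining factor as $1-u$ with $u$ of strictly negative degree, and expand $(1-u)^{-1}=\sum_n u^n$, which lies in $\Lambda$ because the supports of the $u^n$ march off to $-\infty$. Thus $\mathrm{Frac}(\kappa[\Gamma])\hookrightarrow\Lambda$, so $\Lambda$ is flat over $\kappa[\Gamma]$ and $\rank_{\kappa[\Gamma]}H_r(\tilde X)=\dim_\Lambda\bigl(H_r(\tilde X)\otimes_{\kappa[\Gamma]}\Lambda\bigr)$. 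Tensoring the injection above with $\Lambda$, it remains to prove that the images of the $y^o_j$ span $H_r(\tilde X)\otimes_{\kappa[\Gamma]}\Lambda$ over $\Lambda$, i.e. that the cokernel of $\mathbb N\mathbb H_r(f)\to H_r(\tilde X)$ is $\kappa[\Gamma]$-torsion. For this I would run a successive-approximation: given $m\in H_r(\tilde X)$, by tameness $m\in\mathbb I_{\ell(m)}$ with $\ell(m)$ a critical value, say $\ell(m)\in o$; the class of $\langle a^o-\ell(m)\rangle m$ in $\mathbb I_{a^o}/\mathbb I_{<a^o}$ equals $\sum_j\mu_j[y^o_j]$, so $m-\sum_j\mu_j\,t^{\ell(m)-a^o}y^o_j\in\mathbb I_{<\ell(m)}$; iterating yields a formal series $\sum_{o,j}\lambda_{o,j}(y^o_j\otimes 1)$, and the remainders tend to an element of $Tor\,H_r(\tilde X)\otimes_{\kappa[\Gamma]}\Lambda=0$, so the series equals $m\otimes 1$.

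The main obstacle is the convergence of this reduction when the degree of irrationality $k$ is $\ge 2$: the critical values are then dense in $\mathbb R$, the successive levels $\ell(m)>\ell(m_1)>\ell(m_2)>\cdots$ produced by the algorithm need not tend to $-\infty$, and a priori the accumulated coefficients $\lambda_{o,j}$ could have exponents accumulating at a finite value, hence lie outside $\Lambda$. One must rule this out, i.e. show that the completion of $H_r(\tilde X)$ with respect to the filtration $\{\mathbb I_t\}$ is Hausdorff modulo torsion; this is exactly where the finite generation (Noetherianity) of $H_r(\tilde X)$ over $\kappa[\Gamma]$ and the finiteness of $\mathcal O_r(f)$ enter. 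A clean way to package it is to view $H_r(\tilde X)\otimes_{\kappa[\Gamma]}\Lambda$ as a finite-dimensional $\Lambda$-vector space carrying an exhaustive $\mathbb R$-indexed filtration by $\Lambda_{\le 0}$-submodules whose successive quotients are the finite-dimensional $\kappa$-spaces $\mathbb I_t/\mathbb I_{<t}$, and to observe that for such a valued vector space the total $\Lambda$-dimension equals the sum of the dimensions of the graded pieces over a set of $\Gamma$-orbit representatives, namely $\sum_o d(o)$. Equivalently, in the language of Sections \ref{S3} one may argue at the level of the barcode decomposition: over $\Lambda$ the finite closed--open bars die and the infinite ones survive, so $\dim_\Lambda\bigl(H_r(\tilde X)\otimes_{\kappa[\Gamma]}\Lambda\bigr)$ counts precisely the infinite bars, which is $\beta^N_{top,r}$ --- the $\kappa[\Gamma]$-analogue of the Usher--Zhang dictionary \cite{UZ}.
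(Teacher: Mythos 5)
Your first inequality is sound and is essentially the argument the paper itself uses for that direction: choosing lifts $y^o_j$ of a basis of each $\mathbb I_{a^o}(r)/\mathbb I_{<a^o}(r)$ and killing a putative relation by projecting onto the graded piece at the maximal occurring level $\tau$ is exactly what the $\Gamma$-compatible splittings together with the injectivity of $^{\mathcal S}I(r)$ (Proposition \ref{P45}) accomplish; it produces a free submodule of rank $\dim_\kappa V$ and hence $\beta^N_{top,r}(X;\omega)\le\beta^N_{alg,r}(X;\xi(\omega))$.

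The reverse inequality is where your proposal has a genuine gap, and it is precisely the one you flag yourself without closing. Passing to the Novikov field $\Lambda$ correctly reduces the problem to showing that the $y^o_j$ span $H_r(\tilde X)\otimes_{\kappa[\Gamma]}\Lambda$, but your successive approximation produces a strictly decreasing sequence of critical values $\ell(m)>\ell(m_1)>\ell(m_2)>\cdots$ which, when the degree of irrationality is at least $2$, may converge to a finite limit because the critical values are dense; the accumulated exponents then violate the Novikov finiteness condition, so the formal series need not define an element of $\Lambda$, and the remainders have no limit in any topology you have introduced. Neither of your proposed repairs works: the assertion that for an $\mathbb R$-filtered $\Lambda$-vector space the total dimension equals the sum of the dimensions of the graded pieces over orbit representatives is essentially a restatement of the theorem rather than a quotable lemma, and the barcode decomposition of $H_r(\tilde X)$ as a $\kappa[\Gamma]$-module (``finite bars die, infinite bars survive'') is not available here --- for $k\ge 2$ the ring $\kappa[\Gamma]$ is not a PID and the paper explicitly notes that the zig-zag/graph-representation machinery does not apply. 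The paper proves this direction with no completion at all: it takes an arbitrary free submodule $\mathcal E\subset H_r(\tilde X)$ of rank $N$ with basis $e_1,\dots,e_N$, uses the properties of the leading-level function $\alpha$ (translating basis elements by elements of $\Gamma$ and performing finitely many eliminations) to arrange that the leading levels $a_i=\alpha(e_{i,j})$ lie in pairwise distinct orbits, and then checks that the leading classes $\hat e_{i,j}$ are linearly independent in $\oplus_a\,\mathbb I_a(r)/\mathbb I_{<a}(r)$, whence $N\le\dim_\kappa V=\beta^N_{top,r}(X;\omega)$. Because this is a finite, purely algebraic manipulation of $N$ elements, the density of critical values causes no difficulty. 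You should replace your spanning argument by such a bound on arbitrary free submodules, or else actually prove that the completion of $H_r(\tilde X)$ along the filtration $\{\mathbb I_t(r)\}$ is Hausdorff modulo torsion --- which is the substantive point your proposal leaves open.
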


Observe that in view of Proposition \ref{P31} for $x\in H_r(\tilde X)$  either there exists $a= \alpha(x) \in \mathbb R$ s.t. $x\in \mathbb I_a(r)\setminus \mathbb I_{<a}(r)$ or $x\in \cap_{t\in \mathbb R} \mathbb I_t(r)= TorH_r(\tilde X).$ 
Clearly $x\in \mathbb I_a(r)\setminus \mathbb I_{<a}(r)$ implies $\hat x,$ the image of $x$ in  $\mathbb I_a(r)/\mathbb I_{<a}(r),$ is not zero.

Observe that: 

\begin{enumerate}\label {F}
\item  $\alpha(x)\in CR(f)_r,$
\item $\alpha(g\cdot x)\equiv  \alpha(\langle g\rangle (x)) = g+\alpha(x),$
\item $\alpha(x+y)= max\{\alpha(x), \alpha(y)\}$ if $\alpha(x)\ne \alpha(y)$ or if $\alpha(x)=\alpha(y)$ and $\hat x+\hat y\ne 0,$ 
\item $\alpha(x+y) <\alpha (x)= \alpha(y)$ if $\alpha(x)= \alpha(y)$ and $\hat x+\hat y=0 $
\end{enumerate}

Suppose that $e_1, e_2, \cdots e_N$  is a base of  a free  $\kappa[\Gamma]-$module $\mathcal E$ submodule of  $H_r(\tilde X).$ 
Note that the multiplication with elements in $\Gamma$ of any of $e_i's$ does not change their status of remaining together a base for $\mathcal E$, but modifies $\alpha(e_i)$ as indicated in (2.) above.  
 In view of the above properties of $\alpha(x)$ one can modify this base  into a base of $\mathcal E$ consisting of  
$$
\begin{aligned}
 e_{1,1}, e_{1,2}, \cdots e_{1,n_1} \\
 e_{2,1}, e_{2,2}, \cdots e_{2,,n_2} \\
 \cdots \\
e_{r,1}, e_{r,2}, \cdots e_{r,n_r}
\end{aligned}
$$
with the  following properties: 
\begin{enumerate}
\item $N= n_1 + n_2 +\cdots n_r,$  
\item  $\alpha (e_{i,j})= a_i \in CR_r(f),$  
\item  $a_1 > a_1> \cdots > a_r$ with $a_i \in o_i$ different orbits of $CR_r(f)/\Gamma= \mathcal O_r(f),$ i.e. $\Gamma-$independent.
\end{enumerate}
First one observes that for any $i=1,2, \cdots r$ the set $\hat e_{i,1},  \hat e_{i,2}, \cdots \hat e_{i,n_i} $ are $\kappa-$linearly independent elements in $\mathbb I_{a_i}/ \mathbb I_{<a_i}.$ 

Indeed if for a fixed $i$  one has  $\sum_j \lambda_j \hat e_{i,j}=0\  \lambda_i\in \kappa,$ then 
$\alpha (\sum_j \lambda_j e_{i,j})<a_i$ by (4), and  then $\sum_j  \lambda_j  e_{i,j} =\sum_j  Q_j  e_{i,j} + \sum_ {j, s\ne i} P_{s,j} e_{s,j}$ where  $Q_j\in \kappa[\Gamma]$ 
contains only negative elements in $\Gamma $  (i.e. in $\Gamma \cap (-\infty,0)). $ Then one obtains $\sum _j  \lambda_j (1- Q_j) e_{i,j} - \sum_ {j, s\ne i}P_{s,j} e_{s,j}=0,$   
hence $\lambda_j (1- Q_j)=0,$ and because $(1- Q_j)\ne 0$ $\lambda_j=0.$ 

Second, In view of $\Gamma-$independence of $a_i$ the entire collection $\{\hat e_{i,j}\}$ consists of elements $\kappa [\Gamma]-$linearly independent in the $\kappa[\Gamma]-$module
$\oplus_{a\in \mathbb R} \mathbb I_a/ \mathbb I_{<a},$ cf subsection 3.3. This implies $N\leq \beta^N_{top,r}(X;\omega)$  hence  $\beta^N_{alg,r} (X;\xi(\omega)) \leq \beta^N_{top, r}(X;\omega).$

The inequality $\beta^N_{top, r}(X;\omega) \leq \beta^N_{alg,r} (X;\xi(\omega)$ follows from the injectivity of $^{\mathcal S} I(r) $ established in Proposition \ref{P45}. 
provided that a $\Gamma-$compatible collection of splittings is chosen and  such collection exists. 
A collection of splittings 
$i_{a,b}(r):\hat \delta^f_r(a,b)\to \mathbb F_r(a,b)\subseteq H_r(\tilde X)$ is $\Gamma-$ compatible if for any $g\in \Gamma$ one has 
\begin{equation}\label {EE} \langle g\rangle \cdot i_{a,b}(r) = i_{a+g, b+g}(r)\cdot \langle g\rangle_{a,b} \end{equation} 
where  the isomorphism $\langle g\rangle_{a,b} :\hat \delta^f_r(a,b)\to \hat \delta^f_r(a+g, b+g)$ is induced by the isomorphism $\langle g\rangle.$  The choice of an arbitrary collection of splittings $i_{a,b}(r)$ for $(a,b)\in B^f$ a base for $\supp\  \delta^f_r,$  which obviously exists, defines via formula (\ref{EE}) a family of compatible $\Gamma-$splittings.  If the collection $\mathcal S$ is $\Gamma-$ compatible then the $\kappa-$linear map $ ^{\mathcal S}I(r)$ is actually an injective  $\kappa[\Gamma]-$ linear map  from the free $\kappa[\Gamma]$ module of rank $\dim \sum_{a,b\in B^f} \delta^f_r(a,b) = \beta^N_{top,r}(X;\omega) $ to $H_r(\tilde X)$, hence  $\beta^N_{top, r}(X;\omega) \leq \beta^N_{alg,r}(X, \xi(\omega)).$
\vskip .2in

Theorems \ref{TP} and \ref {TS} in the generality stated will be proven in part II and part III of this work.
However in case $\omega$ is of degree of irrationality $0,$ hence $\Gamma=0$ and then $\tilde X=X,$ they follow from Theorems 5.2 and 5.3  
in \cite {B} in view of the fact that $\boldsymbol\delta^\omega(t)= \sum_a \delta^f_r(a, a+t).$ 
In case $\omega$ is of degree of irrationality $1$ and the TC1-form determined by an angle valued map $ f: X\to \mathbb S^1= \mathbb R/2\pi \mathbb Z$  then they follow  from 
\cite {B}   
Theorems  5.5, 5.6, 6.3 and 6.4  by observing that $\boldsymbol \delta ^\omega_r(t)=\sum _{z\in \mathbb C\setminus 0\mid \ln  |z|= t} \delta^f_r(z)$ and $\boldsymbol \gamma ^\omega_r(t)= 
\sum _{z\in \mathbb C\setminus 0 | \ln  |z|= t, |z|>1}\delta^f_r(z).$ If $\omega$ is of degree if irrationality one then  $\Gamma\simeq Z$ with the 
positive generator, a  real number $l\in \mathbb R_+.$ One repeats the  arguments in \cite{B} with $2\pi$ replaced by $l$ and one derives the two 
results from the same theorems in \cite{B}. Reference \cite{B} actually reproduces results in \cite {BH}.

\section {Appendix.1}

{\bf Proof of Proposition \ref{P447}}

Consider commutative  diagrams of $\kappa-$vector spaces $$
\mathbb D= \xymatrix { E_2\ar[r]^{i_2} &F_2\\
E_1\ar[u]^{j_E}\ar[ru]^u\ar [r]^{i_1}&F_1\ar[u]^{j_F}}$$
which satisfies the following three properties
\begin{enumerate}

\item $j_E$ and $j_F$ are injective
\item $j_E: \ker i_1 \to \ker i_2$ is an isomorphism
\item $\img(u) = \img i_2 \cap \img j_F= \img u$
\end{enumerate}
and define $T(\mathcal B):= F_2/ i_2(E_2) + j_F(F_1)$ 

Consider  the diagram  

 \begin{equation}\label {E211}
\xymatrix{A_2\ar[r]^{i^A_2}\ar@/^2pc/[rr]_{i_2}&B_2\ar[r]^{i^B_2}&C_2\\
A_1\ar[u]_{j_A}\ar@/_2pc/[rr]^{i_1}\ar[ur] \ar[r]_{i^A_1} \
\ar[urr]& B_1\ar[u]_{j_B}\ar[ur]\ar[r]_{i^B_1}&C_1\ar[u]_{j_C}}
\end{equation}
and  observe that 

$\bf O1:$

If  each of the three  diagrams $\mathbb B_1, \mathbb B_2, \mathbb B,$ associated with  (\ref{E211}),     

$\mathbb B_1$ with vertices $A_1, A_2, B_1, B_2$,

$\mathbb B_2$ with vertices $B_1, B_2, C_1, C_2$, and 

$\mathbb B$ with vertices $A_1, A_2, C_1, C_2$

\noindent satisfy the properties (1) (2) (3) above 
then (\ref{E211})  induces  the  exact sequence  

\begin{equation*}
\xymatrix@C-5pc { B_2/ (i^A_2(A_2)+ j_B(B_1))= \mathbb T(\mathbb B_1)\ar[rd]^{i}&&  \\
0\ar[u]& \mathbb T(\mathbb B)= C_2/ (i_2(A_2)+ j_C(C_1)) \ar[rd]^{p}&0\\ 
&&C_2/ (i^B_2 (B_2)+ j_C(C_1))= \mathbb T(\mathbb B_2)\ar[u] }
\end{equation*}
with  $i$ induced  by $i^B_2,$  
(well defined because $\img ( i^B_2\cdot j_B)\subseteq \img j_C$)
 $p$  induced by the inclusion $(i_2(A_2)+ j_C(C_1)) \subseteq (i^B_2 (B_2)+ j_C(C_1)).$

\vskip .1in 
Clearly  $p$ is surjective and $p\cdot i=0.$  Property (3) 
implies $i$ injective.  Properties (1), (2) (3) imply that  the sequence is exact.

Similarly consider the diagram 
\begin{equation}\label {E212}
\xymatrix {A_3\ar[r]^{i_3}& B_3\\ 
A_2\ar[r]^{i_2}\ar[ru]\ar[u]^{j_2^A}&B_2\ar[u]_{j_2^B}\\
A_1\ar@/^2pc/[uu]^{j^A}\ar[u]^{j_1^A}\ar[ur]\ar[ruu]\ar[r]^{i_1}&B_1\ar@/_2pc/[uu]_{j^B}\ar[u]_{j_1^B}
}
\end{equation}
and observe by the same arguments that 

$\bf O2:$   

If each of the three  diagrams $\mathbb B_1, \mathbb B_2, \mathbb B,$  associated with  (\ref{E212}),    

 $\mathbb B_1$ with vertices $A_2, A_3, B_2, B_3$,

$\mathbb B_2$ with vertices $A_1, A_2, B_1, B_2$, and 

$\mathbb B$ with vertices $A_1, A_3, B_1, B_2$

\noindent satisfy the properties (1) (2) (3) of the diagram $\mathbb D$ 
Then (\ref{E212})  induces  the  exact sequence  
\begin{equation*}
\xymatrix @C-5pc{B_2/ (i_2(A_2)+ j^B_1(B_1))=\mathbb T(\mathbb B_1)\ar[rd]^{j}&&  \\
0\ar[u]& \mathbb T(\mathbb B)= B_3/ (i_3(A_3)+ j^B(B_1)) \ar[rd]^{p}&0\\ 
&&B_3/ (i_3 (A_3)+ j_2^B(B_2))= \mathbb T(\mathbb B_2)\ar[u].}
\end{equation*}

Note that any ad-box $B= (a'a]\times(b',b]$ $ < a' <a <b\leq b' <b $  defines a diagram $D$ as above with $E_2= \mathbb T_r(a',b), F_2= \mathbb T_r(a,b), 
E_1= \mathbb T_r(a',b'), F_1= \mathbb T_r(a,b')$ and $i_1,i_2, j_E, j_F$ the induced linear maps.
 The ad-boxes $B_{12}, B_{22}, B{\cdot 2}$, $B_{11}, B_{21}, B_{\cdot 1}$ and $B_{1,\cdot}, B_{2,\cdot}, B$  are in the situation provided by $\bf O 1$, and the boxes  
$B_{11}, B_{12}, B{1,\cdot }$, $B_{21}, B_{22}, B_{2,\cdot }$ and $B_{\cdot 1}, B_{\cdot 2}, B$ in the situation provided by $\bf O 2$.
Consequently Proposition 5 follows.

\end{document}